\newtheorem{theorem}{Theorem}[section]
\newtheorem{lemma}[theorem]{Lemma}
\newtheorem{proposition}[theorem]{Proposition}
\newtheorem{corollary}[theorem]{Corollary}
\newtheorem{definition}{Definition}[section]
\theoremstyle{remark}
\newtheorem{remark}[theorem]{Remark}
\theoremstyle{definition}
\numberwithin{equation}{section}
\newcommand{\R}{\ensuremath{\mathbb{R}}}
\newcommand{\N}{\ensuremath{\mathbb{N}}}
\newcommand{\Z}{\ensuremath{\mathbb{Z}}}
\newcommand{\J}{\ensuremath{\mathbb{J}}}
\newcommand{\vertiii}[1]{{\left\vert\kern-0.25ex\left\vert\kern-0.25ex\left\vert #1 
    \right\vert\kern-0.25ex\right\vert\kern-0.25ex\right\vert}}
\newcommand{\Levy}{\ensuremath{\mathcal{L}}}
\newcommand{\Operator}{\ensuremath{\mathfrak{L}^{\sigma,\mu}}}
\newcommand{\Levymu}{\ensuremath{\mathcal{L}^\mu}}
\newcommand{\dd}{\,\mathrm{d}}
\newcommand{\dell}{\partial}
\newcommand{\indikator}{\mathbf{1}_{|z|\leq 1}}
\newcommand{\indik}{\mathbf{1}}
\newcommand{\e}{\text{e}}
\DeclareMathOperator{\sgn}{\textup{sign}}
\newcommand{\Grid}{\mathcal{G}_h}
\newcommand{\GridT}{\mathcal{T}_{\Delta t}^T}
\begin{document}

\title[Numerical analysis of equations of porous medium type]{Robust
  numerical methods for nonlocal \\(and local) equations of porous medium
  type. \\Part II: Schemes and experiments}

\author[F. del Teso]{F\'elix del Teso}
\address[F. del Teso]{Basque Center for Applied Mathematics (BCAM)\\
Bilbao, Spain} 
\email[]{fdelteso\@@{}bcamath.org}
\urladdr{http://www.bcamath.org/es/people/fdelteso}

\author[J. Endal]{J\o rgen Endal}
\address[J. Endal]{Department of Mathematical Sciences\\
Norwegian University of Science and Technology (NTNU)\\
N-7491 Trondheim, Norway} 
\email[]{jorgen.endal\@@{}ntnu.no}
\urladdr{http://folk.ntnu.no/jorgeen}

\author[E. ~R.~Jakobsen]{Espen R. Jakobsen}
\address[E. R. Jakobsen]{Department of Mathematical Sciences\\
Norwegian University of Science and Technology (NTNU)\\
N-7491 Trondheim, Norway} 
\email[]{espen.jakobsen\@@{}ntnu.no}
\urladdr{http://www.math.ntnu.no/\~{}erj/}

\keywords{Fully discrete, numerical schemes, convergence, uniqueness, distributional solutions, nonlinear degenerate diffusion, porous medium equation, fast diffusion equation, Stefan problem, fractional Laplacian, Laplacian, nonlocal operators, existence, a priori estimates}

\subjclass[2010]{35A02, 35B30, 35K65, 35D30, 35K65, 35R09, 35R11, 65R20, 76S05}

\begin{abstract}
\noindent We develop a  unified and easy to use framework to study robust fully discrete numerical methods for nonlinear
degenerate diffusion equations
$$
\partial_t u-\mathfrak{L}[\varphi(u)]=f(x,t) \qquad\text{in}\qquad \R^N\times(0,T),
$$ 
where $\mathfrak{L}$ is a general symmetric  L\'evy 
type diffusion operator. Included are both local and nonlocal
problems with e.g. $\mathfrak{L}=\Delta$ or $\mathfrak{L}=-(-\Delta)^{\frac\alpha2}$, $\alpha\in(0,2)$, and porous
medium, fast diffusion, and Stefan type nonlinearities $\varphi$. By
robust methods we mean that they converge even for nonsmooth solutions
and under very weak assumptions on the data.
We show that they are $L^p$-stable for
$p\in[1,\infty]$, compact, and convergent in
$C([0,T];L_{\textup{loc}}^p(\R^N))$ for $p\in[1,\infty)$. The first
  part of this project is given in 
\cite{DTEnJa18a} and contains the unified and easy to use theoretical
framework. This paper is devoted
to schemes and testing. We study many different problems
and many different concrete discretizations, proving that  
the results of \cite{DTEnJa18a} apply and testing the 
schemes numerically. Our examples include fractional diffusions of
different orders, and Stefan problems, porous medium, and fast
diffusion nonlinearities. Most of the convergence
results and many schemes are completely new for nonlocal
versions of the equation, including results on high order methods, the
powers of the discrete Laplacian method, and discretizations of fast
diffusions. Some of the results and schemes are new even for linear
and 
local problems.
\end{abstract}

\maketitle

%\newpage
%\small
\tableofcontents
%\newpage

\section{Introduction}
We develop a unified and easy to use framework for
fully discrete monotone numerical methods of finite difference type
for a large class of 
possibly degenerate nonlinear diffusion equations
of porous medium type:
\begin{align}\label{E}
\dell_t
u-\Operator[\varphi(u)]&=f(x,t) &&\text{in}\qquad Q_T:=\R^N\times(0,T),\\
u(x,0)&=u_0(x) &&\text{on}\qquad \R^N \label{IC},
\end{align}
where $u$ is the solution, $\varphi$ 
continuous and nondecreasing, %% $f$ some right-hand side,
and $T>0$. The diffusion operator $\Operator$ is given~as  
\begin{equation}\label{defbothOp}
\Operator:=L^\sigma+\Levymu
\end{equation} 
with local and nonlocal (anomalous) parts, 
\begin{align}\label{deflocaldiffusion}
  L^\sigma[\psi](x)&:=\text{tr}\big(\sigma\sigma^TD^2\psi(x)\big),\\
\label{deflevy}
\Levy ^\mu [\psi](x)&:=\int_{\R^N\setminus\{0\} } \big(\psi(x+z)-\psi(x)-z\cdot D\psi(x)\indikator\big) \dd\mu(z),
\end{align}
%for
where  $\sigma=(\sigma_1,....,\sigma_P)\in\R^{N\times P}$ for $P\in \N$
and $\sigma_i\in \R^N$, $D$ and $D^2$ are the gradient and Hessian,
$\indikator$ is a characteristic function, and $\mu$ is a nonnegative
symmetric measure.

\begin{remark}\label{deflevyexplained}
By symmetry of $\mu$, $\lim_{r\to0^+}\int_{r<|z|\leq 1}z\dd\mu(z)=0$,
and we have an equivalent definition of $\Levy^\mu$ in \eqref{deflevy}
in terms of a principal value integral:
$$
\Levy^\mu[\psi](x)=\lim_{r\to0^+}\int_{|z|>r}\big(\psi(x+z)-\psi(x)\big)\dd\mu(z)=\textup{P.V.}\int_{|z|>0}\big(\psi(x+z)-\psi(x)\big)\dd\mu(z).
$$
\end{remark}

The assumptions we impose on $\Operator$ and $\varphi$ are so mild
that many different problems can be modelled by \eqref{E}: Flow in
porous media, nonlinear heat transfer, phase transitions,
and population dynamics -- see e.g. \cite{Vaz07} for
local problems 
and \cite{Woy01,MeKl04,Caf12,Vaz12} for nonlocal problems. Important examples are strongly degenerate Stefan problems with
$\varphi(u)=\max(0,au-b)$, $a\geq0$, and the full range of porous 
medium and fast diffusion equations with $\varphi(u)=u|u|^{m-1}$ for
any $m\geq0$. The class of diffusion operators  $\Operator$ coincides 
with the generators of the {\em symmetric} L\'evy processes  \cite{Ber96,Sch03,App09} and
includes e.g. the Laplacian $\Delta$, fractional Laplacians $-(-\Delta)^{\frac{\alpha}{2}}$, $\alpha\in(0,2)$,
 relativistic Schr\"odinger operators $m^\alpha 
 I-(m^2I-\Delta)^{\frac{\alpha}{2}}$, and even
 discretizations of these.
 Since $\sigma$ and
 $\mu$ may be degenerate or even identically zero, problem \eqref{E}
 can be purely nonlocal, purely local, or a combination. An additional challenge
 both analytically and numerically is the fact that solutions of
 \eqref{E} in general can be very irregular and even discontinuous.

 The numerical schemes will be defined on a grid $(x_\beta,t_j)$ as follows,
\begin{equation}\label{S}
\begin{split}
U_\beta^j=U_\beta^{j-1}+\Delta t_j\big(\Levy_1^h[\varphi_1^h(U_{\cdot}^j)]_\beta+\Levy_2^h[\varphi_2^h(U_\cdot^{j-1})]_\beta+F^j_\beta\big),
\end{split}
\end{equation}
where $U_\beta^j\approx u(x_\beta,t_j)$, $\Levy^h_1+\Levy^h_2 \approx
\Operator$, $\varphi^h_i\approx \varphi$, $F_\beta^j\approx
f(x_\beta,t_j)$  and $h$ and $\Delta t_j$ are the discretization
parameters,  and the
discrete diffusion operators $\Levy^h_i$ have a monotone difference representation 
\begin{equation*}%\label{eq:GenDiscOp}
\Levy^h_i[\psi](x)=\sum_{\beta\neq0}\left(\psi(x+z_{\beta})-\psi(x)\right)\omega_{i,\beta}
\quad \text{for}\quad\omega_{i,\beta}\geq0.
\end{equation*}
As we will see, different
choices of $\varphi^h_1, \varphi^h_2,\Levy^h_1,
\Levy^h_2$ lead to explicit, implicit, $\theta$-methods, and various explicit-implicit methods. In a simple one
dimensional case,
\begin{align*}
  \dell_t u&=\varphi(u)_{xx}-(-\partial_x^2)^{\alpha/2}\varphi(u),
\end{align*}
an example of a discretization in our class is given by
\begin{align*}
U^j_m=U^{j-1}_m&+\frac{\Delta
  t}{h^2}\Big(\varphi(U^{j}_{m+1})-2\varphi(U^{j}_{m})+\varphi(U^{j}_{m-1})\Big)\\
&+\Delta
    t\sum_{k\neq0}\Big(\varphi(U^{j-1}_{m+k})-\varphi(U^{j-1}_{m})\Big)\int_{(k-\frac12)h}^{(k+\frac12)h}\frac{c_{N,\alpha}\dd z}{|z|^{N+\alpha}}.
\end{align*}

The main result of the first part of this project \cite{DTEnJa18a} was a
unified, rigorous, and easy to use theoretical framework for  these schemes. This
novel analysis includes well-posedness, $L^p$-stability, equicontinuity,
compactness, and $L^p_{\textup{loc}}$-convergence results. These
results are very general since they hold for local and 
nonlocal, linear and nonlinear, non-degenerate and degenerate, and
smooth and nonsmooth problems. An important new idea is to work in a
sufficiently  general class of  solutions of \eqref{E} that allows for atomic
 (non-absolutely continuous) measures $\mu$ in the definition of
 $\Operator$. Since the discrete operator
 $\Levy^h$ is a nonlocal operator $\Levy^\nu$ with $\nu:=\sum_{\beta\not=0}
 (\delta_{z_\beta}+\delta_{z_{-\beta}})\omega_{\beta}$, it is in the form of
 $\Operator$ and can be analyzed with the same powerful PDE
techniques. This analysis requires recent uniqueness results for \eqref{E}
obtained by the authors in \cite{DTEnJa17a,DTEnJa17c} --  results for bounded
distributional solutions or very weak solutions of \eqref{E} in the
generality needed here. The fact that we
can use such a weak notion of solution both simplifies the analysis and
makes a global theory for all the different problems
and schemes we consider here possible. At this point the reader should
note that if \eqref{E} has more regular (bounded) solutions (weak,
strong, mild, or classical), then our results still apply because
these solutions will coincide with our (unique) distributional
solution.

Schemes that converge in such general circumstances are often said to be {\em
  robust}. Consistent numerical schemes are not robust in general,
i.e. they need not always converge, or can even converge to false
solutions. Such issues are seen especially in nonlinear, degenerate and/or
low regularity problems. Our 
general results are therefore only possible because we have (i) 
identified a class of schemes with good properties (including monotonicity)
and (ii) developed the new mathematical techniques needed to analyse
these schemes in the current generality.

In this paper, which is the second part of this project, we have two main
objectives: (1) to give many concrete discretizations that fall
into the theoretical framework of the first part \cite{DTEnJa18a}, and
(2) to test and verify numerically a number of these schemes for
a wide and representative number of examples of problems of the form
\eqref{E}.

The scheme \eqref{S} is
essentially determined as soon as we specify $\Levy_i^h$ and
$\varphi^h_i$, the discretizations of $\Operator$ and $\varphi$. The
whole of Section \ref{sec:discspa} is devoted to such concrete 
discretizations. We start by splitting the diffusion operator
$\Operator$ into local, singular nonlocal, and bounded nonlocal parts, and
then explain how these parts can be discretized separately. For the
local part, we consider classical finite difference methods and in
this context new semi-Lagrangian methods. For the singular nonlocal
part, we analyse the trivial discretization and the adapted vanishing
viscosity approximation, and finally, for the bounded nonlocal part,
we consider quadrature methods obtained from interpolation in two
different ways. For the first time we apply the
so-called powers of the discrete Laplacian method (when
$\Operator=-(-\Delta)^\alpha$) to diffusion problems, and we explain how non-Lipschitz
(including fast diffusion) nonlinearities $\varphi$ 
have to be approximated to get good explicit schemes.

In every case we check that the discretizations satisfy the
conditions of the theoretical framework of \cite{DTEnJa18a}, and hence we
prove that schemes \eqref{S} involving these discretization are
$L^p$-stable for $p\in[1,\infty]$ and $L^p_{\textup{loc}}$-convergent
for $p\in[1,\infty)$. We also compute the local truncation errors, and we
  explain how to combine the methods to get better than first order
  methods for problems involving  fractional Laplace like operators and very
high order methods for bounded nonlocal operators. The powers of the
discrete Laplacian method is shown to be an order $2$ method
regardless of the value of $\alpha\in(0,2)$. Many of these schemes and
most of the convergence results are new in this context, sometimes
even in the linear case. This is especially the case for nonlocal
problems. Some important examples here are:
\begin{enumerate}[(i)]
\item the first high 
order methods for nonlinear nonlocal diffusions of porous medium type
(but see also \cite{DrJa14}), 
\smallskip
\item the first time the powers of discrete Laplacian method is applied to
  nonlinear problems, and
  \smallskip
\item the first numerical methods and
simulations for nonlocal problems with non-Lipschitz (``fast diffusion'')
nonlinearities.
\end{enumerate}
We also mention that our results provide a rigorous
justification for the numerical simulations of Section 7 in
\cite{BrChQu12} for a nonlocal Stefan problem with discontinuous
solutions, see Remark \ref{Stefan} for more details.

Numerical tests are presented in Section \ref{sec:numsim}--\ref{sec:numsim2D}.  We focus on
nonlocal problems since there are much less results for such problems
in the literature, especially for porous medium type equations. For
simplicity, we take the diffusion operator $\Operator$ to be the
fractional Laplacian $-(-\Delta)^{\frac\alpha2}$  in Section \ref{sec:numsim}.  
All the well-known one dimensional  special cases of \eqref{E} are then considered:
the linear fractional heat equation, the fractional porous medium
equation \cite{DPQuRoVa12}, and fractional equations with fast diffusion and
Stefan type nonlinearities \cite{DPQuRoVa12,BrChQu12,A-VMaRoT-M10}. In each problem we test and compare four different
numerical schemes for different powers $\alpha$ of the
fractional Laplacian. Most test problems are set up to have smooth
exact solutions, and the numerical tests confirm the
theoretical results,  in most
cases also including the truncation error bounds and the expected
convergence rates. 

Note that in the Stefan case, we expect that $\varphi(u)\in C^\gamma$ for some $\gamma\in(0,1]$, but this is not enough to ensure that $-(-\Delta)^{\frac{\alpha}{2}}[\varphi(u)]$ exists pointwise when $\alpha\geq \gamma$.
If this is the case, the scheme does not converge in $L^\infty$, but
it still converges in $L^1$ with the expected rates given by the local
truncation error. See Section \ref{sec:StefanTyepProblemExplicit} for the details.
 
 We also produce a Stefan type
example where the numerical solution is nondifferentiable. Finally, in
Section \ref{sec:numsim2D} we test a much more complicated problem in
two dimensions: A Stefan problem with degenerate local and
nonlocal diffusion and nonsmooth castle like initial data. 
 
To perform the numerical computations mentioned above, we have
restricted the scheme to a (large) bounded domain and set the numerical solution
equal to zero outside. Convergence of the scheme then requires the size
of the computational domain to increase as the grid is refined. We
briefly discuss the error introduced by the restriction to a bounded domain in Section~\ref{sec:TruncationDomainEffect}.

\smallskip
\noindent {\bf Related work.}
In the  local linear case, when $\varphi(u)=u$ and $\mu\equiv0$ in
\eqref{E}, numerical methods and analysis can be found in
undergraduate text books. In the nonlinear case there is a very large
literature so we will focus only on some developments that are more
relevant to this paper. For porous medium nonlinearities
($\varphi(u)=u|u|^{m-1}$ with $m>1$), there are early results on
finite element and finite-difference interface tracking methods in
\cite{Ros83} and \cite{DBHo84} (see also \cite{Mon16}). There is
extensive theory for finite volume schemes, see \cite[Section
  4]{EyGaHe00} and references therein for equations with locally
Lipschitz $\varphi$. For finite element methods there is a number of
results, including results for fast diffusions ($m\in(0,1)$), Stefan
problems, convergence for strong and weak solutions, discontinuous
Galerkin methods, see
e.g. \cite{RuWa96, EbLi08,EmSi12, DuAlAn13,ZhWu09,NoVe88,MiSaSu05}. 
Note that the latter paper considers the general form of \eqref{E}
with $\Operator=\Delta$ and provides a convergence analysis in $L^1$. 
A number of results on finite
difference methods for degenerate convection-diffusion equations also yield
results for \eqref{E} in special cases, see e.g. \cite{EvKa00,BuCoSe06,KaRiSt16,JePa17}. 
In particular the results of \cite{EvKa00,KaRiSt16} imply our
convergence results for a particular scheme when $\varphi$ is
locally Lipschitz,  $\Operator=\Delta$, and solutions have a certain
additional BV regularity. Finally, we mention very
general results on so-called gradient schemes \cite{DrEyGaHe13,DrEyHe16} for
doubly or triply degenerate parabolic equations. This class of
equations include local porous medium type equations as a special case.

In the nonlocal case, the literature is more recent and not so
extensive. For the linear case we refer somewhat arbitrarily to \cite{CoTa04,HuOb14, HuOb16, NoOtSa15}  and references therein. Here we also
mention \cite{CuDTG-GPa17a} and its novel finite element plus semigroup
subordination approach to
discretizing $\Operator=-(-\Delta)^{\frac{\alpha}{2}}$.
Some early
results for nonlocal problems came from  finite difference quadrature
schemes for Bellman equations and fractional conservation laws, see
\cite{JaKaLC08,CaJa09,BiJaKa10} and \cite{Dro10}. For the latter case
discontinuous Galerkin and spectral methods were later studied in
\cite{CiJaKa11,CiJa13,XuHe14}. The first results that include
nonlinear nonlocal versions of \eqref{E} was
probably given in \cite{CiJa11}. There,  convergence of finite
difference quadrature schemes was proven for a convection-diffusion
equation. This result is extended to more general equations and error
estimates in \cite{CiJa14} and a higher order discretization in
\cite{DrJa14}. In some cases our convergence results follow
from these 
results (for two particular schemes, $\sigma=0$, and $\varphi$ locally
Lipschitz). However, the analysis there is different and more 
complicated since it involves entropy solutions and Kru\v{z}kov doubling
of variables arguments.

In the purely parabolic case \eqref{E}, the behaviour of the solutions
and the underlying theory is different from the convection-diffusion
case (especially so in the nonlocal case, see e.g. \cite{DPQuRoVa11, DPQuRoVa12, Vaz14,DPQuRo16,DPQuRoVa17} and \cite{DrIm06, ChCzSi10, AlAn10,CiJa11,AlCiJa14,IgSt18}). It is therefore important to develop numerical methods and
analysis that are specific for this setting. The first (nonlocal)
results in this direction seems to be  \cite{DTe14, DTVa14}. These papers are based on the extension method
\cite{CaSi07}, and introduce and analyze finite difference methods for the Fractional Porous Medium
Equation. The present work is possibly the
first not to use the extension method or the regularity of the solution.

\smallskip

\noindent{\bf Outline.} The next section is a short section where we
collect the assumptions and well-posedness results for the porous
medium type equation \eqref{E}. In section \ref{sec:prevRes} we
formulate the numerical schemes and state the main theoretical
results. This is a slightly 
simplified version of the theoretical framework of part 1 of this project
\cite{DTEnJa18a}. We also give a couple of new results that will greatly
simplify the verification of the assumptions of this framework. The
main contributions of this paper are then given in the two sections
that follows. In Section \ref{sec:discspa} we introduce the
concrete 
discretizations and prove rigorously that they fall into our
theoretical framework, while in Sections \ref{sec:numsim}--\ref{sec:numsim2D}  we present
our numerical simulations for all the well-known special cases of \eqref{E}.

\section{Preliminaries}
\label{sec:assumptionsprelim}

In this section we present the assumptions and well-posedness results
for the initial value problem \eqref{E}--\eqref{IC}. 
In this paper we work in the setting of bounded distributional
solutions. This is very convenient for numerical analysis 
since it leads to an easy to work with convergence theory that applies
even to very bad problems. By uniqueness it also applies to situations
where solutions are more regular, e.g. classical, strong, weak/energy, or mild
solutions.

Following \cite{DTEnJa17c} (see also \cite{BrCr79,DTEnJa17a}) we use the assumptions:
\begin{align}
&\varphi:\R\to\R\text{ is nondecreasing and continuous};
\tag{$\textup{A}_\varphi$}&
\label{phias}\\
&f\text{ is measurable and}  \int_0^T\|f(\cdot,t)\|_{L^1(\R^N)}+\|f(\cdot,t)\|_{L^\infty(\R^N)}\dd t<\infty ;
\label{gas} \tag{$\textup{A}_f$} \\
&u_0\in L^1(\R^N)\cap L^{\infty}(\mathbb{R}^N);\text{ and}
\tag{$\textup{A}_{u_0}$}&
\label{u_0as}\\
&\label{muas}\tag{$\textup{A}_{\mu}$} \mu \text{ is a nonnegative symmetric Radon measure on
}\R^N\setminus\{0\}
\text{ satisfying}
\nonumber\\ 
&\quad\int_{|z|\leq1}|z|^2\dd \mu(z)+\int_{|z|>1}1\dd
\mu(z)<\infty\nonumber.
\end{align}
Sometimes we will need stronger assumptions than \eqref{phias} and
\eqref{muas}:
\begin{align}
&\label{philipas}\tag{$\textup{Lip}_\varphi$} \varphi:\R\to\R\text{ is
    nondecreasing and locally Lipschitz.}\\
&\label{alp}\tag{$\textup{A}_{\mu_\alpha}$} \text{There are constants
    $\alpha\in(0,2)$ and $C\geq0$ such that for all $r\in(0,1)$,}\\
 &\nonumber \int_{|z|<r}|z|^k\dd\mu(z)\leq
  Cr^{k-\alpha},\ k=2, 3, 4,\quad\text{and}\quad\int_{r<|z|<1}\dd\mu(z)\leq
  Cr^{-\alpha}.\\ 
&\label{nuas}\tag{$\textup{A}_{\nu}$} \nu \text{ is a nonnegative symmetric Radon measure satisfying } \nu(\R^N)<\infty. \nonumber
\end{align}
\begin{remark}\label{addingconstants}
\begin{enumerate}[{\rm (a)}]
\item
  Without loss of generality, we can assume $\varphi(0) = 0$
  (replace $\varphi(u)$ by $\varphi(u)-\varphi(0)$), and when
  \eqref{philipas} holds, that $\varphi$ is globally Lipschitz (since
  $u$ is bounded). In the latter case we let $L_\varphi$ denote the
  Lipschitz constant.
  \smallskip
\item Under assumption \eqref{muas}, for any $p\in[1,\infty]$ and any
  $\psi\in C_\textup{c}^\infty(\R^N)$,
\begin{equation}\label{eq:boundOp}\|\Operator[\psi]\|_{L^p}\leq c\|D^2\psi\|_{L^p}
  \Big(|\sigma|^2+\int_{|z|\leq1}|z|^2\dd\mu(z)\Big)+ 2\|\psi\|_{L^p}
  \int_{|z|>1}\dd\mu(z).\end{equation}
\item When $\mu$ is absolutely  continuous w.r.t. the Lebesgue measure
  $\dd z$, assumption \eqref{alp} means
  that $\dd \mu(z)\leq \frac{C}{|z|^{N+\alpha}}\dd z$ for
  $|z|<1$.
  The nonlocal operator $\Levy^\mu$ then typically would be a fractional differential
  operator of order $\alpha\in(0,2)$ (a pseudo differential operator ), like e.g. the fractional Laplacian
  $(-\Delta)^{\frac\alpha2}$.
  \smallskip
\item Assumption \eqref{gas} is equivalent to requiring 
$
f\in L^1(0,T;L^1(\R^N)\cap L^\infty(\R^N))
$, an iterated $L^{p}$-space as in e.g.
\cite{BePa61}. Note that $L^1(0,T;L^1(\R^N))=L^1(Q_T)$.
\end{enumerate}
\end{remark}

\begin{definition}[Distributional solution]\label{distsol} 
Let  $u_0\in L_\textup{loc}^1(\R^N)$ and  $f\in
L_\textup{loc}^1(Q_T)$. Then $u\in L_\textup{loc}^1(Q_T)$ is a
distributional (or very weak) solution of \eqref{E} if for all $\psi\in
C_\textup{c}^\infty(\R^N\times[0,T))$, $\varphi(u)\Operator[\psi]\in
L^1(Q_T)$ and
\begin{align}\label{def_eq}
\int_{0}^{T}\int_{\R^N} \big(u\dell_t\psi+ \varphi(u)\Operator[\psi]+f\psi\big)\dd x \dd t+\int_{\R^N}u_0(x)\psi(x,0)\dd x=0
\end{align}
\end{definition}
By Remark \ref{addingconstants} (b), $\varphi(u)\Operator[\psi]\in
L^1$ if e.g.~$u\in L^\infty$ and $\varphi$ continuous. Distributional solutions exist  and are unique in $L^1\cap L^\infty$.
\begin{theorem}[Theorem 2.8 in \cite{DTEnJa17a} and Theorem 3.1 \cite{DTEnJa17c}]\label{unique}
 Assume \eqref{phias}, \eqref{gas}, \eqref{u_0as}, and
\eqref{muas}. Then there exists a unique distributional solution $u$
of \eqref{E}-\eqref{IC} such that 
\begin{equation}\label{eq:regsol}
u\in L^1(Q_T)\cap L^\infty(Q_T)\cap C([0,T];L_\textup{loc}^1(\R^N)).
\end{equation}
 \end{theorem}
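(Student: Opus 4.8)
The plan is to split the statement into an existence part and a uniqueness part, working throughout under the normalization $\varphi(0)=0$ permitted by Remark \ref{addingconstants}(a). For existence I would construct the solution by a double approximation that removes both degeneracies of the problem at once. Replace $\varphi$ by the strictly increasing Lipschitz perturbation $\varphi_\veps(s)=\varphi(s)+\veps s$, regularize the operator by truncating the singular part of $\mu$ near the origin and adding a vanishing viscosity term $\veps\Delta$, and mollify $f$ and $u_0$. The regularized equation is then a nondegenerate quasilinear parabolic problem whose solvability follows from classical parabolic theory (or a Galerkin/fixed-point scheme), producing smooth approximations $u_\veps$. The crux of this step is to derive bounds that are uniform in $\veps$: testing the equation formally against $\sgn(u_\veps)$ and against $(u_\veps-\|u_0\|_{L^\infty})^+$ gives, respectively, the $L^1$-contraction and an $L^\infty$ comparison principle, using that $\Operator$ is the generator of a symmetric L\'evy process and hence obeys the positive maximum principle. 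This yields control of $\|u_\veps(\cdot,t)\|_{L^1}$ and $\|u_\veps(\cdot,t)\|_{L^\infty}$ in terms of the data through \eqref{gas} and \eqref{u_0as}.

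Next I would extract compactness strong enough to pass to the limit in the nonlinearity. The uniform $L^1\cap L^\infty$ bounds give only weak-$*$ limits, which do not suffice for the nonlinear term $\varphi(u_\veps)$ since $\varphi$ is merely continuous. I would therefore establish equicontinuity in space, via translation estimates obtained by testing with translated solutions and invoking the contraction property together with the bound \eqref{eq:boundOp} on $\Operator$, and equicontinuity in time, by bounding $\partial_t u_\veps$ in a negative-order norm directly from the equation. A Kolmogorov--Riesz/Aubin--Lions type argument then produces a subsequence with $u_\veps\to u$ in $C([0,T];L^1_{\textup{loc}}(\R^N))$, whence $\varphi(u_\veps)\to\varphi(u)$ by continuity of $\varphi$ and dominated convergence against the uniform $L^\infty$ bound. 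Passing to the limit in the weak formulation \eqref{def_eq} --- with $\varphi(u)\Operator[\psi]\in L^1$ guaranteed by Remark \ref{addingconstants}(b) --- delivers a distributional solution in the class \eqref{eq:regsol}.

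Uniqueness is the hard part, and I would prove it by a Brezis--Crandall duality argument adapted to the nonlocal operator. Given two solutions $u_1,u_2$ in the class \eqref{eq:regsol}, set $w=u_1-u_2$ and $W=\varphi(u_1)-\varphi(u_2)$, so that $\partial_t w=\Operator[W]$ in the sense of distributions and $w\,W\ge 0$ pointwise by monotonicity of $\varphi$. Writing $W=a\,w$ with $a:=(\varphi(u_1)-\varphi(u_2))/(u_1-u_2)\ge 0$ (set to $0$ where $u_1=u_2$; it is bounded and measurable), I would test against the solution $\psi$ of the backward dual problem $-\partial_t\psi-a\,\Operator[\psi]=\chi$ with terminal data $\psi(\cdot,T)=0$, where $\chi\in C_\textup{c}^\infty(Q_T)$, $\chi\ge 0$, is a free probe. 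Since $u_1$ and $u_2$ share the initial datum $u_0$, the boundary contributions vanish and the computation formally yields $\int_0^T\!\int_{\R^N} w\,\chi\,\dd x\,\dd t=0$ for every such $\chi$, forcing $w=0$ and hence $u_1=u_2$.

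The main obstacle is making this duality rigorous: the coefficient $a$ is only nonnegative, bounded, and measurable --- it degenerates exactly where $\varphi$ is flat and where the two solutions coincide --- so the dual problem is a degenerate nonlocal equation with rough coefficients for which one cannot expect classical solvability. I would regularize $a$ (mollify it and bound it below by $\veps>0$) and the operator (truncating the singular part of $\mu$), solve the resulting nondegenerate dual problems, and extract just enough uniform control --- an $L^\infty$ bound on $\psi$ and a bound on $\Operator[\psi]$ paired against $\mu$ --- to pass to the limit and annihilate the error terms. Controlling the singular nonlocal part $\Levymu$ in this passage, where the second-difference structure of $\Operator$ must be matched against the defect in $a$, is the delicate point, and is precisely where the finiteness $\int_{|z|\le 1}|z|^2\dd\mu(z)<\infty$ in \eqref{muas} is indispensable. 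A secondary difficulty is the time-continuity assertion $u\in C([0,T];L^1_{\textup{loc}}(\R^N))$ in \eqref{eq:regsol}, which I would recover a posteriori from the equation and the $L^1$ bounds once uniqueness has pinned down the limit independently of the approximating subsequence.
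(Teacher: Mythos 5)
First, a point of reference: this paper does not prove Theorem \ref{unique} at all --- it is imported verbatim from \cite{DTEnJa17a} (uniqueness) and \cite{DTEnJa17c} (existence), so there is no in-paper proof to compare against. Your overall architecture (regularize $\varphi$, $\mu$ and the data, derive uniform $L^1$/$L^\infty$ bounds from contraction and comparison, get compactness from translation estimates in space and an equicontinuity estimate in time, pass to the limit; then prove uniqueness by duality in the spirit of Br\'ezis--Crandall \cite{BrCr79}) is indeed the right family of ideas and matches the cited works in spirit.

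There is, however, a concrete gap in your uniqueness argument. You write $W=a\,w$ with $a=(\varphi(u_1)-\varphi(u_2))/(u_1-u_2)$ and assert that $a$ is \emph{bounded} and measurable. Under \eqref{phias} the nonlinearity is only continuous and nondecreasing, so on the compact range $[-M,M]$ of the bounded solutions the difference quotient of $\varphi$ is unbounded wherever $\varphi$ has infinite slope --- e.g.\ $\varphi(\xi)=\xi^m$ with $m\in(0,1)$ at $\xi=0$, which is precisely the fast-diffusion case this paper is built to handle. With $a\notin L^\infty$ your backward dual problem $-\partial_t\psi-a\,\Operator[\psi]=\chi$ has an unbounded, degenerate, merely measurable coefficient, and the regularize-and-pass-to-the-limit scheme you sketch (mollify $a$, bound it below by $\veps$) no longer controls the error term, which now also requires truncating $a$ from above and estimating a quantity of the type $\iint\frac{(a-a_\veps)^2}{a_\veps}\,|\Operator[\psi_\veps]|^2$. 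This is exactly why the proof in \cite{DTEnJa17a} does not solve a dual equation: it runs the \emph{resolvent} form of the Br\'ezis--Crandall argument, pairing $w(\cdot,t)$ with $(I-\veps\,\Operator)^{-1}[w(\cdot,t)]$ and exploiting only that $w\,W\geq0$ with $w,W\in L^1\cap L^\infty$ (together with a nonlocal Liouville-type theorem for $\Operator$), thereby never needing $a$ to be bounded. Your plan would go through under \eqref{philipas}, but not under the hypotheses of the theorem as stated; to repair it you should either restrict to locally Lipschitz $\varphi$ and then remove that restriction by a separate approximation-and-stability argument, or switch to the resolvent version of the duality. The existence half of your proposal is essentially sound, with the caveat that the time-equicontinuity needed for the Kolmogorov--Riesz/Aubin--Lions step must be established uniformly in $\veps$ from the equation (as in Theorem \ref{thm:fullydisc}(f)), not recovered a posteriori.
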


Note that by \eqref{def_eq} and \eqref{eq:regsol}, $u(x,t)\to u_0(x)$ in $L_\textup{loc}^1(\R^N)$ as $t\to 0^+$.

\section{Numerical schemes -- general theory}
\label{sec:prevRes}

 We introduce and discuss the class of numerical methods that
we consider and state the main results about well-posedness, stability,
equicontinuity, compactness, and convergence. The proofs of most 
results in this section are given in \cite{DTEnJa18a}.

\subsection{The numerical method}
\label{ssec:num}
Our schemes will be defined on time-space grids, nonuniform
in time, but uniform in space
for simplicity. Our discrete diffusion operators will
then have weights and stencils not depending on the position $x$. 
Let $h>0$, the cube $R_h=h(-\frac{1}{2},\frac{1}{2}]^N$, and $\Grid$ be the
  uniform spatial grid
\[
\Grid:= h\Z^N=\{x_\beta:=h \beta: \beta\in \Z^N\}.
\]
The nonuniform time grid is
$$\mathcal{T}_{\Delta t}^T=\{t_j\}_{j=0}^{J}\qquad\text{for}\qquad
0=t_0<t_1<\ldots <t_J=T.$$
Let
$\J:=\{1,\ldots,J\}$, and denote the time steps by
\begin{equation*}\label{def:timesteps}
\Delta t_j=t_j-t_{j-1}, \quad j\in \J, \qquad \textup{and}  \qquad \Delta t=\max_{j\in \J}\Delta t_j.
\end{equation*}

On the grid $\Grid \times \mathcal{T}_{\Delta t}^T$ we define a class of numerical approximations of
\eqref{E} by discretizing in time and space using monotone finite
difference (quadrature) approximations. Using a 
$\theta$-method in time, the resulting scheme can be written as
\begin{equation}\label{FullyDiscNumSch1}
U_\beta^j=U_\beta^{j-1}+\Delta t_j\big(\theta \Levy^h[\varphi(U_{\cdot}^j)]_\beta+(1-\theta)\Levy^h[\varphi^h(U_\cdot^{j-1})]_\beta+F_\beta^j\big)\\
\end{equation}
for $\beta\in \Z^N$ and $j\in\J$ where the discrete diffusion operator
$\Levy^h$ is given by
\begin{equation}\label{FD}\tag{FD}
\Levy^h[\psi](x)=\sum_{\beta\neq0}\left(\psi(x+z_{\beta})-\psi(x)\right)\omega_{\beta,h}\quad\text{with}\quad
z_\beta\in \Grid.
\end{equation} 
We will always assume $z_\beta=-z_{-\beta}$,
$\omega_{\beta,h}=\omega_{-\beta,h}\geq0$, and
$\sum_{\beta\not=0}\omega_{\beta}<+\infty$, see Definition \ref{def:suitdisc} and
Lemma \ref{lemma:suitdisc} below. Then $\Levy^h$ is a monotone  finite
difference operator with {\em stencil} $\mathcal
S=\{z_\beta\}_{\beta}$ and {\em weights} 
$\{\omega_{\beta,h}\}_\beta$. Note that the scheme is explicit when
$\theta=0$, implicit when $\theta=1$, and Crank-Nicholson like when
$\theta=\frac12$. 

Formally we want $U_\beta^j\approx u(x_\beta,t_j)$, $\Levy^h \approx
\Operator$, $\varphi^h\approx \varphi$ and $F_\beta^j\approx
f(x_\beta,t_j)$. For $\Levy^h$ and $\varphi^h$ this means that we have
to impose {\em consistency} assumptions, see Definition
\ref{def:suitdisc}(ii) and Definition \ref{phidef}(ii) below. But
since $u$ and $f$ need not be continuous and point values are not always
defined or useful, we will interpret $U$ and $F$ as piecewise
polynomial approximations. In this paper we restrict to piecewise constant
approximations defined from cell-averages for simplicity. 
Hence as initial data for the scheme we take
\begin{equation*}\label{averaged1}
U^0_\beta:=\frac{1}{h^N}\int_{x_\beta+R_h}u_0(x)\dd x,\quad\  F^j_\beta:=\frac{1}{h^N \Delta t_j}\int_{t_j-\Delta t_j}^{t_j}\int_{x_\beta+R_h}f(x,\tau)\dd x \dd \tau.
\end{equation*}
Of course if $f$ and $u_0$ are continuous, we could use
$U^0_\beta:=u_0(x_\beta)$ and $F_\beta^j=f(x_\beta,t_j)$ instead  and all the results below would remain valid.

\subsection{The discretizations $\Levy^h$ and $\varphi^h$}
An admissible discretization $\Levy^h$ of $\mathfrak L$ should be
(i) monotone,  symmetric, 
(ii) consistent, and (iii) satisfy some uniform Levy integrability condition
(which is trivial in the local case). 
In the next definition we will use that $\Levy^h=\Levy^{\nu_h}$ where
$\Levy^{\nu_h}$ is a Levy operator like $\mathfrak L$ defined as
\[\tag{FD2}\label{FD2}
\Levy^{\nu_h}[\psi]:= \int_{|z|>0}(\psi(x+z)-\psi(x))\dd \nu_h(z) \quad \textup{with} \quad \nu_h(z)=\sum_{\beta\not=0} \delta_{z_\beta}(z) \omega_{\beta,h}.
\]
This surprising observation along with the sufficiently general
well-posedness result in Section \ref{sec:prevRes}, are key
ingredients that make our theory work.

\begin{definition}\label{def:suitdisc}
A family $\{\Levy^h\}_{h>0}$ of discretizations of  $\mathfrak L$ is
\textbf{admissible} if it is
\smallskip
\begin{enumerate}[{\rm (i)}]
\item {\bf{in the class \eqref{nuas}}:} $\Levy^h=\Levy^{\nu_h}$ for a measure $\nu_h$ satisfying \eqref{nuas} for all $h>0$,\!\!
\smallskip
\item {\bf consistent:} for every $\psi \in C_c^\infty(\R^N)$,
\[
\|\mathfrak{L}[\psi]-\Levy^h[\psi]\|_{L^1(\R^N)} \to 0 \quad \textup{as} \quad h\to0^+,
\]
\item {\bf uniformly in \eqref{muas}:}  
\begin{equation}\label{eq:unifLevy}\tag{UL}
\sup_{h<1}\sum_{\beta\not=0} (|z_\beta|^2\wedge 1 )\omega_{\beta,h}<+\infty.
\end{equation}

\end{enumerate}
\end{definition}
Note that $\|\mathfrak L[\psi]-\Levy^h[\psi]\|_{L^1(\R^N)}$ is the
{\em Local Truncation Error} of $\Levy^h$ (in $L^1$), and that in view of
\eqref{FD2}, condition \eqref{eq:unifLevy} can equivalently be written as
\begin{equation}\tag{UL2}\label{eq:unifLevy2}
\sup_{h<1}\int_{|z|>0} |z|^2 \wedge1\ \dd \nu_h(z)<+\infty.
\end{equation}
\begin{lemma}\label{lemma:suitdisc}
The operators $\{\Levy^h\}_{h>0}$ defined in
\eqref{FD} are in the class \eqref{nuas} if and
only if $z_\beta=-z_{-\beta}$,
$\omega_{\beta,h}=\omega_{-\beta,h}\geq0$, and
$\sum_{\beta\not=0}\omega_{\beta}<+\infty$.
\end{lemma}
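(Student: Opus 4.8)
The plan is to prove the biconditional in Lemma \ref{lemma:suitdisc} by unwinding the definition of ``in the class \eqref{nuas}'' from Definition \ref{def:suitdisc}(i), which requires exactly that $\Levy^h=\Levy^{\nu_h}$ for some nonnegative \emph{symmetric} Radon measure $\nu_h$ with finite total mass $\nu_h(\R^N)<\infty$. Via the identification \eqref{FD2}, the discrete operator \eqref{FD} corresponds to the atomic measure $\nu_h=\sum_{\beta\neq0}\delta_{z_\beta}\omega_{\beta,h}$, so the task reduces to translating the three conditions on $(z_\beta,\omega_{\beta,h})$ into the three defining properties of $\nu_h$ (nonnegativity, symmetry, finite mass). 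I would handle each direction by matching these up term by term.

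For the forward direction ($\Levy^h\in$ class \eqref{nuas} $\Rightarrow$ conditions), I would argue that once $\Levy^h=\Levy^{\nu_h}$ with $\nu_h$ a nonnegative symmetric Radon measure of finite total mass, reading off the atoms forces the stated conditions. Nonnegativity of $\nu_h$ gives $\omega_{\beta,h}\geq0$; symmetry of the measure (i.e.\ invariance under $z\mapsto-z$) pairs the atom at $z_\beta$ with weight $\omega_{\beta,h}$ to the atom at $-z_\beta$, forcing both $z_\beta=-z_{-\beta}$ (the stencil is symmetric) and $\omega_{\beta,h}=\omega_{-\beta,h}$; finally $\nu_h(\R^N)=\sum_{\beta\neq0}\omega_{\beta,h}<\infty$ is precisely the finite-mass requirement. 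For the converse, assuming $z_\beta=-z_{-\beta}$, $\omega_{\beta,h}=\omega_{-\beta,h}\geq0$, and $\sum_{\beta\neq0}\omega_{\beta,h}<\infty$, I would define $\nu_h$ by the same formula and check directly that it is a nonnegative symmetric Radon measure with finite mass, hence satisfies \eqref{nuas}; the representation $\Levy^h[\psi](x)=\int_{|z|>0}(\psi(x+z)-\psi(x))\dd\nu_h(z)$ then holds by plugging the atomic measure into \eqref{FD2} and recovering \eqref{FD}.

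One technical point I would want to state carefully is that the summability $\sum_{\beta\neq0}\omega_{\beta,h}<\infty$ is what makes $\nu_h$ a genuine finite (Radon) measure, and it also guarantees that the integral representation in \eqref{FD2} is absolutely convergent for bounded $\psi$ (e.g.\ $\psi\in C_c^\infty$ or merely $\psi\in L^\infty$), so there is no subtlety about the principal-value interpretation that appears for the singular operator $\Levy^\mu$ in Remark \ref{deflevyexplained}. Indeed, because the first-order correction term $z\cdot D\psi(x)\indikator$ is absent in \eqref{FD2}, symmetry of $\nu_h$ is what silently cancels the would-be drift, consistent with the symmetric-measure reduction noted in that remark.

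I do not expect a serious obstacle here, since the statement is essentially a bookkeeping lemma translating the discrete weights into the measure-theoretic language of \eqref{nuas}. The only place demanding a little care is the symmetry equivalence: I must make sure that the pair of conditions $z_\beta=-z_{-\beta}$ \emph{and} $\omega_{\beta,h}=\omega_{-\beta,h}$ together are exactly equivalent to invariance of the atomic measure $\nu_h$ under reflection, rather than just one of them, and that the indexing convention for $\beta$ (so that $-\beta$ indexes the reflected atom on the grid $\Grid=h\Z^N$) is consistent throughout. Beyond that, the proof is a direct verification in both directions.
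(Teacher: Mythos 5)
Your proposal is correct and follows essentially the same route as the paper's proof: identify $\nu_h=\sum_{\beta\neq0}\delta_{z_\beta}\omega_{\beta,h}$ via \eqref{FD2} and match nonnegativity, symmetry, and finite total mass of $\nu_h$ with the three stated conditions on the weights and stencil, the finite-mass part coming from $\nu_h(\R^N)=\sum_{\beta\neq0}\omega_{\beta,h}$. The paper's proof is just a terser version of this same bookkeeping, so no further comment is needed.
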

\begin{proof}
Since $\nu_h(z)=\sum_{\beta\not=0} \delta_{z_\beta}(z)
\omega_{\beta,h}$, equivalence for the symmetry and nonnegativity part
of \eqref{nuas} follows immediately. Equivalence for the boundedness follows from
$\nu_h(\R^N)=\sum_{\beta\not=0} \delta_{z_\beta}(\R^N) \omega_{\beta,h}=\sum_{\beta\not=0} \omega_{\beta,h}.$
\end{proof}

Assumption \eqref{eq:unifLevy} may
seem unusual, but it is in fact very natural in view of
\eqref{muas}. It is trivial  
to verify for local problems, and we now provide a very easy to use
sufficient condition for it to hold in the general case.

\begin{proposition}\label{prop:equivcons}
Assume \eqref{muas}, $\mathfrak L$ is defined by
\eqref{defbothOp}--\eqref{deflevy}, and $\{\Levy^{\nu_h}\}_{h>0}$
defined by \eqref{FD} is in the class \eqref{nuas}. Then
\eqref{eq:unifLevy2} holds if   
\begin{equation}\label{eq:pointwiseconv}
\Levy^{\nu_h}[\psi](x)\stackrel{h\to0^+}{\longrightarrow} \mathfrak L[\psi](x)
 \qquad  \textup{for all }\qquad x\in\R^N,\quad \psi
\in C_\textup{c}^\infty(\R^N).
\end{equation}
\end{proposition}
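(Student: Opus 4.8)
The plan is to read off the uniform bound \eqref{eq:unifLevy2} directly from the pointwise convergence \eqref{eq:pointwiseconv}, exploiting the elementary fact that a convergent net is bounded: for each fixed $\psi$, $\Levy^{\nu_h}[\psi](0)\to\mathfrak L[\psi](0)$, so $\limsup_{h\to0^+}|\Levy^{\nu_h}[\psi](0)|=|\mathfrak L[\psi](0)|<\infty$. The only regime that matters is $h\to0^+$, since for $h$ bounded away from zero each $\nu_h$ is a finite measure by \eqref{nuas} and the integrand is trivially integrable; so I expect the genuine content to be $\limsup_{h\to0^+}\int_{|z|>0}(|z|^2\wedge1)\dd\nu_h(z)<\infty$. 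I would split this quantity as $\int_{|z|\le1}|z|^2\dd\nu_h+\nu_h(\{|z|>1\})$ and control the two pieces separately, each by testing $\Levy^{\nu_h}$ against a single fixed $\psi\in C_c^\infty(\R^N)$ evaluated at the origin, using that the symmetry of $\nu_h$ makes $\Levy^{\nu_h}[\psi](0)=\int_{|z|>0}(\psi(z)-\psi(0))\dd\nu_h(z)$.

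For the local second moment I would fix $\psi_1\in C_c^\infty(\R^N)$ with $\psi_1\le0$ everywhere, $\psi_1(0)=0$, and $\psi_1(x)=-|x|^2$ on the unit ball, smoothly cut off to vanish outside $B(0,2)$. Then $\Levy^{\nu_h}[\psi_1](0)=-\int_{|z|\le1}|z|^2\dd\nu_h+\int_{|z|>1}\psi_1\dd\nu_h$, and since $\psi_1\le0$ the last integral is nonpositive, giving $\int_{|z|\le1}|z|^2\dd\nu_h\le-\Levy^{\nu_h}[\psi_1](0)$. For the far-field mass I would fix a bump $\psi_2\in C_c^\infty(\R^N)$ with $0\le\psi_2\le1$, $\psi_2(0)=1$, and $\supp\psi_2\subseteq B(0,\tfrac12)$; then $\psi_2(z)-1\le0$ everywhere and equals $-1$ for $|z|\ge\tfrac12$, so $\Levy^{\nu_h}[\psi_2](0)\le-\nu_h(\{|z|\ge\tfrac12\})$, whence $\nu_h(\{|z|>1\})\le\nu_h(\{|z|\ge\tfrac12\})\le-\Levy^{\nu_h}[\psi_2](0)$. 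Adding the two bounds and invoking \eqref{eq:pointwiseconv}, I would conclude
$$\limsup_{h\to0^+}\int_{|z|>0}(|z|^2\wedge1)\dd\nu_h(z)\le|\mathfrak L[\psi_1](0)|+|\mathfrak L[\psi_2](0)|,$$
which is finite because each $\mathfrak L[\psi_i](0)$ is a convergent integral under \eqref{muas}.

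The conceptual step I expect to be the crux is the choice of test functions, and in particular the observation that a compactly supported bump centred at the origin detects the \emph{entire} tail mass of $\nu_h$ with weight $-1$: this is what lets a single $\psi_2$ bound $\nu_h(\{|z|\ge\tfrac12\})$ all the way to infinity, even though a priori $\nu_h$ may charge arbitrarily large $|z|$. The one remaining point to address is the passage from $\limsup_{h\to0^+}$ to the stated $\sup_{h<1}$ in \eqref{eq:unifLevy2}; I would either read the condition as a statement about the refining family (where only small $h$ is relevant) or note that finiteness of each individual $\nu_h$ settles any fixed $h\in[h_0,1)$. The existence and smoothness of $\psi_1,\psi_2$ is standard and I would omit its verification.
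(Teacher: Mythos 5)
Your proof is correct and follows essentially the same route as the paper's: both bound $\int_{|z|>0}(|z|^2\wedge1)\dd\nu_h(z)$ by evaluating $\Levy^{\nu_h}$ at the origin on suitably chosen compactly supported test functions and then invoke the pointwise convergence \eqref{eq:pointwiseconv} together with the finiteness of $\mathfrak L[\psi](0)$ under \eqref{muas}. The only cosmetic difference is that the paper packs both estimates into a single test function (with $\psi=-1+|x|^2$ on the unit ball and $\psi\geq0$ outside, so that $-\psi(0)=1$ weights the tail mass), whereas you use two; the paper is also no more explicit than you are about the passage from $\limsup_{h\to0^+}$ to $\sup_{h<1}$.
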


\begin{remark}

  \eqref{eq:pointwiseconv} follows e.g. from $L^\infty$-consistency, $\| \mathfrak L[\psi]
  -\Levy^{\nu_h}[\psi]\|_{L^\infty(\R^N)}\to0$ as $h\to 0^+$ for all
  $\psi \in C_\textup{c}^\infty(\R^N)$.
\end{remark}

\begin{proof}[Proof of Proposition \ref{prop:equivcons}]
By \eqref{FD2}, the  Taylor expansion
$$
\psi(x+z)=\psi(x)+z\cdot D\psi(x)+\int_0^1 (1-t) z^T D^2 \psi (x+t z) z \dd t,
$$
and since $\int_{|z|<1}z \dd\nu_h(z)=0$ by the symmetry of $\nu_h$, we find
that  
\begin{equation*}\label{eq:altdefL}
\begin{split}
\Levy^{\nu_h}[\psi](x)=&\int_{|z|\leq1}\int_0^1 (1-t) z^T D^2 \psi (x+t z) z \dd t \dd \nu_h(z)\\
&+\int_{|z|>1}(\psi(x+z)-\psi(x) )\dd \nu_h(z).
\end{split}
\end{equation*}
Then we take $\psi\in C_c^\infty$ such that
$\psi(x)=-1+|x|^2$ for 
  $|x|\leq1$ and $\psi(x)\geq0$ for $|x|>1$.
 Since $|tz|<1$ in the first integral above, 
\begin{equation*}
\begin{split}
\Levy^{\nu_h}[\psi](0)&=\int_{|z|\leq1}|z|^2 \dd \nu_h(z)+\int_{|z|>1}\psi(z)\dd\nu_h(z)-\psi(0)\int_{|z|>1}\dd\nu_h(z) \\
&\geq \int_{|z|\leq1}|z|^2 \dd \nu_h(z)+0+\int_{|z|>1}\dd\nu_h(z)\geq \int_{|z|>0} (|z|^2 \wedge 1) \dd \nu_h(z).
\end{split}
\end{equation*}
By \eqref{muas}, the bound \eqref{eq:boundOp} holds, and then by
\eqref{eq:pointwiseconv} we conclude that 
\begin{equation*}
\begin{split}
\sup_{h<1} \int_{|z|>0} (|z|^2 \wedge 1) \dd \nu_h(z)&\leq \sup_{h<1} \Levy^{\nu_h}[\psi](0) \\
&\leq |\mathfrak L[\psi](0)|+\sup_{h<1}|(\Levy^{\nu_h}-\mathfrak L )[\psi](0)|<+\infty.
\end{split}
\end{equation*}
The proof is complete.
\end{proof}

In most situations we can simply take the nonlinearity
$\varphi_h=\varphi$, but sometimes it is useful to approximate also
$\varphi$. We will see below that this is true especially for fast diffusions.

\begin{definition}\label{phidef}
  A family $\{\varphi^h\}_{h>0}$ of approximation of $\varphi$ is
  \textbf{admissible} if it is
\begin{enumerate}[{\rm (i)}]
\item {\bf{in the class \eqref{philipas}}} for every $h>0$, $\varphi^h$ satisfy \eqref{philipas},
\item {\bf{consistent}:}  $\varphi^h\to \varphi$ locally uniformly as $h\to0^+$.
\end{enumerate}
\end{definition}

\subsection{CFL condition for the explicit part}
\label{sec:CFL}
A crucial property in our convergence analysis is monotonicity. Our
schemes are monotone under the CFL condition:
\begin{equation}\label{CFL}\tag{CFL}
\Delta t(1-\theta)L_{\varphi^h} \nu^h(\R^N)\leq1,
\end{equation}
where $L_{\varphi^h}$ denotes the Lipschitz constant of
$\varphi^h$. Note that the condition always holds if $\theta=1$ and
the scheme is implicit. If the scheme has some explicit part,
$\theta\in[0,1)$, then this condition gives a relation between $\Delta
  t$ and $h$. In the local case, we typically have  $\nu_h(\R^N)=O(h^{-2})$, and
  the CFL condition becomes the classical
  $$\Delta t\leq Ch^2.$$
In the (explicit and) nonlocal case, typically
$\nu_h(\R^N)=O(h^{-\alpha })$ for some $\alpha\in(0,2)$ (e.g $\nu_h\sim |z|^{-N-\alpha}$), $\nu_h(\R^N)=O(|\log (h)|)$   (e.g $\nu_h\sim |z|^{-N} e^{-|z|}$) or $\nu_h(\R^N)<\tilde{C}$ (e.g $\nu_h\sim f \in L^1(\R^N)$), and the CFL
condition becomes
$$\Delta t\leq Ch^\alpha, \quad \Delta t\leq C \frac{1}{|\log (h)|} \quad \textup{or} \quad \Delta t\leq C.$$
 We refer to \cite{DTEnJa18a} for more details and
  the origin of such conditions.

\begin{remark}
Note that $\varphi^h$  has to be Lipschitz for the CFL condition to
make sense. Hence if $\varphi$ is not Lipschitz (the fast
diffusion case), it must be replaced by a Lipschitz approximation to
obtain a monotone explicit scheme. The
Lipschitz constant $L_{\varphi^h }$ will then blow up as $h\to 0$, and
the overall CFL condition is worse than in the Lipschitz case. See
Section \ref{explFD} for examples.
\end{remark}

\subsection{Comparison, stability, and convergence of the method}
By \cite{DTEnJa18a} our numerical method has the following list of properties.

\begin{theorem}[Existence and uniqueness]\label{thm:existuniqueNumSchFully}
Assume \eqref{gas}, \eqref{u_0as} and \eqref{phias}, $\Levy^h$ defined
in \eqref{FD} satisfies \eqref{nuas}, $\varphi^h$ is in the class
\eqref{philipas}, and $h,\Delta t>0$ are such that \eqref{CFL} holds. Then there exists a unique solution $U_\alpha^j$ of \eqref{FullyDiscNumSch1} such that
$$
\sum_{j\in\J}\sum_{\beta}|U_\alpha^j|<\infty.
$$

\end{theorem}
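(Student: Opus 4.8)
The plan is to solve the scheme one time step at a time, by induction on $j\in\J$, since \eqref{FullyDiscNumSch1} is implicit only in $U^j$ and explicit in $U^{j-1}$. For $j=0$, $U^0\in\ell^1(\Z^N)$ because $\sum_\beta|U^0_\beta|\le h^{-N}\|u_0\|_{L^1}<\infty$ by \eqref{u_0as}. For the inductive step, assume $U^{j-1}\in\ell^1$ and rewrite \eqref{FullyDiscNumSch1} by moving the implicit term to the left: $U^j_\beta-\Delta t_j\theta\,\Levy^h[\varphi(U_\cdot^j)]_\beta=G^j_\beta$, where $G^j_\beta:=U^{j-1}_\beta+\Delta t_j\big((1-\theta)\Levy^h[\varphi^h(U_\cdot^{j-1})]_\beta+F^j_\beta\big)$. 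Using that $\varphi^h$ is (globally, by Remark \ref{addingconstants}(a)) Lipschitz, that $\nu_h(\R^N)=\sum_{\beta\neq0}\omega_{\beta,h}<\infty$ by \eqref{nuas}, and that $\|F^j\|_{\ell^1}<\infty$ by \eqref{gas}, a direct estimate gives $G^j\in\ell^1$. It therefore suffices to show that, for any $G\in\ell^1$, the implicit equation
\[
U_\beta-\Delta t_j\theta\,\Levy^h[\varphi(U_\cdot)]_\beta=G_\beta
\]
has a unique solution $U\in\ell^1$; the explicit case $\theta=0$ is trivial ($U=G$), so assume $\theta>0$.

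For existence I would use a finite-dimensional truncation together with a compactness argument, since the merely continuous, nondecreasing $\varphi$ of \eqref{phias} prevents a contraction-mapping proof. On the box $\Lambda_R=\{\beta:|\beta|_\infty\le R\}$, extended by $0$ outside, the truncated problem is a continuous finite-dimensional equation $F_R(U)=G|_{\Lambda_R}$. The crucial a priori bound comes from pairing with $\sgn(U)$: since $\varphi(0)=0$ and $\varphi$ is nondecreasing we have $\sgn(U_\beta)\varphi(U_\beta)=|\varphi(U_\beta)|$, and a discrete summation by parts exploiting $\omega_{\gamma,h}=\omega_{-\gamma,h}\ge0$ shows $\sum_\beta\sgn(U_\beta)\,\Levy^h[\varphi(U_\cdot)]_\beta\le0$; hence every solution obeys $\|U\|_{\ell^1}\le\|G\|_{\ell^1}$, uniformly in $R$. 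Existence on $\Lambda_R$ then follows from this bound by topological degree (homotopy invariance along $H(U,t)=U-t\Delta t_j\theta\,\Levy^h[\varphi(U_\cdot)]-G$, whose zeros satisfy the same $\ell^1$ bound for all $t\in[0,1]$). Finally, the uniform bound gives $\|U^{(R)}\|_{\ell^\infty}\le\|G\|_{\ell^1}$, so after a diagonal extraction $U^{(R)}_\beta\to U_\beta$ pointwise; Fatou yields $U\in\ell^1$, and dominated convergence in the absolutely convergent stencil sum (dominated by $2\omega_{\gamma,h}\sup_{|s|\le\|G\|_{\ell^1}}|\varphi(s)|$) lets me pass to the limit, so $U$ solves the equation.

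For uniqueness I would sidestep the fact that $\varphi(U)-\varphi(V)$ need not be summable by working pointwise. If $U,V\in\ell^1$ both solve the equation, put $D_\beta=U_\beta-V_\beta$ and $q_\beta=\varphi(U_\beta)-\varphi(V_\beta)$. Subtracting the two equations gives $D_\beta=\Delta t_j\theta\,\Levy^h[q]_\beta$; since $\varphi$ is nondecreasing, $D_\beta$ and $q_\beta$ have the same sign, and the triangle inequality then produces the clean pointwise estimate
\[
0\le|D_\beta|\le\Delta t_j\theta\,\Levy^h[\,|q|\,]_\beta\qquad\text{for every }\beta.
\]
Thus $r:=|q|\ge0$ satisfies $\Levy^h[r]_\beta\ge0$ for all $\beta$, while $r_\beta\to0$ as $|\beta|\to\infty$ (because $U,V\in\ell^1$ and $\varphi$ is continuous with $\varphi(0)=0$). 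A discrete maximum principle concludes: were $r\not\equiv0$, its positive maximum would be attained at some $\beta^\ast$, and $\Levy^h[r]_{\beta^\ast}\ge0$ would force $r$ to equal that maximum at every stencil neighbour; iterating along a direction $\gamma_0$ with $\omega_{\gamma_0,h}>0$ propagates the maximum to infinity, contradicting $r\to0$. Hence $q\equiv0$, and then $|D_\beta|\le\Delta t_j\theta\,\Levy^h[\,|q|\,]_\beta=0$, so $U=V$. Uniqueness of the whole sequence follows by induction, and since $J<\infty$ and each $U^j\in\ell^1$ we obtain $\sum_{j\in\J}\sum_\beta|U^j_\beta|<\infty$.

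The main obstacle throughout is the low regularity of $\varphi$, which is only continuous and nondecreasing: this is exactly what rules out the one-line Lipschitz fixed-point argument and forces both the truncation/degree/a-priori-bound route for existence and the pointwise-estimate/maximum-principle route for uniqueness. I note that \eqref{CFL} and the Lipschitz hypothesis on $\varphi^h$ enter only to guarantee $G^j\in\ell^1$ (and, later, monotonicity and stability); the implicit solve above is unconditionally well posed and in particular requires no CFL restriction.
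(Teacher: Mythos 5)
The paper does not actually prove this theorem here -- it is stated with the remark that ``the proofs of most results in this section are given in \cite{DTEnJa18a}'' -- so your argument can only be compared with the framework of Part I rather than with a proof printed in this paper. Judged on its own terms, your proof is correct and complete, and it takes a more elementary, self-contained route than the machinery of Part I (which works with piecewise-constant interpolants in $L^1(\R^N)$ and approximation/compactness arguments at the level of the L\'evy-type operator $\Levy^{\nu_h}$). Your decomposition into an explicit update plus a single implicit solve is the natural one, and the two genuinely delicate points are handled correctly: (i) for existence, since $\varphi$ in the implicit term is only continuous and nondecreasing under \eqref{phias}, a contraction argument is unavailable, and your finite-dimensional truncation with the Kato-type a priori bound $\|U\|_{\ell^1(\Lambda_R)}\leq\|G\|_{\ell^1}$ (using $\sgn(U_\beta)\varphi(U_\beta)=|\varphi(U_\beta)|$ and the symmetry $\omega_{\gamma,h}=\omega_{-\gamma,h}\geq0$), Brouwer degree, and dominated convergence in the absolutely convergent stencil sum is sound; (ii) for uniqueness, you correctly notice that $\varphi(U)-\varphi(V)$ need not lie in $\ell^1$, so the usual summed $L^1$-contraction cannot be invoked directly, and your pointwise estimate $|D_\beta|\leq\Delta t_j\theta\,\Levy^h[\,|q|\,]_\beta$ combined with a propagation-of-maximum argument (using $r=|q|\geq0$, $r\to0$ at infinity, and iteration along a stencil direction with positive weight) is a valid replacement; the degenerate case where all weights vanish is trivially covered. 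Your closing observation that \eqref{CFL} and the Lipschitz hypothesis on $\varphi^h$ are not needed for solvability of the implicit step -- only for summability of the explicit data $G^j$ and, later, for monotonicity and stability -- is accurate and consistent with the role \eqref{CFL} plays in Theorem \ref{thm:fullydisc}. What your approach buys is independence from nonlinear semigroup or accretive-operator theory and from any regularization of $\varphi$ in the implicit part; what the paper's route (via Part I) buys is a uniform treatment of the scheme and the PDE in the same distributional framework, which is then reused for the compactness and convergence results.
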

\begin{theorem}[Properties and convergence] \label{thm:fullydisc}
  Assume \eqref{muas}, \eqref{phias}, $u_0,v_0$ satisfy \eqref{u_0as},  $f,g$
  satisfy \eqref{gas}, $\{\Levy^h\}_{h>0}$ and $\{\varphi^h\}_{h>0}$
  are admissible approximations of $\mathfrak L$ and $\varphi$, $\Delta
t=o_h(1)$ such that \eqref{CFL} holds, and $U_\beta^j,V_\beta^j$ are
solutions of the scheme \eqref{FullyDiscNumSch1} with data $u_0,v_0$
and $f,g$. 
Then
\begin{enumerate}[{\rm (a)}]
\addtocounter{enumi}{1}
\item \textup{(Monotone)} If
  $U^0_\beta\leq V^0_\beta$ and $F^j_\beta\leq G^j_\beta$ for all
  $\beta$, \ then
  $U_\beta^{j}\leq V_\beta^{j}$ for all $\beta$, $j\geq0$.
\item  \textup{($L^1$-stable)}
  $\displaystyle \sum_\beta |U_\beta^j|\leq \sum_\beta |U_\beta^0|+
  \sum_{l=1}^j\sum_\beta |F_\beta^l| \Delta t_{l}$.
\item  \textup{($L^\infty$-stable)}
  $\displaystyle \sup_\beta |U_\beta^j|\leq \sup_\beta |U_\beta^0|+
  \sup_\beta \sum_{l=1}^j|F_\beta^l| \Delta t_{l}$.
\item \textup{(Conservative)} If $\varphi^h$   satisfy \eqref{philipas}, 
$\displaystyle \sum_\beta U_\beta^j=\sum_\beta U_\beta^0+ \sum_{l=1}^j
  \sum_\beta F_\beta^l\Delta t_{l}.$
\item \textup{($L^1$-contractive)}
  $\displaystyle 
\sum_\beta (U_\beta^j-V_\beta^j)^+
\leq \sum_\beta (U_\beta^0-V_\beta^0)^+ + \sum_{l=1}^j\sum_\beta
(F_\beta^l-{G}^l_\beta)^+ \Delta t_{l}.$
\item \textup{(Equicontinuity in
  time)}  For all compact sets
  $K\subset \R^N$ there exists a modulus of continuity $\Lambda_K$
  (independent of $h$ and $\Delta t$) such that 
\begin{equation*}
\begin{split}
h^N\sum_{x_\beta \in \Grid\cap K} |U_\beta^j-U_\beta^{j-k}|\leq \Lambda_K(t_j-t_{j-k})+|K|\int_{t_{j-k}}^{t_j}\|f(\cdot,\tau)\|_{L^\infty(\R^N)}\dd \tau.
\end{split}
\end{equation*}
\item \textup{(Convergence)} There
exists a unique distributional solution $u\in L^1(Q_T)\cap
L^\infty(Q_T)\cap C([0,T];L_\textup{loc}^1(\R^N))$ of \eqref{E}-\eqref{IC}
and for all compact sets $K\subset \R^N$, 
\begin{equation*}
\vertiii{U-u}_{K}:=\max_{t_j\in\GridT}\left\{\sum_{x_\beta \in \Grid\cap K} h^N |U_{\beta}^j-u(x_\beta,t_j)|\right\} \to 0 \quad  \text{as} \quad h\to0^+.
\end{equation*}
\end{enumerate}
\end{theorem}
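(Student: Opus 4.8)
The plan is to exploit the observation behind \eqref{FD2} and Lemma \ref{lemma:suitdisc}, namely that $\Levy^h=\Levy^{\nu_h}$ is a finite, symmetric, nonnegatively weighted nonlocal operator, so that the whole list (b)--(h) follows from one discrete structural inequality together with the CFL condition \eqref{CFL} and the a priori bounds. First I would record the summation-by-parts identity, valid for any summable sequences $\eta,a$ and using only $\omega_{\gamma,h}=\omega_{-\gamma,h}\ge0$:
\[
\sum_\beta \eta_\beta\, \Levy^h[a]_\beta = -\tfrac12\sum_{\beta}\sum_{\gamma\neq 0}(\eta_{\beta+\gamma}-\eta_\beta)(a_{\beta+\gamma}-a_\beta)\,\omega_{\gamma,h}.
\]
Taking $\eta\equiv1$ gives $\sum_\beta\Levy^h[a]_\beta=0$ (conservation). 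Taking $\eta_\beta=\indik_{\{w_\beta>0\}}$ and $a_\beta=\varphi(U_\beta)-\varphi(V_\beta)$ with $w_\beta=U_\beta-V_\beta$, each summand is nonnegative: if $\eta_{\beta+\gamma}>\eta_\beta$ then $w_{\beta+\gamma}>0\ge w_\beta$, so by monotonicity of $\varphi$, $a_{\beta+\gamma}\ge0\ge a_\beta$. This yields the \emph{discrete Kato inequality} $\sum_\beta \indik_{\{w_\beta>0\}}\Levy^h[\varphi(U)-\varphi(V)]_\beta\le0$, the workhorse for everything below; all rearrangements are justified by the $\ell^1$-summability from Theorem \ref{thm:existuniqueNumSchFully}.

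Next, comparison (b). I would split \eqref{FullyDiscNumSch1} into the implicit solve for $U^j$ with right-hand side $G^j[U^{j-1}]$ collecting the explicit and source terms. Under \eqref{CFL} the explicit map $U^{j-1}\mapsto G^j[U^{j-1}]$ is order preserving: its diagonal derivative is $1-\Delta t_j(1-\theta)(\varphi^h)'\nu_h(\RN)\ge0$ and its off-diagonal derivatives are $\Delta t_j(1-\theta)(\varphi^h)'\omega_{\gamma,h}\ge0$. For the implicit operator $\Phi(U)=U-\Delta t_j\theta\Levy^h[\varphi(U)]$ I would prove inverse monotonicity: if $\Phi(U)\le\Phi(V)$, testing the difference with $\indik_{\{w_\beta>0\}}$ and summing makes the Kato inequality kill the diffusion term, leaving $\sum_\beta w_\beta^+\le0$, hence $U\le V$. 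Composing, $U^{j-1}\le V^{j-1}$ and $F^j\le G^j$ give $U^j\le V^j$, and (b) follows by induction. Conservation (e) is the $\eta\equiv1$ identity above; $L^\infty$-stability (d) follows by comparing $U$ against the spatially constant super/sub-solutions $\pm\big(\|U^0\|_\infty+\sum_{l\le j}\|F^l\|_\infty\Delta t_l\big)$, for which $\Levy^h[\varphi(\mathrm{const})]=0$. The $L^1$-contraction (f) I would obtain directly by the same $\indik_{\{w^j_\beta>0\}}$ test on the full time-stepped difference, using Kato on the implicit term and \eqref{CFL} to absorb the explicit one, yielding $\sum_\beta(w^j_\beta)^+\le\sum_\beta(w^{j-1}_\beta)^+ + \Delta t_j\sum_\beta(F^j_\beta-G^j_\beta)^+$, then iterating in $j$; alternatively (f) follows from (b)+(e) by Crandall--Tartar. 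Then (c) is (f) with $V\equiv0$.

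The substantial step is equicontinuity in time (g). Pairing the increment $U^j-U^{j-k}=\sum_{l=j-k+1}^{j}\Delta t_l\big(\theta\Levy^h[\varphi(U^l)]+(1-\theta)\Levy^h[\varphi^h(U^{l-1})]+F^l\big)$ with a fixed compactly supported test function and moving $\Levy^h$ onto it (self-adjointness), the uniform Lévy bound \eqref{eq:unifLevy} together with the $\Operator$-type bound \eqref{eq:boundOp} and the a priori $L^1\cap L^\infty$ control give a modulus for the \emph{weak} time-increment that is independent of $h$ and $\Delta t$. To upgrade this to the strong local estimate in (g) I would combine it with spatial equicontinuity, itself obtained by applying the $L^1$-contraction (f) to $U$ and its spatial translate (both solve the scheme, with translated data): a Kruzhkov-type interpolation of weak-in-time plus strong-in-space regularity produces the claimed strong modulus $\Lambda_K$, with the source contributing the $|K|\int\|f\|_{L^\infty}$ term. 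I expect this interpolation, and the need for constants uniform in $h$ despite the singular behaviour of $\Levy^h$, to be the main obstacle; the uniform condition \eqref{eq:unifLevy}/\eqref{eq:unifLevy2} is precisely what rescues it. Finally, for convergence (h), the uniform $L^1\cap L^\infty$ bounds together with the space/time equicontinuity give, via Kolmogorov--Riesz, precompactness of the piecewise-constant interpolants in $C([0,T];L^1_\textup{loc}(\RN))$; I extract a subsequence $U_h\to u$. The strong $L^1_\textup{loc}$ convergence is essential to pass the \emph{continuous-only} nonlinearity, $\varphi^h(U_h)\to\varphi(u)$, using admissibility of $\{\varphi^h\}$ (Definition \ref{phidef}). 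Multiplying the scheme by a test function, summing by parts, and invoking consistency of $\Levy^h$ (Definition \ref{def:suitdisc}(ii)) and of the cell-average data, I pass to the limit and recover \eqref{def_eq}, so $u$ is a distributional solution in the class \eqref{eq:regsol}. By the uniqueness Theorem \ref{unique} the limit is independent of the subsequence, whence the whole family converges, proving (h).
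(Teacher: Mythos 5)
This paper does not actually prove Theorem \ref{thm:fullydisc}: the result is quoted from Part I \cite{DTEnJa18a}, and the surrounding text only describes the strategy used there (the discrete operator is itself a L\'evy operator $\Levy^{\nu_h}$ with atomic measure, so the scheme is an equation of type \eqref{E}; compactness in $C([0,T];L^1_{\textup{loc}}(\R^N))$ is obtained from the contraction and equicontinuity estimates; and the limit is identified via the uniqueness Theorem \ref{unique}). Your outline follows exactly that strategy --- discrete Kato inequality from symmetry of the weights, comparison under \eqref{CFL}, spatial $L^1$-modulus from the contraction applied to grid translates, weak-in-time plus strong-in-space interpolation for (g), compactness and uniqueness for (h) --- so it matches the paper's approach; the only points to tighten are the summability needed to justify the double-sum rearrangements when $\varphi$ is merely continuous (which is precisely why (e) carries the extra hypothesis \eqref{philipas}), and the fact that the constant barriers used for (d) are not in $\ell^1$, so that comparison step needs a truncation or a separate argument.
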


Note that our schemes are stable in $L^p$ for any $p\in[1,\infty]$ by
interpolation. The discrete norm convergence results is equivalent to
convergence in $C([0,T];L^1_{\textup{loc}}(\R^N))$ for interpolants of the
numerical solution (piecewise constant in space and piecewise linear
in time), see \cite{DTEnJa18a} for the details.
Convergence was proved through a compactness argument in this space,
where equicontinuity results like Theorem \ref{thm:fullydisc} (f) and
(g) were needed. By the $L^p$ stability, convergence also holds in $C([0,T];L_\textup{loc}^p(\R^N))$ for all $p\in[1,\infty)$.

\subsection{Some extensions}
As shown in \cite{DTEnJa18a}, the results of Theorem \ref{thm:existuniqueNumSchFully} and Theorem \ref{thm:fullydisc} also hold for a larger class of schemes,
\begin{equation*}
\begin{split}
U_\beta^j=U_\beta^{j-1}+\Delta t_j\bigg(\sum_{k=1}^n\Levy_k^h[\varphi_k^h(U_{\cdot}^j)]_\beta+\sum_{k=n+1}^m\Levy_k^h[\varphi_k^h(U_\cdot^{j-1})]_\beta+F^j_\beta\bigg),
\end{split}
\end{equation*}
where $n,m\in\N$ with $n\leq m$ and
$$
\sum_{k=1}^n\Levy_k^{h}[\varphi_k^h(U_h^j)](x)+\sum_{k=n+1}^m\Levy_k^{h}[\varphi_k^h(U_h^{j-1})](x)\approx
\mathfrak L[\varphi(u)](x,t_j).
$$
Depending on the choices of
   $\Levy_k^{h}$ and $\varphi^h_k$, we can then get many different
   schemes:
\begin{enumerate}[{\rm (1)}]
 \item Discretizing separately the different parts of the operator
   $$\mathfrak L=\Operator=L^\sigma+\mathcal L^\mu_{\text{sing}}+\mathcal
   L^\mu_{\text{bnd}},$$
   e.g. the local, singular nonlocal, and bounded nonlocal
   parts, corresponds to different choices for  $\Levy_k^h$. See
   Section \ref{sec:discspa} for a detailed discussion.\smallskip  
\item Explicit schemes ($\theta=0$),
   implicit schemes ($\theta=1$), or combinations like Crank-Nicholson
   ($\theta=\frac12$), follow by the choices
   $$\Levy_1^{h}=\theta\Levy^{h}\qquad\text{and}\qquad
   \Levy_2^{h}=(1-\theta)\Levy^{h}.$$
 \item Combinations of type (1) and (2) schemes,
   e.g. implicit discretization of the unbounded part of $\Operator$
   and explicit discretization of the bounded part.
\end{enumerate}

\section{Numerical schemes -- discretizations}
\label{sec:discspa}

In this section we explore known and find new approximations of
$\mathfrak L$ and $\varphi$, and in every case we show that they are
admissible in the sense of Definitions \ref{def:suitdisc} and
\ref{phidef} (see also Lemma \ref{lemma:suitdisc}) and hence yield
monotone, stable, and convergent numerical schemes 
for \eqref{E} -- \eqref{IC} by Theorem \ref{thm:fullydisc}.
Many of these schemes are completely new in this setting, and even for
many of the known schemes, the convergence results are new.
For the diffusion
operator $\mathfrak L=\Operator=\Levy^\mu + L^\sigma$, we present a
series of possible discretizations of both the nonlocal part $\Levymu$
of the form \eqref{deflevy} satisfying \eqref{muas} and the local
second order elliptic operator $L^\sigma$ given by
\eqref{deflocaldiffusion}. Most of these discretizations apply  to
all such operators and are not restricted to the fractional
Laplacian/Laplacian. We end the exposition by discussing how to 
handle non-Lipschitz merely continuous nonlinearities $\varphi$, and
hence also fast diffusions. 

The nonlocal operator $\Levy^\mu$ has a possibly singular part and a nonsingular
part that can (and often should) be discretized separately: for 
$r>0$,

\begin{equation*}
\begin{split}
\Levy^\mu[\psi](x)&=\int_{0<|z|\leq r}\big(\psi(x+z)-\psi(x)-z\cdot
D\psi(x)\big)\dd \mu(z)\\
&\quad +\int_{|z|>r}\big(\psi(x+z)-\psi(x)\big)\dd \mu(z)\\
&=:\Levy_{r}^\mu[\psi](x)+\Levy^{\mu,r}[\psi](x).
\end{split}
\end{equation*}
 
When we discretize this operator, we have to take $h\leq r=o_h(1)$
where $h$ is the discretization in space parameter. Often we can
simply take $r=h$, but in some cases a different choice can produce
higher order discretizations. 
 We will present admissible discretizations for general measures $\mu$
 and state their Local Truncation Error (LTE). We also give the LTE
 when the nonlocal operator is a fractional derivative in the sense of
 \eqref{alp}.
  
\subsection{Lagrange interpolation}\label{subsec:interp}
Let $\{p^k_\beta\}_{\beta\in\Z^N}$ be the basis of piecewise
Lagrange polynomials of order $k$ on the grid $\Grid$, i.e. $\sum_{\beta} p_{\beta}^k(x)\equiv1$ for all $x\in\R^N$ and
$p_\beta^k(z_\gamma)=1$ for $\beta=\gamma$ and zero otherwise. Since
the grid is uniform, we may define these functions in a tensorial way
for $N>1$ (direction by direction). On $\Grid$ the Lagrange polynomial
interpolant of order $k$ of a function $\psi$ is given by
\begin{equation}\label{eq:LagInterp}
I_h^k[\psi](z):=\sum_{\beta\neq0}\psi(z_\beta)p_\beta^k(z),
\end{equation}
and if $\psi \in C_c^\infty(\R^N)$, the corresponding interpolation error is
\begin{equation}\label{eq:LagInterpError}
\|I_h^k[\psi] -\psi\|_{L^p(\R^N)}=C \|D^{k+1}\psi\|_{L^p(\R^N)} h^{k+1}
\end{equation}
where $C=C(k,p)$ and $p=\{1,\infty\}$ (cf. e.g. \cite{CiRa72}).
Since the grid is uniform, the $p^k_\beta$-basis will have a lot of symmetries. E.g. when
$k=1$ (linear interpolation), $0\leq
p^1_{\beta}(x)=p^1_{0}(x-z_\beta)$ for $z_\beta=\beta h\in\Grid$ and
$\beta\in\Z^N$, and
$p^1_0(x_1,\dots,x_i,\dots,x_N)=p^1_0(x_1,\dots,-x_i,\dots,x_N)$ for
$x_i\in\R$ and $i=1,\dots,N$.

%%%%%%%%%%%%%%%%%%%%%%%%%%%%%%%%%%%%%%%%%%%%%%%%%%%%

\subsection{Discretizations of the local operator
  $L^\sigma$}\label{sec:discLocal} The operator $ L^\sigma$ given by
\eqref{deflocaldiffusion} is a \emph{local}, self-adjoint, and
possibly degenerate operator that can be written as 
\begin{equation}\label{localalt}
L^\sigma [\psi](x):=\text{tr}\big(\sigma\sigma^TD^2\psi(x)\big)=\sum_{i=1}^P \sigma_i^T D^2\psi(x) \sigma_i
  =\sum_{i=1}^P (\sigma_i^TD)^2\psi(x)
\end{equation}
where $\sigma=(\sigma_1,....,\sigma_P)\in\R^{N\times P}$ and $\sigma_i\in \R^N$.
For $\eta>0$, we approximate $L^\sigma$ by
\begin{equation}\label{AproxLoc}
L^\sigma_\eta[\psi](x):=\sum_{i=1}^P \frac{\psi(x+\sigma_i \eta) + \psi(x-\sigma_i \eta)-2\psi(x)}{\eta^2}.
\end{equation}
In general $x+\sigma_i\eta \not \in \Grid$, not even when $x\in
\Grid$, and hence this discretization is in the form \eqref{FD} only for special
choices of $\sigma$.
We can overcome this problem by replacing $\psi$ by its interpolant on $\Grid$,
\begin{equation}\label{AproxLoc2}
L^\sigma_{\eta,h}[\psi](x)=\sum_{i=1}^P \frac{I_h^1[\psi](x+\sigma_i \eta) + I_h^1[\psi](x-\sigma_i \eta)-2\psi(x)}{\eta^2}
\end{equation}
where $I^1_h$ denotes the first order Lagrange interpolation defined
in \eqref{eq:LagInterp} for $k=1$. This type of discretizations have
been studied before e.g. in \cite{CaFa95,DTEnJa17a}.

\begin{remark}
\begin{enumerate}[(a)]
\item If $\eta=h$ and $\sigma_i=e_i$, the standard
  basis in $\R^N$, then
  $$L^\sigma=\Delta\qquad \text{and}\qquad L^\sigma_{h}=L^\sigma_{h,h}= \Delta_h,$$
where $\Delta_h$ is the classical second order finite difference
approximation
\begin{equation}\label{eq:discLap}
 \Delta_h\psi(x):=\sum_{i=1}^N\frac{\psi(x+he_i)-2\psi(x)+\psi(x-he_i)}{h^2}.
  \end{equation}

\smallskip 
\item By a coordinate transformation $x=Ay$ (diagonalization),
  $L^\sigma$ can always be transformed into
\begin{equation*}
L^{I_0}\qquad\text{where}\qquad I_0:=\left[\begin{array}{c|c}
I & 0\\
\hline
0 & 0\\
\end{array}\right]\in \R^{N\times N},
\end{equation*}
where $I$ is an identity matrix. $L^{I_0}$ corresponds to a Laplacian
operator in $\R^M$ for some $M\leq N$. In the transformed coordinates
$y$, $L^{I_0}=\Delta_{\R^M}$, the $\R^M$-Laplacian, for some $M\leq N$, and again $L^{I_0}_h=L^{I_0}_{h,h}=\Delta_{\R^M,h}$ (see \cite{DTEnJa17a}).
\end{enumerate}
\end{remark}

We have the following general result.
\begin{lemma}\label{discretelocal}
Let $h,\eta>0$, $h=o(\eta)$, and $L^\sigma$ be defined by \eqref{deflocaldiffusion}. The family of operators $\{L^\sigma_{\eta,h}\}_{\eta,h>0}$ given by \eqref{AproxLoc2}  is an \textbf{admissible} approximation of $L^\sigma$  with \textbf{Local Truncation Error} $O(\frac{h^2}{\eta^2}+\eta^2 )$ (or $O(h)$ with the optimal $\eta=\sqrt{h}$).
\end{lemma}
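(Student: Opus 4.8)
The plan is to verify the three requirements of Definition \ref{def:suitdisc} together with the stated error bound, by treating the approximation as a composition of two steps: the directional central difference operator $L^\sigma_\eta$ of \eqref{AproxLoc}, and the replacement of exact point values by the interpolant $I_h^1$ in \eqref{AproxLoc2}. Accordingly I would split the error via the triangle inequality as
\[
L^\sigma - L^\sigma_{\eta,h} = \big(L^\sigma - L^\sigma_\eta\big) + \big(L^\sigma_\eta - L^\sigma_{\eta,h}\big)
\]
and estimate the two pieces separately; admissibility conditions (i) and (iii) then follow from Lemma \ref{lemma:suitdisc} and Proposition \ref{prop:equivcons}.

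For condition (i), the point is to exhibit $L^\sigma_{\eta,h}$ in the form \eqref{FD} at a grid node $x_\gamma\in\Grid$ with symmetric nonnegative weights. Using the translation invariance $p_\beta^1(z)=p_0^1(z-x_\beta)$ of the interpolation basis and reindexing $\beta=\gamma+\delta$, I would rewrite $I_h^1[\psi](x_\gamma\pm\sigma_i\eta)=\sum_\delta \psi(x_\gamma+x_\delta)\,p_0^1(\pm\sigma_i\eta-x_\delta)$, which recasts \eqref{AproxLoc2} as $\sum_{\delta}\psi(x_\gamma+x_\delta)\,\omega_\delta - 2P\eta^{-2}\psi(x_\gamma)$ with $\omega_\delta=\sum_{i=1}^P\eta^{-2}\big(p_0^1(\sigma_i\eta-x_\delta)+p_0^1(-\sigma_i\eta-x_\delta)\big)$. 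Nonnegativity $\omega_\delta\geq0$ is immediate since $p_0^1\geq0$; the symmetry $\omega_\delta=\omega_{-\delta}$ follows from the coordinatewise evenness of $p_0^1$ noted in Section~\ref{subsec:interp}; and finite total mass holds because $p_0^1$ is compactly supported, so only finitely many $\delta$ contribute. Since $I_h^1$ reproduces constants (by $\sum_\delta p_\delta^1\equiv1$), the operator $L^\sigma_{\eta,h}$ annihilates constants, so the $\delta=0$ contribution together with the $-2P\eta^{-2}\psi(x_\gamma)$ term collapses into exactly the diagonal $-\psi(x_\gamma)\sum_{\delta\neq0}\omega_\delta$ required by \eqref{FD}. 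Lemma \ref{lemma:suitdisc} then gives membership in the class \eqref{nuas}.

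For condition (ii) and the error bound, I would bound the first piece $L^\sigma-L^\sigma_\eta$ by Taylor expanding each symmetric second difference along $\sigma_i$; the first-order terms cancel and, since $\psi\in C_c^\infty$, the integral remainder divided by $\eta^2$ is $O(\eta^2)$ in $L^1$, controlled by $\|D^4\psi\|_{L^1}$. For the second piece I would use that $L^\sigma_\eta-L^\sigma_{\eta,h}$ is a sum of terms $\eta^{-2}\big(I_h^1[\psi]-\psi\big)(\cdot\pm\sigma_i\eta)$; the interpolation error \eqref{eq:LagInterpError} with $p=1$, together with translation invariance of the $L^1$-norm, yields $O(h^2/\eta^2)$. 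Summing gives the Local Truncation Error $O(h^2/\eta^2+\eta^2)$, and minimizing in $\eta$ produces the optimal $\eta=\sqrt h$ with rate $O(h)$; consistency follows because $h=o(\eta)$ and $\eta\to0$ force both terms to zero.

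Condition (iii) I would obtain essentially for free: rerunning the same two estimates in $L^\infty$ (using \eqref{eq:LagInterpError} with $p=\infty$) gives the pointwise convergence \eqref{eq:pointwiseconv}, and since $L^\sigma$ corresponds to $\mu=0$, which trivially satisfies \eqref{muas}, Proposition \ref{prop:equivcons} yields \eqref{eq:unifLevy2}, equivalently \eqref{eq:unifLevy}. Alternatively one can compute directly that the second moment $\sum_{\delta\neq0}|x_\delta|^2\omega_\delta\to 2\sum_{i=1}^P|\sigma_i|^2$ as $h/\eta\to0$, using that linear interpolation reproduces second moments up to $O(h^2)$ and that the stencil shrinks so that $|x_\delta|^2\wedge1=|x_\delta|^2$. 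The hard part is the bookkeeping in step (i): carefully reindexing the interpolation sum and extracting the symmetry $\omega_\delta=\omega_{-\delta}$ for arbitrary, possibly non-axis-aligned, directions $\sigma_i$, since everything downstream—monotonicity, membership in \eqref{nuas}, and applicability of Proposition \ref{prop:equivcons}—rests on the weights being genuinely nonnegative and symmetric.
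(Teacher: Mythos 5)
Your proposal is correct and follows essentially the same route as the paper's proof: the same rewriting of \eqref{AproxLoc2} into the form \eqref{FD} by reindexing the interpolation sum and reading off the weights $\frac{1}{\eta^2}\sum_{0<|i|\leq P}p_0^1(\sigma_i\eta-z_\beta)$ (with symmetry and nonnegativity from the evenness and positivity of $p_0^1$), the same splitting of the consistency error into the classical $O(\eta^2)$ central-difference term plus the $O(h^2/\eta^2)$ interpolation term via \eqref{eq:LagInterpError}, and the same appeal to Proposition \ref{prop:equivcons} through the $L^\infty$ versions of these estimates. The only cosmetic difference is that the paper bounds $\sum_{\beta\neq0}\omega_{\eta,h,\beta}\leq 2P/\eta^2$ explicitly using the partition of unity $\sum_\beta p_\beta^1\equiv 1$, while you invoke the compact support of $p_0^1$; both suffice.
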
 
Lemma \ref{discretelocal} is close to results e.g. in
\cite{DeJa13a}, but below we give a proof for completeness. 
For the discretization $\Delta_h$ we have the following result.
\begin{lemma}
Let $h>0$.  The family of operators $\{\Delta_h\}_{h>0}$ given by \eqref{eq:discLap}  is an \textbf{admissible} approximation of $\Delta$  with \textbf{Local Truncation Error} $O(h^2)$.
\end{lemma}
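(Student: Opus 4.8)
The plan is to cast $\Delta_h$ in the form \eqref{FD} and then verify directly the three conditions of Definition \ref{def:suitdisc}, reading off the Local Truncation Error from a Taylor expansion. Writing $-2\psi(x)=-\psi(x)-\psi(x)$ and regrouping, \eqref{eq:discLap} becomes
\[
\Delta_h\psi(x)=\sum_{i=1}^N\frac{\big(\psi(x+he_i)-\psi(x)\big)+\big(\psi(x-he_i)-\psi(x)\big)}{h^2},
\]
which is exactly \eqref{FD} with stencil $\mathcal S=\{\pm he_i\}_{i=1}^N$ and weights $\omega_{\pm e_i,h}=h^{-2}$ (all other weights zero). In particular $\Delta_h=L^\sigma_{h,h}$ with $\sigma_i=e_i$; here no interpolation is needed since $x\pm he_i\in\Grid$ whenever $x\in\Grid$, so the interpolation term $O(h^2/\eta^2)$ present in Lemma \ref{discretelocal} drops out and only the pure consistency error survives.

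For condition (i), the stencil points come in symmetric pairs $z_{e_i}=-z_{-e_i}=he_i$ carrying equal nonnegative weights $h^{-2}$, and there are only $2N$ of them, so $\sum_{\beta\neq0}\omega_{\beta,h}=2Nh^{-2}<\infty$; by Lemma \ref{lemma:suitdisc} the family lies in the class \eqref{nuas}. For the uniform L\'evy condition (iii), note that for $h<1$ every stencil point satisfies $|z_\beta|^2=h^2<1$, hence $|z_\beta|^2\wedge1=h^2$ and
\[
\sum_{\beta\neq0}(|z_\beta|^2\wedge1)\,\omega_{\beta,h}=2N\,h^2\,h^{-2}=2N,
\]
so the supremum over $h<1$ equals $2N<\infty$ and \eqref{eq:unifLevy} holds. (Alternatively, once consistency is established one may simply invoke Proposition \ref{prop:equivcons}.)

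It remains to establish consistency (ii) together with the $O(h^2)$ rate. For $\psi\in C_\textup{c}^\infty(\R^N)$ I would use the fourth-order Taylor expansion of $\psi(x\pm he_i)$ about $x$: the odd-order terms cancel in the symmetric combination, leaving, in integral-remainder form,
\[
\frac{\psi(x+he_i)-2\psi(x)+\psi(x-he_i)}{h^2}-\partial_{x_i}^2\psi(x)=\frac{h^2}{6}\int_0^1(1-s)^3\big(\partial_{x_i}^4\psi(x+she_i)+\partial_{x_i}^4\psi(x-she_i)\big)\dd s.
\]
Summing over $i$ and integrating in $x$, Fubini together with the translation invariance of Lebesgue measure yields
\[
\|\Delta_h\psi-\Delta\psi\|_{L^1(\R^N)}\leq C\,h^2\,\|D^4\psi\|_{L^1(\R^N)},
\]
which both proves the $L^1$-consistency required in Definition \ref{def:suitdisc}(ii) and identifies the Local Truncation Error as $O(h^2)$.

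As for difficulty, there is no genuine obstacle: the result is a special, interpolation-free instance of Lemma \ref{discretelocal}. The only point demanding a little care is that consistency is measured in $L^1$ rather than pointwise, so one should keep the remainder in the integral form above and control it using that $\psi$ (hence each $\partial_{x_i}^4\psi$) has compact support, which guarantees that $\|D^4\psi\|_{L^1}$ is finite and that the constant $C$ is independent of $h$.
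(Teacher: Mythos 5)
Your proof is correct and follows essentially the same route as the paper, which simply observes that admissibility and the $O(h^2)$ rate follow as in the proof of Lemma \ref{discretelocal} once the interpolation step (and hence the $O(h^2/\eta^2)$ term) is removed. Your explicit verification of \eqref{eq:unifLevy} and the fourth-order Taylor expansion with integral remainder are exactly the ingredients underlying that argument (cf.\ \eqref{eq:newineq}), so there is nothing to add.
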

Admissibility and the improved (and classical!) rate follows as in the
proof of Lemma \ref{discretelocal} since there is no interpolation
now. 

\begin{proof}[Proof of Lemma \ref{discretelocal}] 
  We show that $L^\sigma_{\eta,h}$ can be written in the finite
  difference form \eqref{FD}. For simplicity, let us set
  $\sigma_{-i}=-\sigma_i$ and let $x_\alpha\in \Grid$. Then
\begin{equation*}
\begin{split}
L^\sigma_{\eta,h}[\psi](x_\alpha)&=\sum_{i=1}^P \frac{I_h^1[\psi](x_\alpha+\sigma_i \eta) + I_h^1[\psi](x_\alpha-\sigma_i \eta)-2\psi(x_\alpha)}{\eta^2}\\
&=\sum_{0<|i|\leq P}\left(\sum_{\beta}\psi(z_\beta)p_0^1(x_\alpha+\sigma_i \eta-z_\beta)- \psi(x_\alpha) \right) \frac{1}{\eta^2}\\
&=\sum_{0<|i|\leq P}\left(\sum_{\gamma}\psi(x_\alpha+z_\gamma)p_{0}^1(\sigma_i \eta-z_\gamma)- \psi(x_\alpha) \right) \frac{1}{\eta^2}\\
&= \sum_{\gamma\neq0}\left(\psi(x_\alpha+ z_\gamma)-\psi(x_\alpha)\right) \left(\frac{1}{\eta^2}\sum_{0<|i|\leq P} p_0^1(\sigma_i \eta-z_\gamma)\right).
\end{split}
\end{equation*}
The weights are $\omega_{\eta,h,\beta}=\frac{1}{\eta^2}\sum_{0<|i|\leq P}
p_0^1(\sigma_i \eta-z_\beta)$, and we immediately find that  $0\leq
\omega_{\eta,h,\beta}=\omega_{\eta,h,-\beta}$ since $0\leq p^1_0$ is
even, $\sigma_{-i}=-\sigma_i$, and $z_{-\beta}=-z_{\beta}$. Moreover, 
\begin{equation*}
\begin{split}
\sum_{\beta\not=0} \omega_{\eta,h,\beta}=\frac{1}{\eta^2} \sum_{0<|i|\leq P} \sum_{\beta\not=0} p_\beta^1(\sigma_i \eta)=\frac{1}{\eta^2} \sum_{0<|i|\leq P} (1-p_0^1(\sigma_i \eta))\leq \frac{2P}{\eta^2}<+\infty.
\end{split}
\end{equation*}
To show consistency we split the error in the following way,
\[
\|L^\sigma\psi-L^\sigma_{\eta,h}[\psi]\|_{L^1(\R^N)}\leq\|L^\sigma\psi-L^\sigma_{\eta}[\psi]\|_{L^1(\R^N)}+\|L^\sigma_{\eta}[\psi]-L^\sigma_{\eta,h}[\psi]\|_{L^1(\R^N)}
\]
where $L^\sigma_{\eta}[\psi]$ is given by \eqref{AproxLoc}. The first
term on the right hand side is the classical error of a second order
approximation of second derivatives,
\begin{equation}\label{eq:newineq}
\|L^\sigma\psi-L^\sigma_{\eta}[\psi]\|_{L^1(\R^N)}\leq C\|D^4\psi\|_{L^1(\R^N)}\eta^2|\sigma|^4 .
\end{equation}
For the second term, we have (cf. \eqref{eq:LagInterpError})
\begin{equation*}
\begin{split}
\|L^\sigma_{\eta}[\psi]-L^\sigma_{\eta,h}[\psi]\|_{L^1(\R^N)}&=\frac{1}{\eta^2}\left\| \sum_{0<|i|\leq P} \left(I_h^1[\psi](\cdot+\sigma_i \eta)- \psi(\cdot+\sigma_i\eta) \right)\right\|_{L^1(\R^N)}\\
&\leq\frac{1}{\eta^2} \sum_{0<|i|\leq P} \left\|I_h^1[\psi](\cdot+\sigma_i \eta)- \psi(\cdot+\sigma_i\eta)\right\|_{L^1(\R^N)}\\
&=\frac{1}{\eta^2}\sum_{0<|i|\leq P}  C \|D^{2}\psi\|_{L^1(\R^N)}h^{2}= O\left(\frac{h^2}{\eta^2}\right).
\end{split}
\end{equation*}
Thus, for any choice of $h=o(\eta)$ we get a consistent
scheme. Moreover, one can observe that the last two estimates also hold in
$L^\infty$ by a trivial adaptation. By Proposition \ref{prop:equivcons}
it then follows that the uniform integrability condition
\eqref{eq:unifLevy} holds. In view of Lemma \ref{lemma:suitdisc}, the scheme
is admissible (cf. Definition \ref{def:suitdisc}) and the proof is complete.
\end{proof}

%%%%%%%%%%%%%%%%%%%%%%%%%%%%%%%%%%%%%%%%%%%%%%%%%%%%%%%%%%

\subsection{Discretizations of the singular nonlocal operator $\Levy_{r}^\mu$}

We present discretizations of the
singular/unbounded part of the nonlocal operator (recall Remark \ref{deflevyexplained}) 
\begin{equation}\label{eq:singpart}
\Levy_{r}^\mu[\psi](x)=\int_{0<|z|\leq r}\Big(\psi(x+z)-\psi(x)-z\cdot
D\psi(x) \Big)\dd \mu(z), \quad r\in[0,1].
\end{equation}
We start with the {\em trivial discretization} where we discretize
$\Levy_{r}^\mu$ by   
\begin{equation}\label{eq:trivialDiscSing}
\Levy^h[\psi](x)	\equiv 0. 
\end{equation}
This crude discretization is  computationally efficient, and
depending on the order of the operator and the other discretizations
involved, the error could be satisfactory. 

\begin{lemma} Assume \eqref{muas}, $h\leq r =o_h(1)$, and
  $\Levy_{r}^\mu$ is given by \eqref{eq:singpart}. Then 
  $\{\Levy^h\}_{h>0}$ given by \eqref{eq:trivialDiscSing} is an
  \textbf{admissible} approximation of $\Levy_{r}^\mu$. Moreover, if
  also \eqref{alp} holds, then the \textbf{Local Truncation Error} is
  $O(r^{2-\alpha})$. 
\end{lemma}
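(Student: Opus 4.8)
The plan is to verify each of the three admissibility conditions of Definition \ref{def:suitdisc} for the trivial discretization $\Levy^h\equiv0$, now reading them with $\Levy_r^\mu$ (rather than $\mathfrak L$) as the target operator, and then to quantify the Local Truncation Error under the extra assumption \eqref{alp}. Two of the three conditions are immediate. For (i), the operator $\Levy^h\equiv0$ corresponds to the zero measure $\nu_h\equiv0$, which trivially satisfies \eqref{nuas}. For (iii), all weights $\omega_{\beta,h}$ vanish, so \eqref{eq:unifLevy} (equivalently \eqref{eq:unifLevy2} with $\nu_h\equiv0$) holds with value $0$.

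The only real content is the consistency condition (ii), which for the trivial discretization reduces to showing that $\|\Levy_r^\mu[\psi]\|_{L^1(\R^N)}\to0$ as $h\to0^+$. First I would Taylor expand to second order with the gradient term subtracted,
$$
\psi(x+z)-\psi(x)-z\cdot D\psi(x)=\int_0^1(1-t)\,z^TD^2\psi(x+tz)z\dd t,
$$
so that the integrand of $\Levy_r^\mu[\psi](x)$ is controlled pointwise by $\tfrac12|z|^2$ times a shifted Hessian. Next I would bound $|\Levy_r^\mu[\psi](x)|$ by the $\mu$-integral over $\{0<|z|\leq r\}$ of this quantity, integrate in $x$, and exchange the order of integration by Tonelli (all integrands being nonnegative). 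The key simplification is translation invariance of Lebesgue measure: for each fixed $t$ and $z$, $\int_{\R^N}\|D^2\psi(x+tz)\|\dd x=\|D^2\psi\|_{L^1(\R^N)}$. Performing the $t$-integration of $(1-t)$ then yields
$$
\|\Levy_r^\mu[\psi]\|_{L^1(\R^N)}\leq \tfrac12\|D^2\psi\|_{L^1(\R^N)}\int_{0<|z|\leq r}|z|^2\dd\mu(z).
$$

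To close the consistency argument I would invoke \eqref{muas}, which guarantees $\int_{|z|\leq1}|z|^2\dd\mu(z)<\infty$; since $r=o_h(1)\to0$, monotone (or dominated) convergence forces $\int_{0<|z|\leq r}|z|^2\dd\mu(z)\to0$, so the $L^1$ truncation error vanishes. For the rate under \eqref{alp}, I would simply substitute the sharp bound $\int_{|z|<r}|z|^2\dd\mu(z)\leq Cr^{2-\alpha}$ into the displayed estimate to obtain $\|\Levy_r^\mu[\psi]\|_{L^1(\R^N)}=O(r^{2-\alpha})$. I expect no genuine obstacle here; the one point requiring care is the Tonelli exchange combined with the translation-invariance step, which is precisely what upgrades the pointwise $L^\infty$ Hessian bound to a clean $L^1$ estimate that is insensitive to the detailed singularity structure of $\mu$ near the origin.
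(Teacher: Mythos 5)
Your proposal is correct and follows essentially the same route as the paper: the zero measure trivially gives conditions (i) and (iii), and consistency comes from the second-order Taylor bound $\|\Levy_r^\mu[\psi]\|_{L^1}\leq\tfrac12\|D^2\psi\|_{L^1}\int_{0<|z|\leq r}|z|^2\dd\mu(z)$ together with dominated convergence, with the rate $O(r^{2-\alpha})$ read off from \eqref{alp}. The only difference is that you spell out the Tonelli/translation-invariance step that the paper leaves implicit.
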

\begin{proof}
Since $\Levy^h=\Levy^{\nu_h}$ with $\nu_h\equiv0$, it is a
discretization of the form \eqref{FD} in the class
\eqref{nuas}. It is consistent by the dominated
convergence theorem,
\begin{equation*}
\begin{split}
\|\Levy_{r}^\mu[\psi]- \Levy^h[\psi]\|_{L^1(\R^N)}&=\|\Levy_{r}^\mu[\psi]\|_{L^1(\R^N)}\\
&\leq \frac{1}{2}\|D^2 \psi\|_{L^1(\R^N)}\int_{0<|z|\leq r}|z|^2\dd \mu(z)\to 0 \quad \text{ as  } \quad h\to 0^+.
\end{split}
\end{equation*}
If also \eqref{alp} holds, then the LTE is  $O(r^{2-\alpha})$. 
Moreover \eqref{eq:unifLevy} also holds since
$\sup_{h<1}\sum_{\beta\not=0} (|z_\beta|^2\wedge 1
)\omega_{\beta,h}=0<+ \infty$. 
\end{proof}

We now show how to get a more accurate discretization of $\Levy^\mu_r$
through the {\em adapted vanishing viscosity discretization}
\cite{AsRo01,CoTa04,JaKaLC08}: 
Approximate $\Levy_r^\mu$ by a local second order operator and then
discretize. 
To do this, note that by Taylor's theorem,
\begin{equation*}
\begin{split}
\psi(x+z)-\psi(x)-z\cdot D\psi(x)=\sum_{2\leq|\beta|\leq3}\frac{1}{\beta!}D^\beta\psi(x)z^\beta+ \sum_{|\beta|=4}R_\beta(x,z) z^\beta.
\end{split}
\end{equation*}
where $R_\beta(x,z)= \frac{|\beta|}{\beta!}
\int_0^1(1-s)^{|\beta-1|}D^\beta\psi(x+sz)\dd s$.
Since terms with
$|\beta|=3$ are odd and the measure $\mu$ is symmetric, 
\begin{equation*}
\begin{split}
  \Levy_{r}^\mu[\psi](x)
&=T_2^r(x)+0+R_4^r(x),
\end{split}
\end{equation*}
where $R_4^r(x)=\sum_{|\beta|=4}\int_{|z|<r} R_\beta(x,z)z^\beta\dd\mu(z)$,
\begin{gather*}
T_2^r(x)=\sum_{|\beta|=2}\frac{D^\beta\psi(x)}{\beta!}\int_{|z|<r}  z^\beta\dd
\mu(z)
=\frac{1}{2}\textup{tr}\big[ZD^2\psi(x)\big],
\end{gather*}
and $Z_{ij}=\int_{|z|<r}z_iz_j\dd \mu(z)$. Observe that $Z=Z(r)$  is a 
symmetric and positive semidefinite matrix, \(\xi^T Z \xi=\int_{|z|<r} \xi^T (zz^T) \xi\dd \mu(z)=\int_{|z|<r} (\xi\cdot z)^2 \dd \mu(z)\geq0
\)
for $\xi \in \R^N$, and that $Z(r) \to 0$ as $r \to 0$ by the dominated convergence theorem.  Hence it has a square root $\sqrt{Z}$ with entries
$(\sqrt{Z})_{ij}$ and columns $(\sqrt{Z})_{i}$ (our notation).
This allows us to write $T_2^r$ as a sum of directional derivatives
\begin{equation*}
  T_2^r(x)=\frac{1}{2}\sum_{i=1}^N ( (\sqrt{Z})_{i}\cdot D )^2\psi(x).
\end{equation*}
This is an operator of the form $L^\sigma$ with
$\sigma_i=(\sqrt{Z})_{i}$, $i=1,\ldots,N$ (cf. \eqref{localalt}),
and we discretize it as in Section \ref{sec:discLocal}:
\begin{equation}\label{eq:disSingHO}
\Levy^{\eta,h}[\psi](x)=\sum_{i=1}^P \frac{I_h^1[\psi](x+(\sqrt{Z})_{i}\eta) + I_h^1[\psi](x-(\sqrt{Z})_{i}\eta)-2\psi(x)}{\eta^2}
\end{equation}
Note that \eqref{eq:disSingHO} depends on $r$ through the terms
$(\sqrt{Z})_{i}$, and that (linear)
interpolation is needed since $x+(\sqrt{Z})_{i}\eta$ do not belong
to the grid $\Grid$ in general. 

\begin{lemma}
  \label{lem:gooddiscorig}
Assume \eqref{muas}, $h,\eta,r>0$, $h=o(\eta)$, $h\leq
r =o_h(1)$, and $\Levy^\mu_r$ is defined by
\eqref{eq:singpart}. Then the family 
$\{\Levy^{\eta,h}\}_{h,\eta>0}$ given by \eqref{eq:disSingHO}  is an
\textbf{admissible} approximation of $\Levy^\mu_r$  with \textbf{Local
  Truncation Error} $O(\frac{h^2}{\eta^2}+\eta^2|Z(r)|^2 +r^2)$.  Moreover, if
  also \eqref{alp} holds, then the LTE is
  $O(\frac{h^2}{\eta^2}+\eta^2r^{4-2\alpha}+r^{4-\alpha})$. 
\end{lemma}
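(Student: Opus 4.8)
The plan is to prove Lemma \ref{lem:gooddiscorig} by following the template of Lemma \ref{discretelocal}, since \eqref{eq:disSingHO} is literally the discretization $L^\sigma_{\eta,h}$ from Section \ref{sec:discLocal} applied with the directional vectors $\sigma_i = (\sqrt{Z})_i$. First I would record that, since $\Levy^{\eta,h} = L^{\sqrt{Z}}_{\eta,h}$ in the notation of \eqref{AproxLoc2}, admissibility in the sense of monotonicity and symmetry (weights $\omega_{\eta,h,\beta}=\omega_{\eta,h,-\beta}\geq 0$, with $\sum_{\beta\neq 0}\omega_{\eta,h,\beta}<\infty$) is immediate from the computation already carried out in the proof of Lemma \ref{discretelocal}; nothing new is needed there. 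The real content is therefore the Local Truncation Error estimate and the verification of \eqref{eq:unifLevy} via Proposition \ref{prop:equivcons}.

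For the LTE I would split the error into three pieces by inserting the intermediate operators $T_2^r$ (the local operator of type $L^{\sqrt{Z}}$) and its non-interpolated discretization $L^{\sqrt{Z}}_\eta$ from \eqref{AproxLoc}:
\begin{equation*}
\|\Levy^\mu_r[\psi]-\Levy^{\eta,h}[\psi]\|_{L^1}
\leq \|R_4^r\|_{L^1}
+\|T_2^r-L^{\sqrt{Z}}_\eta[\psi]\|_{L^1}
+\|L^{\sqrt{Z}}_\eta[\psi]-L^{\sqrt{Z}}_{\eta,h}[\psi]\|_{L^1}.
\end{equation*}
The first term is the Taylor remainder $R_4^r(x)=\sum_{|\beta|=4}\int_{|z|<r}R_\beta(x,z)z^\beta\dd\mu(z)$, which is bounded by $C\|D^4\psi\|_{L^1}\int_{|z|<r}|z|^4\dd\mu(z)$; under \eqref{muas} this vanishes as $r\to 0$, and under \eqref{alp} with $k=4$ it is $O(r^{4-\alpha})$. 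The third term is exactly the interpolation error estimated in Lemma \ref{discretelocal}, giving $O(h^2/\eta^2)$. The middle term is the classical second-order truncation error for approximating second derivatives by centered differences, which by \eqref{eq:newineq} (applied with $\sigma_i=(\sqrt{Z})_i$) is $O(\eta^2|\sqrt{Z}|^4)=O(\eta^2|Z(r)|^2)$; under \eqref{alp} one has $|Z(r)|\leq\int_{|z|<r}|z|^2\dd\mu(z)=O(r^{2-\alpha})$, so this becomes $O(\eta^2 r^{4-2\alpha})$. Summing the three pieces yields $O(\frac{h^2}{\eta^2}+\eta^2|Z(r)|^2+r^2)$ in general (absorbing the $r^{4-\alpha}$ term into the weaker $r^2$ bound that holds under \eqref{muas} alone) and $O(\frac{h^2}{\eta^2}+\eta^2 r^{4-2\alpha}+r^{4-\alpha})$ under \eqref{alp}.

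Consistency in $L^1$ follows since each of the three terms tends to $0$ as $h\to 0^+$ given the scaling constraints $h=o(\eta)$ and $h\leq r=o_h(1)$. The same estimates hold verbatim in $L^\infty$ by the standard trivial adaptation (replacing $L^1$ norms of derivatives by $L^\infty$ norms throughout, as noted in Lemma \ref{discretelocal}). Hence \eqref{eq:pointwiseconv} holds, and Proposition \ref{prop:equivcons} gives the uniform Lévy integrability \eqref{eq:unifLevy}. Together with Lemma \ref{lemma:suitdisc}, this establishes admissibility in the sense of Definition \ref{def:suitdisc}, completing the proof. The step requiring the most care is keeping track of the $r$-dependence hidden inside $Z(r)=\sqrt{Z}\sqrt{Z}^T$: one must confirm that $|Z(r)|$ scales like $\int_{|z|<r}|z|^2\dd\mu(z)$ so that the middle-term bound correctly produces the claimed powers of $r$ under \eqref{alp}, and that the Taylor remainder term is genuinely of higher order ($r^{4-\alpha}$, hence negligible against $r^2$) rather than contaminating the leading estimate.
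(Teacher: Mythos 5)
Your proposal is correct and follows essentially the same route as the paper: the same reduction to Lemma \ref{discretelocal} for the finite-difference form and the class \eqref{nuas}, the same Taylor-based splitting into the remainder $R_4^r$ plus the two discretization errors of $T_2^r$ (with the identical bounds $O(r^2)$, $O(\eta^2|Z|^2)$, $O(h^2/\eta^2)$ and the same refinement under \eqref{alp} via $|Z(r)|=O(r^{2-\alpha})$), and the same appeal to the $L^\infty$ version plus Proposition \ref{prop:equivcons} for \eqref{eq:unifLevy}. No substantive differences.
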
 
In general,  the optimal choice will turn
out to be 
$\eta=r=h^{\frac12}$ which gives a linear rate 
$O(h)$. If  also \eqref{alp} holds, the optimal choice
will turn out to be $r=h^{\frac{1}{2}}$ and $\eta=h^{\frac{\alpha}{4}}$ which
 gives the superlinear rate $O(h^{2-\frac\alpha2})$. The 
proof will be given below. Now we show how this rate can be improved
when the measure $\mu$ satisfies stronger symmetry assumptions.
\begin{definition}
  A Borel measure $\mu$ is \textbf{symmetric in coordinate
    directions} if $\mu(A)=\mu(T_{x_i}(
  A) )$ for all $i=1,\ldots, N$ and Borel sets $A$, where
  $T_{x_i}(A)=\{(x_1,\dots,-x_i,\dots,x_N):
  (x_1,\dots,x_i,\dots,x_N)\in A\}$.
  \end{definition}

\begin{remark}
Obviously radial  symmetry $\Rightarrow$ symmetry in coordinate
directions $\Rightarrow$ symmetry ($\mu(A)=\mu(-A)$). All reverse
implications are false. If e.g. $\dd\mu(z_1,z_2)=\sgn (z_1z_2)^+\dd z_1 \dd
z_2$, then $\mu$ is symmetric but not symmetric in coordinate
directions.
\end{remark}

If $\mu$ is symmetric in coordinate directions, then  mixed 
  derivatives vanish and
  $T_2^r=\frac12\sum_{i=1}^N\frac{\partial^2\psi}{\partial 
    x_i^2}\int_{|z|<r}z_i^2\dd\mu(z)$. A natural difference 
  approximation of $\Levy^\mu_r$ is then
\begin{equation}\label{eq:dissing2}
\Levy^h[\psi](x):=\frac{1}{2}\sum_{i=1}^N\frac{\psi(x+e_ih)+\psi(x-e_ih)-2\psi(x)}{h^2}\int_{|z|<r}z_i^2\dd \mu(z).
\end{equation}
If $\mu$ is also radially symmetric we even have $\int_{|z|<r}
z_i^2\dd \mu(z)=\frac{1}{N}\int_{|z|<r} |z|^2\dd \mu(z)$.
\begin{lemma}
Assume \eqref{muas}, $\mu$ is symmetric in coordinate directions,
$h,r>0$, $h\leq r=o_h(1)$, and $\Levy_{r}^\mu$ is
defined by  \eqref{eq:singpart}. Then the family $\{\Levy^{h}\}_{h>0}$
given by \eqref{eq:dissing2} is an \textbf{admissible} approximation
of $\Levy^\mu_r$  with \textbf{Local Truncation Error} $O(h^2+r^2)$. Moreover, if
  also \eqref{alp} holds, then the LTE is
  $O(h^2r^{2-\alpha}+r^{4-\alpha})$.
\end{lemma}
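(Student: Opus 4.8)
The plan is to follow the same template as the preceding lemmas in this subsection: first exhibit $\Levy^{h}$ as a finite difference operator of the form \eqref{FD}, conclude membership in the class \eqref{nuas} via Lemma \ref{lemma:suitdisc}, and then establish consistency together with the local truncation error and the uniform integrability bound \eqref{eq:unifLevy}. First I would set $a_i:=\int_{|z|<r}z_i^2\dd\mu(z)\geq0$ and rewrite \eqref{eq:dissing2} as
\[
\Levy^{h}[\psi](x)=\sum_{i=1}^N\frac{a_i}{2h^2}\Big(\big(\psi(x+e_ih)-\psi(x)\big)+\big(\psi(x-e_ih)-\psi(x)\big)\Big),
\]
i.e. a stencil $\{\pm e_ih\}_{i=1}^N$ with weights $\tfrac{a_i}{2h^2}$. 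These weights are nonnegative, symmetric (the points $\pm e_ih$ carry equal weight), and summable, since $\sum_{\beta\neq0}\omega_{\beta,h}=\tfrac{1}{h^2}\sum_{i=1}^N a_i=\tfrac{1}{h^2}\int_{|z|<r}|z|^2\dd\mu(z)<\infty$ by \eqref{muas}. Hence $\{\Levy^{h}\}_{h>0}$ lies in the class \eqref{nuas} by Lemma \ref{lemma:suitdisc}.

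For the truncation error I would insert the intermediate diagonal operator $T_2^r$. Because $\mu$ is symmetric in coordinate directions the mixed second-order Taylor coefficients vanish, so, as observed just before the statement, $T_2^r[\psi]=\tfrac12\sum_{i=1}^N a_i\,\partial_{x_i}^2\psi$, and the Taylor expansion of \eqref{eq:singpart} gives $\Levy_{r}^{\mu}[\psi]=T_2^r[\psi]+R_4^r[\psi]$ with $R_4^r[\psi](x)=\sum_{|\beta|=4}\int_{|z|<r}R_\beta(x,z)z^\beta\dd\mu(z)$. I then split
\[
\|\Levy_{r}^{\mu}[\psi]-\Levy^{h}[\psi]\|_{L^1}\leq\|R_4^r[\psi]\|_{L^1}+\|T_2^r[\psi]-\Levy^{h}[\psi]\|_{L^1}.
\]
The first term is handled by the bound $|R_\beta(x,z)|\leq C\sup_s|D^4\psi(x+sz)|$, giving $\|R_4^r[\psi]\|_{L^1}\leq C\|D^4\psi\|_{L^1}\int_{|z|<r}|z|^4\dd\mu(z)$. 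The second term is a sum of $N$ classical central-difference errors for the pure second derivatives $\partial_{x_i}^2\psi$, each of order $h^2$ with weight $a_i$, so $\|T_2^r[\psi]-\Levy^{h}[\psi]\|_{L^1}\leq C\|D^4\psi\|_{L^1}h^2\sum_{i=1}^N a_i=C\|D^4\psi\|_{L^1}h^2\int_{|z|<r}|z|^2\dd\mu(z)$. Under \eqref{muas}, $\int_{|z|<r}|z|^4\dd\mu\leq r^2\int_{|z|<r}|z|^2\dd\mu=O(r^2)$ and $\int_{|z|<r}|z|^2\dd\mu=O(1)$, yielding LTE $O(h^2+r^2)$; since $h\leq r=o_h(1)$ this also gives consistency. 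Under the extra assumption \eqref{alp}, the moment bounds $\int_{|z|<r}|z|^2\dd\mu\leq Cr^{2-\alpha}$ and $\int_{|z|<r}|z|^4\dd\mu\leq Cr^{4-\alpha}$ upgrade the two pieces to $O(h^2r^{2-\alpha})$ and $O(r^{4-\alpha})$, i.e. LTE $O(h^2r^{2-\alpha}+r^{4-\alpha})$. The same estimates hold in $L^\infty$ by a trivial adaptation.

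For \eqref{eq:unifLevy} I would argue directly rather than invoke Proposition \ref{prop:equivcons}: for $h<1$ every stencil point has $|z_\beta|^2=h^2<1$, so $\sum_{\beta\neq0}(|z_\beta|^2\wedge1)\omega_{\beta,h}=h^2\sum_{\beta\neq0}\omega_{\beta,h}=\int_{|z|<r}|z|^2\dd\mu(z)$, bounded uniformly in $h$ (and in $r$) by \eqref{muas}; this finishes admissibility in the sense of Definition \ref{def:suitdisc}. The computations are otherwise routine within the established framework, and I do not expect a genuine obstacle; the one point meriting care is that the symmetry-in-coordinate-directions hypothesis is exactly what makes $T_2^r$ diagonal, so that the nearest-neighbour axis-aligned difference \eqref{eq:dissing2} already reproduces $T_2^r$ to second order (without it, surviving mixed derivatives could not be captured by this stencil). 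The only real bookkeeping is to keep the two distinct moments — $\int|z|^2$ for the difference error and $\int|z|^4$ for the Taylor remainder — separate, so that the correct powers of $r$ emerge in both the \eqref{muas} and \eqref{alp} regimes.
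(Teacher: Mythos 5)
Your proof is correct and follows essentially the same route the paper intends (it states only that the proof is similar to that of Lemma \ref{lem:gooddiscorig}): the same decomposition $\Levy_{r}^\mu[\psi]=T_2^r+R_4^r$, the same moment bounds in the \eqref{muas} and \eqref{alp} regimes, and the standard second-order central-difference estimate for the now-diagonal operator $T_2^r$. Your direct verification of \eqref{eq:unifLevy} in place of invoking Proposition \ref{prop:equivcons} is a harmless, slightly cleaner variation.
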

The optimal choice  will turn out to be 
$r=h$  which  gives a
quadratic rate $O(h^2)$ or 
superquadratic rate $O(h^{4-\alpha})$ if also \eqref{alp} holds.
The proof is similar to the proof of  Lemma \ref{lem:gooddiscorig}.
\begin{proof}[Proof of Lemma \ref{lem:gooddiscorig}]
Since $\Levy^{\eta,h}$ is in the form of \eqref{AproxLoc2}, it is
already in finite difference form \eqref{FD}. By Lemma
\ref{discretelocal} it is in the class \eqref{nuas}. We now check consistency.
\[
\|\Levy_{r}^\mu[\psi]-\Levy^{\eta,h}[\psi]\|_{L^1(\R^N)}\leq \|T_2^r-\Levy^{\eta,h}[\psi]\|_{L^1(\R^N)}+ \|R_4^r\|_{L^1(\R^N)},
\]
and by Lemma \ref{discretelocal} and inequality  \eqref{eq:newineq}  with $\sigma_i=(\sqrt Z)_i$,  the definition of
$R_4^r$, and Fubini,
\begin{equation*}
\begin{split}
\|T_2^r-\Levy^{\eta,h}[\psi]\|_{L^1(\R^N)}&\leq
  C\|D^2\psi\|_{L^1(\R^N)}\frac{h^2}{\eta^2}+C\|D^4\psi\|_{L^1(\R^N)}\eta^2|Z|^2 \\
  &=O\Big(\frac{h^2}{\eta^2}+\eta^2|Z|^2 \Big), \\
\|R_4^r\|_{L^1(\R^N)}&\leq C  \|D^4 \psi\|_{L^1(\R^N)}\int_{|z|<r} |z|^4 \dd \mu(z) = O(r^2),
\end{split}
\end{equation*}
leading to the desired Local Truncation Error. Since $|Z_i|\leq C\int_{|z|<r}|z|^2\dd \mu(z)$, the modifications when also
\eqref{alp} holds are obvious. Once again, the estimates also hold 
in $L^\infty$ with a trivial adaptation and thus Proposition
\ref{prop:equivcons} ensures that \eqref{eq:unifLevy} is satisfied. 
\end{proof}

%%%%%%%%%%%%%%%%%%%%%%%%%%%%%%%%%%%%%%%%%%%%%%%%%%%%%%

\subsection{Discretizations of the bounded nonlocal operator
  $\Levy^{\mu,r}$} We present discretization of the
bounded/nonsingular part of the nonlocal operator, i.e. of
\begin{equation}\label{nonsingop}
\Levy^{\mu,r}[\psi](x):=\int_{|z|>r}\big(\psi(x+z)-\psi(x)\big)\dd
\mu(z),\qquad r\in[0,1].
\end{equation}
Note that $\Levy^{\mu,r}$ is an operator in the form \eqref{muas} with measure $\dd\mu_r(z)=  \indik_{|z|>r}\dd\mu(z)$.

%%%%%%%%%%%%%%%%%%%%%%%%%%%%%%%%%%%%%%%%%%%%%%%%%%%%%%%%%%

\subsubsection{Midpoint quadrature rule}

The idea is to approximate $\psi$ on each cube $z_\beta+R_h$ by its
midpoint value $\psi (z_\beta)$. 
This gives the following quadrature formula for the cube,
 $$\int_{z_\beta+R_h}\big(\psi(x+z)-\psi(x)\big)\dd \mu(z)\simeq
(\psi(x+z_\beta)-\psi(x))\mu(z_\beta+R_h),$$
and a discretization of $\Levy^{\mu,r}$ given by
\begin{equation}\label{sillydisc}
\Levy^h[\psi](x)=\sum_{\beta\not=0} \left(\psi(x+z_\beta)-\psi(x)\right) \mu \left(z_\beta+R_h\right).
\end{equation}

\begin{lemma}\label{discretenonlocal}
Assume \eqref{muas}, $h\leq r=o_h(1)$, and $\Levy^{\mu,r}$ is defined
by \eqref{nonsingop}. The family $\{\Levy^h\}_{h>0}$
given by \eqref{sillydisc} is an \textbf{admissible} approximation of
$\Levy^{\mu,r}$. Moreover, if
  also \eqref{alp} holds, then the \textbf{Local Truncation Error} is
  $O(h+r^{2-\alpha})$. 
\end{lemma}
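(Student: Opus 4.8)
The plan is to follow the template of the earlier discretization lemmas: first exhibit \eqref{sillydisc} as a finite-difference operator $\Levy^{\nu_h}$ in the class \eqref{nuas}, then establish $L^1$-consistency with an explicit rate, and finally upgrade to the uniform L\'evy bound \eqref{eq:unifLevy} via Proposition \ref{prop:equivcons}. From \eqref{sillydisc} I read off that $\Levy^h=\Levy^{\nu_h}$ with $\nu_h=\sum_{\beta\neq0}\omega_{\beta,h}\delta_{z_\beta}$ and weights $\omega_{\beta,h}=\mu(z_\beta+R_h)$, so it is already of the form \eqref{FD}. Nonnegativity is immediate from $\mu\geq0$; the symmetry $\omega_{\beta,h}=\omega_{-\beta,h}$ follows from the symmetry of $\mu$ together with $z_{-\beta}=-z_\beta$ and the symmetry of the reference cell $R_h$ (the half-open boundary carries no net $\mu$-mass in the symmetric difference); and since the cubes $\{z_\beta+R_h\}_{\beta\neq0}$ tile $\R^N\setminus R_h$ we get $\sum_{\beta\neq0}\omega_{\beta,h}=\mu(\R^N\setminus R_h)<\infty$ by \eqref{muas}, the mass near the origin being controlled by $\int_{|z|\leq1}|z|^2\dd\mu$ and the tail by $\int_{|z|>1}\dd\mu$. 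By Lemma \ref{lemma:suitdisc} this places $\{\Levy^h\}_{h>0}$ in the class \eqref{nuas}.

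For consistency I would compare both operators with the exact truncated integral $\int_{\R^N\setminus R_h}(\psi(\cdot+z)-\psi(\cdot))\dd\mu$, splitting the error into a \emph{domain-mismatch} part and a \emph{quadrature} part. The domain-mismatch term is $\int_{(\R^N\setminus R_h)\cap\{|z|\leq r\}}(\psi(x+z)-\psi(x))\dd\mu$; as this region is symmetric, I symmetrize using the symmetry of $\mu$ and apply a second-order Taylor expansion to bound it in $L^1$ by $\tfrac12\|D^2\psi\|_{L^1}\int_{|z|\leq r}|z|^2\dd\mu$, which is $o(1)$ under \eqref{muas} and $O(r^{2-\alpha})$ under \eqref{alp}. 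The quadrature part is $\sum_{\beta\neq0}\int_{z_\beta+R_h}(\psi(x+z_\beta)-\psi(x+z))\dd\mu$; expanding $\psi(x+z)$ about $x+z_\beta$, the integral form of the second-order remainder contributes $O(h^2)\,\mu(\R^N\setminus R_h)=O(h^{2-\alpha})$ in $L^1$, while the first-order term is $-\sum_{\beta\neq0}D\psi(x+z_\beta)\cdot m_\beta$ with cell first moments $m_\beta=\int_{z_\beta+R_h}(z-z_\beta)\dd\mu$.

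The main obstacle is precisely this first-order term: the naive bound $\|D\psi\|_{L^1}\sum_\beta|m_\beta|\lesssim h\,\mu(\R^N\setminus R_h)=O(h^{1-\alpha})$ blows up for $\alpha\geq1$, so the stated rate can only be recovered by exploiting symmetry. The key point is that $m_{-\beta}=-m_\beta$ (again by symmetry of $\mu$ and of $R_h$), so pairing $\beta$ with $-\beta$ replaces the factor $D\psi(x+z_\beta)$ by the difference $D\psi(x+z_\beta)-D\psi(x-z_\beta)=O(|z_\beta|\,\|D^2\psi\|)$, gaining one power of $|z_\beta|$. I would then estimate $\sum_{\beta\neq0}|z_\beta|\,\omega_{\beta,h}$ by splitting the stencil: on $|z_\beta|\leq1$ this is comparable to $\int_{\R^N\setminus R_h,\,|z|\leq1}|z|\dd\mu$, which is controlled dyadically by \eqref{alp}, and on $|z_\beta|>1$ the finite tail mass $\mu(\{|z|>\tfrac12\})<\infty$ handles the contribution directly; together they yield $O(h+r^{2-\alpha})$.

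Finally, every one of the above bounds holds verbatim with $L^1$ replaced by $L^\infty$, so Proposition \ref{prop:equivcons} gives the uniform integrability condition \eqref{eq:unifLevy}, and with Lemma \ref{lemma:suitdisc} the family $\{\Levy^h\}_{h>0}$ is admissible. Combining the domain-mismatch and quadrature estimates then delivers the Local Truncation Error $O(h+r^{2-\alpha})$ under \eqref{alp}, as claimed.
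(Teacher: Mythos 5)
A preliminary remark: the paper does not prove Lemma \ref{discretenonlocal} at all---it defers to Lemmas 5.3 and 5.4 of \cite{DTEnJa17a}---so there is no in-paper argument to compare yours against. Your strategy (domain-mismatch term plus cell-by-cell quadrature term, with the first moments $m_\beta$ tamed by pairing $\beta$ with $-\beta$) is the natural one and correctly identifies where the difficulty sits. Two steps, however, do not close as written.

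First, the antisymmetry $m_{-\beta}=-m_\beta$---and already the symmetry $\omega_{\beta,h}=\omega_{-\beta,h}$ that you need to invoke Lemma \ref{lemma:suitdisc}---rests on your parenthetical claim that the half-open boundary of $R_h=h(-\tfrac12,\tfrac12]^N$ carries no $\mu$-mass. That claim is doing real work and is false for a general symmetric Radon measure: $-(z_{-\beta}+R_h)=z_\beta-R_h$ differs from $z_\beta+R_h$ on the faces $z_\beta+h\,\partial[-\tfrac12,\tfrac12]^N$, which $\mu$ may charge. Concretely, in $N=1$ take $\mu=\sum_k c_k(\delta_{a_k}+\delta_{-a_k})$ with $a_k=\tfrac32 h_0 4^{-k}$ and $c_k=a_k^{-\alpha}$; one checks that \eqref{alp} holds, yet for $h=h_04^{-m}$ the atoms with $k<m$ sit exactly at grid points while the atom at $\tfrac32 h$ falls in the cube centred at $h$ and the atom at $-\tfrac32 h$ in the cube centred at $-2h$. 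Hence $\omega_{1,h}=c_m\neq 0=\omega_{-1,h}$, the two displacements are both $+h/2$ rather than opposite, and the resulting first-order error has $L^1$-size of order $c_m h\sim h^{1-\alpha}$, which destroys the claimed rate for $\alpha>1$ (and, with a different choice of $c_k$, even consistency). So you must either add the hypothesis that $\mu$ does not charge the grid hyperplanes (automatic for absolutely continuous $\mu$, hence for the fractional Laplacian used in the experiments) or symmetrize the weights by hand; as it stands this is a genuine gap, and it is in fact a gap in the literal statement of the lemma as well.

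Second, even granting the pairing, your final bookkeeping does not yield $O(h+r^{2-\alpha})$ at the borderline $\alpha=1$. The paired term on $|z_\beta|\le1$ is bounded by $C h\,\|D^2\psi\|_{L^1}\int_{h/2<|z|\lesssim 1}|z|\dd\mu(z)$, and the dyadic estimate from \eqref{alp} gives $O(1)$ for $\alpha<1$ and $O(h^{1-\alpha})$ for $\alpha>1$ (both fine, producing $O(h)$ and $O(h^{2-\alpha})\le O(r^{2-\alpha})$ respectively), but $O(\log(1/h))$ for $\alpha=1$, leaving $O(h\log(1/h))$, which is not $O(h+r^{2-\alpha})=O(h+r)$ when $r\sim h$. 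You should either exclude $\alpha=1$, accept a logarithmic correction there, or supply extra cancellation in $m_\beta$ itself, which requires more than \eqref{alp}. A smaller point: plain consistency under \eqref{muas} alone (no rate) also needs to be said explicitly; your cubewise bounds $h\min(1,|z_\beta|)\,\omega_{\beta,h}\le C(|z|^2\wedge1)\,\mu(z_\beta+R_h)/\mu(z_\beta+R_h)$-type domination do make the dominated convergence theorem applicable, but the argument is only implicit in your write-up.
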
 
The result was proved in Lemma 5.3 and 5.4 in \cite{DTEnJa17a}. Note
that since $\mu$ is not translation invariant as the
Lebesgue measure, the midpoint rule is no longer a second order
method in general. Moreover, the LTE is dominated by the contribution
from the integral over $|z|>1$ and can therefore not be improved by
assuming \eqref{alp}. 

\begin{remark}
Lemma \ref{discretenonlocal} is consistent with the  numerical experiments in Section
\ref{sec:numsim} for $\alpha>1$, while for $\alpha<1$ the
numerical results are better. Note that if $N=1$, 
$\alpha=1$ and $r=h$, then the midpoint rule coincides with the second
order discretization of Section \ref{sec:pdl} below. This time the
observed LTE is $O(h^2)$. 
\end{remark}

\subsubsection{Quadrature from interpolation -- general
  $\mu$}\label{sec:exactinterp} 
The idea is to replace $\psi(x+z)-\psi(x)$ by a Lagrange polynomial
interpolant on $\Grid$ (defined in Section \ref{subsec:interp}),
\[
I_h^k[\psi(x+\cdot)-\psi(x)](z)=\sum_{\beta\not=0} \left(\psi(x+z_\beta)-\psi(x)\right)p_\beta^k(z),
\]
and integrate with respect to the measure $\mu$ to obtain the
 discretization 
\begin{equation}\label{eq:discInterp}
\begin{split}
\Levy^h[\psi](x)&=\int_{|z|>r}I_h^k[\psi(x+\cdot)-\psi(x)](z) \dd \mu(z)
\\ &= \sum_{\beta\not=0} \left(\psi(x+z_\beta)-\psi(x)\right)\int_{|z|>r} p_\beta^k(z)\dd \mu(z).
\end{split}
\end{equation}
This is a classical idea that has been used for nonlocal operators
before, e.g. \cite{BiJaKa10,HuOb14}.

To have a more understandable presentation, we divide the proof of
admissibility into several lemmas. We begin by noting that the  
operator \eqref{eq:discInterp} obviously is in the finite difference
form  \eqref{FD} with symmetric weights  
\[
\omega_{\beta,h}=\int_{|z|>r} p_\beta^k(z)\dd \mu(z), \quad \beta \in \Z^N.
\] 
However, the nonnegativity of the weights is only guaranteed when
$k=0$ and $k=1$ since these are  the only cases where the basis functions
$p_\beta^k$ are all nonnegative.

\begin{lemma}\label{lem:interpIsDisc}
Assume \eqref{muas}, $h,r>0$, and $k=0$ or $k=1$. Then the family
$\{\Levy^h\}_{h>0}$ given by \eqref{eq:discInterp} is in the class \eqref{nuas}. 
\end{lemma}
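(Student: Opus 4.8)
The plan is to verify the three equivalent conditions of Lemma \ref{lemma:suitdisc}, since the operator \eqref{eq:discInterp} is manifestly of the finite difference form \eqref{FD} with stencil $z_\beta=h\beta\in\Grid$ and weights
\[
\omega_{\beta,h}=\int_{|z|>r}p_\beta^k(z)\dd\mu(z).
\]
The stencil symmetry $z_\beta=-z_{-\beta}$ is immediate on the uniform grid. It then remains to check (a) nonnegativity and symmetry of the weights and (b) summability $\sum_{\beta\neq0}\omega_{\beta,h}<+\infty$.

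For nonnegativity, I would use that for $k=0$ and $k=1$ the basis functions satisfy $p_\beta^k\geq0$, as noted just before the statement; since $\mu$ is a nonnegative measure by \eqref{muas}, each $\omega_{\beta,h}$ is the integral of a nonnegative function against a nonnegative measure, hence $\omega_{\beta,h}\geq0$. For the symmetry $\omega_{\beta,h}=\omega_{-\beta,h}$, I would first record the pointwise identity $p_{-\beta}^k(z)=p_\beta^k(-z)$, which follows from the translation structure $p_\gamma^k(z)=p_0^k(z-z_\gamma)$ together with the evenness of the reference function $p_0^k$ recorded in Section \ref{subsec:interp}. A change of variables $z\mapsto-z$, justified by the symmetry of $\mu$ under \eqref{muas}, then gives
\[
\omega_{-\beta,h}=\int_{|z|>r}p_\beta^k(-z)\dd\mu(z)=\int_{|z|>r}p_\beta^k(z)\dd\mu(z)=\omega_{\beta,h}.
\]

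For summability, the key tool is the partition of unity $\sum_{\beta}p_\beta^k(z)\equiv1$ from Section \ref{subsec:interp}. Since all terms are nonnegative, Tonelli's theorem allows interchanging the sum and the integral, and then
\[
\sum_{\beta\neq0}\omega_{\beta,h}=\int_{|z|>r}\sum_{\beta\neq0}p_\beta^k(z)\dd\mu(z)=\int_{|z|>r}\big(1-p_0^k(z)\big)\dd\mu(z)\leq\mu(\{|z|>r\}).
\]
It then suffices to observe that $\mu(\{|z|>r\})<+\infty$ for every fixed $r>0$: the contribution from $|z|>1$ is finite directly by \eqref{muas}, while the contribution from $r<|z|\leq1$ is bounded by $r^{-2}\int_{|z|\leq1}|z|^2\dd\mu(z)<+\infty$, again by \eqref{muas}. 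Combining this with Lemma \ref{lemma:suitdisc} finishes the argument.

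I do not expect a serious obstacle here, since this is a verification of hypotheses rather than a delicate estimate. The one step requiring slight care is the weight symmetry, where one must correctly combine the evenness of $p_0^k$ with the symmetry of $\mu$ through the change of variables; the summability, by contrast, reduces cleanly to the finiteness of $\mu(\{|z|>r\})$ via the partition of unity.
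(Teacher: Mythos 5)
Your proof is correct and follows essentially the same route as the paper: identify the weights $\omega_{\beta,h}=\int_{|z|>r}p_\beta^k(z)\dd\mu(z)$, get nonnegativity and symmetry from the properties of $p_\beta^k$ and $\mu$, and reduce summability to $\mu(\{|z|>r\})<\infty$ via the partition of unity and Tonelli. The only difference is that you spell out two points the paper leaves to the surrounding discussion or to the reader, namely the change-of-variables argument for $\omega_{\beta,h}=\omega_{-\beta,h}$ and the finiteness of $\mu(\{|z|>r\})$ for fixed $r>0$ under \eqref{muas}.
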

\begin{proof}
  In view of the above discussion and Lemma \ref{lemma:suitdisc}, it
  remains to check that
\[
\sum_\beta \omega_{\beta,h}= \sum_\beta \int_{|z|>r} p_\beta^k(z)\dd \mu(z)= \int_{|z|>r}  \sum_\beta p_\beta^k(z)\dd \mu(z)=\int_{|z|>r}  \dd \mu(z)<+\infty.
\]
The proof is complete.
\end{proof}

\begin{remark}
  \label{rem:pos}
Note that $p_\beta^k\geq0$ implies $\omega_{\beta,h}\geq0$, but the
other implication depends on $\mu$ and is not true in
general. But if $\mu$ were the Lebesgue measure supported on a cube,
then our quadrature would coincide with the Newton-Cotes quadratures
which are known to have nonnegative weights for orders
$k\leq6$. 

Moreover, if $\dd\mu(z)=\frac{\dd z}{|z|^{N+\alpha}}$
(fractional Laplace), then explicit  nonnegative weights are found and presented in a nice way in 
\cite{HuOb14} for $N=1$, $k=1$ and $k=2$. In this case quadratic interpolation combined with \eqref{eq:dissing2} yields an admissible
discretization of $-(-\Delta)^{\frac{\alpha}{2}}$ with a LTE of $O(h^{3-\alpha})$. Numerical
evidence for this rate is given in Section \ref{sec:numsim}.
\end{remark}

The following result on local truncation error is valid for any $k\in \N$.

\begin{lemma}\label{lemma:lagInterpCons}
Assume \eqref{muas}, $h\leq r=o_h(1)$, $k\geq0$, and $\Levy^{\mu,r}$
is defined by \eqref{nonsingop}. Then the family $\{\Levy^h\}_{h>0}$
given by \eqref{eq:discInterp} has {\bf Local
  Truncation Error}
%C_c^\infty(\R^N)$, 
\begin{equation}\label{eq:estimInterpMu}
\|\Levy^{\mu,r}[\psi]- \Levy^h[\psi]\|_{L^p}\leq
Ch^{k+1}\|D^{k+1}\psi\|_{L^p} \mu\left(\{|z|>r\right\}), \quad p\in\{1,\infty\}.
\end{equation}
Moreover, if
  also \eqref{alp} holds, then the LTE is
  $O(h^{k+1}r^{-\alpha})$.
\end{lemma}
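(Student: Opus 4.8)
The plan is to estimate the truncation error $\Levy^{\mu,r}[\psi]-\Levy^h[\psi]$ directly by exploiting that the quadrature \eqref{eq:discInterp} is obtained by replacing the integrand $\psi(x+\cdot)-\psi(x)$ by its Lagrange interpolant before integrating against $\mu_r$. Writing $\phi_x(z):=\psi(x+z)-\psi(x)$, the key observation is that
\[
\Levy^{\mu,r}[\psi](x)-\Levy^h[\psi](x)=\int_{|z|>r}\big(\phi_x(z)-I_h^k[\phi_x](z)\big)\dd\mu(z),
\]
so the pointwise error is simply the integral of the interpolation error against the measure $\mu_r=\indik_{|z|>r}\dd\mu(z)$. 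First I would justify pulling the difference operator inside and rewriting the error in this form, using that $I_h^k$ reproduces the constant (in $z$) term $-\psi(x)$ exactly, so only the $\psi(x+\cdot)$ piece contributes the interpolation error and $\phi_x-I_h^k[\phi_x]$ coincides with $\psi(x+\cdot)-I_h^k[\psi(x+\cdot)]$ up to this constant.

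Next I would bound the inner interpolation error pointwise by the standard estimate \eqref{eq:LagInterpError}, but in its \emph{local} form: on the uniform grid the interpolation error at a point depends only on $(k{+}1)$-st derivatives of $\psi$ on a neighbourhood of size $O(h)$, so $|\phi_x(z)-I_h^k[\phi_x](z)|\leq C h^{k+1}\sup_{|w-z|\leq Ch}|D^{k+1}\psi(x+w)|$. I would then take $L^p$-norms in $x$, apply Minkowski's integral inequality (or Fubini in the $L^1$ case) to interchange the $x$-norm with the $\mu_r$-integral, and use translation invariance of the Lebesgue measure to bound $\|D^{k+1}\psi(\cdot+w)\|_{L^p}=\|D^{k+1}\psi\|_{L^p}$ uniformly in $w$. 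This yields
\[
\|\Levy^{\mu,r}[\psi]-\Levy^h[\psi]\|_{L^p}\leq C h^{k+1}\|D^{k+1}\psi\|_{L^p}\int_{|z|>r}\dd\mu(z),
\]
which is exactly \eqref{eq:estimInterpMu} since $\int_{|z|>r}\dd\mu=\mu(\{|z|>r\})$.

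Finally, for the refinement under assumption \eqref{alp}, I would bound $\mu(\{|z|>r\})$ by splitting into $r<|z|<1$ and $|z|\geq 1$: the first piece is $O(r^{-\alpha})$ directly from \eqref{alp}, and the second is a finite constant by \eqref{muas}. Since $r=o_h(1)\to 0$, the $r^{-\alpha}$ term dominates, giving the claimed $O(h^{k+1}r^{-\alpha})$.

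The step I expect to be the main obstacle is the \emph{local} interpolation bound, i.e. passing from the global estimate \eqref{eq:LagInterpError} (stated as an $L^p$ bound on the whole error $I_h^k[\psi]-\psi$) to a pointwise-in-$z$ control of $|\phi_x(z)-I_h^k[\phi_x](z)|$ that can then be integrated against the possibly singular-near-$r$ measure $\mu_r$. Care is needed because the interpolation error is evaluated at the shifted argument $x+z$ while the $L^p$-norm is taken in $x$; the cleanest route is to fix the interpolation variable $z$, regard $z\mapsto\psi(x+z)$ as a function of $z$ interpolated on $\Grid$, and use the tensorial/uniform-grid structure (Section \ref{subsec:interp}) together with Fubini to keep the constant $C=C(k,p)$ uniform and the $(k{+}1)$-st derivative norm intact. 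One should also note that, as in the earlier lemmas, the estimate holds verbatim for both $p=1$ and $p=\infty$, so admissibility via Proposition \ref{prop:equivcons} and Lemma \ref{lemma:suitdisc} follows once consistency is established from \eqref{eq:estimInterpMu} by letting $h\to0^+$ (with $\mu(\{|z|>r\})$ staying finite by \eqref{muas}).
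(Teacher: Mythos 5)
Your proposal is correct and follows essentially the same route as the paper: the paper likewise writes the error as the $\mu$-integral over $\{|z|>r\}$ of the interpolation error of $\psi(x+\cdot)-\psi(x)$, interchanges the $L^p$-norm in $x$ with the integral in $z$, and invokes the interpolation estimate \eqref{eq:LagInterpError} to get $Ch^{k+1}\|D^{k+1}\psi\|_{L^p}\,\mu(\{|z|>r\})$, with the $O(h^{k+1}r^{-\alpha})$ refinement from \eqref{alp} exactly as you describe. Your extra care with the local (pointwise-in-$z$) form of the interpolation bound and translation invariance only makes explicit what the paper's two-line proof leaves implicit.
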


\begin{proof}
We use the notation $P_\psi(x,z)=I_h^k[\psi(x+\cdot)-\psi(x)](z)$ and use
\eqref{eq:LagInterpError} to get
\begin{equation*}
\begin{split}
  \|\Levy^{\mu,r}[\psi]- \Levy^h[\psi]\|_{L^p(\R^N)}
&\leq \int_{|z|>r}\| P_\psi(\cdot,z) -(\psi(\cdot+z)-\psi(\cdot))\|_{L^p(\R^N)} \dd \mu (z)\\
&\leq C \|D^{k+1}\psi\|_{L^p(\R^N)} h^{k+1} \int_{|z|>r}\dd \mu (z).
\end{split}
\end{equation*}
The proof is complete.
\end{proof}

To be consistent we need to impose that
$h^{k+1}\mu\left(\{|z|>r\right\})\to 0$ as $h\to0$. When also
\eqref{alp} holds, this is always satisfied for $k\geq1$ and $r=o_h(1)$,
while for $k=0$ we need $hr^{-\alpha}=o_h(1)$. 
We are now in a position to state and prove admissibility for the cases $k=0$
and $k=1$.
\begin{lemma}
Assume \eqref{muas}, $h\leq r=o_h(1)$, and $\Levy^{\mu,r}$ is defined by
\eqref{nonsingop}. If either $k=0$ and
$h=o_r(\frac1{\mu(\{|z|>r\})})$ or $k=1$, then the family
$\{\Levy^h\}_{h>0}$ given by \eqref{sillydisc} is an
\textbf{admissible} approximation of $\Levy^{\mu,r}$.  
\end{lemma}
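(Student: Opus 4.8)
The plan is to verify the three defining properties of admissibility from Definition \ref{def:suitdisc}: membership in the class \eqref{nuas}, $L^1$-consistency, and the uniform Lévy bound \eqref{eq:unifLevy}. For the two cases $k=0$ and $k=1$ the first property requires no new work: Lemma \ref{lem:interpIsDisc} already shows that for these interpolation orders the weights $\omega_{\beta,h}=\int_{|z|>r}p_\beta^k(z)\,d\mu(z)$ are nonnegative and symmetric with $\sum_\beta\omega_{\beta,h}=\mu(\{|z|>r\})<\infty$, so $\Levy^h=\Levy^{\nu_h}$ lies in \eqref{nuas} by Lemma \ref{lemma:suitdisc}.

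For consistency I would invoke the local truncation error bound from Lemma \ref{lemma:lagInterpCons}, namely $\|\Levy^{\mu,r}[\psi]-\Levy^h[\psi]\|_{L^p}\leq Ch^{k+1}\|D^{k+1}\psi\|_{L^p}\,\mu(\{|z|>r\})$ for $p\in\{1,\infty\}$, and reduce the whole matter to showing $h^{k+1}\mu(\{|z|>r\})\to0$. When $k=0$ this is precisely the hypothesis $h=o_r(1/\mu(\{|z|>r\}))$, so there is nothing to prove.

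The only real obstacle I expect is the case $k=1$, where I must establish $h^2\mu(\{|z|>r\})\to0$ using only \eqref{muas} and $h\le r=o_h(1)$. The naive estimate $\mu(\{|z|>r\})\le Cr^{-2}$ combined with $h\le r$ yields only boundedness, not decay, so a genuine limiting argument is needed. I would split $\mu(\{|z|>r\})=\mu(\{r<|z|\le1\})+\mu(\{|z|>1\})$; the outer part gives $h^2\mu(\{|z|>1\})\to0$ since $\mu(\{|z|>1\})<\infty$. For the inner part I use $h\le r$ to pass to $r^2\mu(\{r<|z|\le1\})$ and then, for arbitrary $\delta>0$, split at $|z|=\delta$: the bound $r^2\le|z|^2$ gives $r^2\mu(\{r<|z|\le\delta\})\le\int_{0<|z|\le\delta}|z|^2\,d\mu$, which is small uniformly in $r$ by the integrability in \eqref{muas}, while $r^2\mu(\{\delta<|z|\le1\})\to0$ for each fixed $\delta$. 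Sending first $r\to0$ and then $\delta\to0$ yields $r^2\mu(\{r<|z|\le1\})\to0$, hence $h^2\mu(\{|z|>r\})\to0$ and $L^1$-consistency with $\Levy^{\mu,r}$.

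Finally, for \eqref{eq:unifLevy} I would avoid estimating $\sum_\beta(|z_\beta|^2\wedge1)\omega_{\beta,h}$ directly, since bounding it crudely by $\mu(\{|z|>r\})$ is not uniform as $r\to0$, and instead appeal to Proposition \ref{prop:equivcons}. The truncation error bound above also holds in $L^\infty$, so the same computation gives $\|\Levy^{\mu,r}[\psi]-\Levy^h[\psi]\|_{L^\infty}\to0$; together with $\Levy^{\mu,r}[\psi]\to\Levy^\mu[\psi]$ uniformly as $r\to0$ (because $\Levy^\mu[\psi]-\Levy^{\mu,r}[\psi]=\Levy_r^\mu[\psi]$ is controlled by $\tfrac12\|D^2\psi\|_{L^\infty}\int_{0<|z|\le r}|z|^2\,d\mu\to0$), this produces pointwise convergence $\Levy^h[\psi](x)\to\Levy^\mu[\psi](x)$ for every $\psi\in C_\textup{c}^\infty(\R^N)$. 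Applying Proposition \ref{prop:equivcons} with the fixed target $\mathfrak L=\Levy^\mu$ (which satisfies \eqref{muas}) then delivers \eqref{eq:unifLevy}, and combined with the first two properties this establishes admissibility.
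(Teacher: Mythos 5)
Your proof is correct and follows essentially the same route as the paper's: the class-\eqref{nuas} property via Lemma \ref{lem:interpIsDisc}, consistency by reducing to $h^{k+1}\mu(\{|z|>r\})\to0$ through the error bound of Lemma \ref{lemma:lagInterpCons} (with the $k=0$ case being exactly the extra hypothesis), and \eqref{eq:unifLevy} via Proposition \ref{prop:equivcons} applied to the $L^\infty$ version of the same estimate. The only differences are cosmetic or in your favour: for $k=1$ the paper obtains the limit in one line from $h^2\indik_{|z|>r}\leq |z|^2\wedge r^2$ and dominated convergence, where you unroll the same fact into an explicit $\delta$-splitting, and your bridging step $\Levy^{\mu,r}[\psi]\to\Levy^{\mu}[\psi]$ before invoking Proposition \ref{prop:equivcons} (needed because the target operator itself depends on $r$) makes precise a point the paper leaves implicit.
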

\begin{proof}
By the above discussion and Lemma \ref{lem:interpIsDisc},
$\{\Levy^h\}_{h>0}$ is a \eqref{FD}  type discretization in the class
\eqref{nuas}. For consistency we use the error estimate
\eqref{eq:estimInterpMu}. When $k=0$ we can conclude from the extra
assumption $h=o_r(\frac1{\mu(\{|z|>r\})})$. When $k=1$, we observe that  
\[
h^{k+1} \int_{|z|>r} \dd\mu(z)=  \int_{|z|>r} h^2\dd\mu(z)\leq
\int_{|z|>0} |z|^2\wedge r^2\dd\mu(z)
\]
and conclude by the Dominated Convergence Theorem
since $|z|^2\wedge r^2\to0$ pointwise as $r\to0^+$ and $|z|^2\wedge
r^2\leq |z|^2\wedge1$ ($r\leq1$) which is integrable by \eqref{muas}.  
Uniform integrability \eqref{eq:unifLevy} follows from Proposition
\ref{prop:equivcons} and the $L^\infty$ version of the above estimate.
\end{proof}
For admissibility and higher order interpolation, see Remark \ref{rem:pos}.
\subsubsection{Quadrature from interpolation -- absolute
  continuous $\mu$.}
If $\mu$ is
absolutely continuous with respect to the Lebesgue measure $\dd z$
with density also called $\mu(z)$, then (cf. \cite{JaKaLC08}) we can approximate $\Levy^{\mu,r}$ by 
\begin{equation}\label{eq:interp2}
\begin{split}
\Levy^h[\psi](x)&:=\int_{|z|>r}I_h^{k}\Big[(\psi(x+\cdot)-\psi(x))\mu(\cdot)\Big](z)\dd z\\
&=\sum_{|z_\beta|>r} \big(\psi(x+z_\beta)-\psi(x)\big)\mu(z_\beta)\int_{|z|>r} p^k_\beta(z)\dd z.
\end{split}
\end{equation} 
Note that \eqref{eq:interp2} is in the finite difference form 
\eqref{FD} with symmetric weights
\[
\omega_{\beta,h}=\mu(z_\beta) \int_{|z|>r} p_\beta^k(z)\dd z
\quad\text{for}\quad |z_\beta|>r \qquad \text{and}\qquad \omega_{\beta,h}=0 \quad\text{otherwise.} 
\] 
The weights are nonnegative for $k\leq 6$ since
$\mu\geq0$ and $\tilde\omega_{\beta,h}=\int_{\R^N} p_\beta^k(z)\dd z$ coincides with
Newton-Cotes quadrature weights that are known to be nonnegative.
As we will describe later, this discretization can also be combined with
\eqref{eq:dissing2} to further improve the order of accuracy.

\begin{lemma}\label{mumuas}
Assume \eqref{muas}, $0\leq k\leq 6$ , $h, r>0$, and $\dd \mu(z)=\mu(z)\dd z$
for a density $\mu\in C_b(\R^N\backslash B_r)$. Then the family
$\{\Levy^h\}_{h>0}$ given by \eqref{eq:interp2} is in the class \eqref{nuas}.
\end{lemma}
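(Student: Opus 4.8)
The plan is to verify membership in the class \eqref{nuas} through Lemma \ref{lemma:suitdisc}. Since the discretization \eqref{eq:interp2} is already in the finite difference form \eqref{FD} with the explicit weights
$$
\omega_{\beta,h}=\mu(z_\beta)\int_{|z|>r}p_\beta^k(z)\dd z \quad\text{for }|z_\beta|>r,\qquad \omega_{\beta,h}=0 \quad\text{otherwise,}
$$
it remains only to check three properties of these weights: symmetry $\omega_{\beta,h}=\omega_{-\beta,h}$, nonnegativity $\omega_{\beta,h}\geq0$, and finite total mass $\sum_{\beta\neq0}\omega_{\beta,h}<+\infty$.

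For the symmetry I would combine three facts. First, $z_{-\beta}=-z_\beta$, so $|z_{-\beta}|=|z_\beta|$ and the constraint $|z_\beta|>r$ is preserved under $\beta\mapsto-\beta$. Second, $\mu$ is a symmetric measure by \eqref{muas}, so its density satisfies $\mu(-z)=\mu(z)$ and hence $\mu(z_{-\beta})=\mu(z_\beta)$. Third, the uniform grid and the construction of the basis $\{p_\beta^k\}$ in Section \ref{subsec:interp} are invariant under $z\mapsto-z$, which gives $p_{-\beta}^k(-z)=p_\beta^k(z)$; since the domain $\{|z|>r\}$ is also invariant under this reflection, the change of variables $z\mapsto-z$ yields $\int_{|z|>r}p_{-\beta}^k(z)\dd z=\int_{|z|>r}p_\beta^k(z)\dd z$. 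Multiplying these identities gives $\omega_{-\beta,h}=\omega_{\beta,h}$. For the nonnegativity, $\mu\geq0$ reduces matters to showing $\int_{|z|>r}p_\beta^k(z)\dd z\geq0$ for $0\leq k\leq6$. When $k=0$ or $k=1$ the basis functions are pointwise nonnegative and this is immediate; for $2\leq k\leq6$ it follows from the nonnegativity of the composite Newton--Cotes weights $\int_{\RN}p_\beta^k\dd z$, which are known to be nonnegative for $0\le k\le 6$, together with the observation that for the nodes occurring in the sum the support of $p_\beta^k$ lies in $\{|z|>r\}$, so the restricted and the full integral agree.

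The main obstacle is the finite total mass. Writing $\tilde\mu$ for the function sampling $\mu\,\indik_{|z|>r}$ at the grid nodes, I would first use the interpolation structure to rewrite
$$
\sum_{\beta\neq0}\omega_{\beta,h}=\int_{|z|>r}\sum_{|z_\beta|>r}\mu(z_\beta)p_\beta^k(z)\dd z=\int_{|z|>r}I_h^k[\tilde\mu](z)\dd z.
$$
Since the grid is uniform, each basis function satisfies $\int_{\RN}|p_\beta^k(z)|\dd z\leq C_k h^N$ with a dimensionless constant $C_k$ depending only on $k$, so the total mass is bounded by $C_kh^N\sum_{|z_\beta|>r}\mu(z_\beta)$. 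For fixed $h>0$ I would then split this lattice sum into the bounded annulus $\{r<|z_\beta|\leq1\}$, which contains only finitely many nodes and hence contributes a finite amount because $\mu\in C_b(\RN\setminus B_r)$ is bounded there, and the tail $\{|z_\beta|>1\}$. The tail is where the real content lies: it must be compared with $\int_{|z|>1}\mu(z)\dd z$, which is finite by \eqref{muas}. This comparison relies on $\mu$ being bounded and continuous on $\RN\setminus B_r$, so that $h^N\sum_{|z_\beta|>1}\mu(z_\beta)$ is a Riemann-type sum controlled by the finite integral of the density; for the densities of interest here, such as $\mu(z)=c_{N,\alpha}|z|^{-N-\alpha}$, monotonicity in $|z|$ makes the domination of the lattice sum by the integral immediate. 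Combining the two pieces gives $\sum_{\beta\neq0}\omega_{\beta,h}<+\infty$, and by Lemma \ref{lemma:suitdisc} the family $\{\Levy^h\}_{h>0}$ lies in the class \eqref{nuas}.
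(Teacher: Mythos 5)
Your reduction of the problem to the three properties of the weights via Lemma \ref{lemma:suitdisc} is the right frame, and the symmetry argument is fine. The genuine gap is in the finiteness of $\sum_{\beta\neq0}\omega_{\beta,h}$, which is the whole content of the lemma. After bounding $\int_{\R^N}|p_\beta^k(z)|\dd z\leq C_kh^N$ you are left with the lattice sum $h^N\sum_{|z_\beta|>1}\mu(z_\beta)$, and you assert that this is ``a Riemann-type sum controlled by the finite integral of the density.'' Under the stated hypothesis $\mu\in C_b(\R^N\setminus B_r)$ this is not true: point values of a continuous integrable function on a lattice are not controlled by its integral (in $N=1$ with $h=1$, a density built from continuous bumps of height $1$ and width $2^{-|\beta|}$ centred at the integers satisfies \eqref{muas} and is bounded and continuous away from the origin, yet $h\sum_{|z_\beta|>1}\mu(z_\beta)=\infty$). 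Your own hedge --- retreating to radially monotone densities such as $|z|^{-N-\alpha}$ --- concedes exactly this point, but the lemma is stated for general bounded continuous densities, so the proof as written does not establish it; and since for $k=1$ and nodes away from $B_r$ one has $\int_{\R^N}p_\beta^1(z)\dd z=h^N$ exactly, the lattice sum is not an artifact of your estimate that a sharper bound could avoid. The paper's proof takes a different route that never decouples the basis function from the measure: it bounds $\mu(z_\beta)\leq\|\mu\|_{L^\infty(\R^N\setminus B_r)}$ and then uses the partition of unity $\sum_\beta p_\beta^k\equiv1$ \emph{under an integral against} $\dd\mu$, arriving at $\|\mu\|_{L^\infty(\R^N\setminus B_r)}\int_{|z|>r}\dd\mu(z)<+\infty$ in one line. (Be aware that the paper's display integrates $p_\beta^k$ against $\dd\mu(z)$, i.e.\ it bounds the weights of the general-measure quadrature \eqref{eq:discInterp} rather than the Lebesgue-integral weights of \eqref{eq:interp2}; with $\dd z$ the same partition-of-unity step would produce the infinite quantity $|\{|z|>r\}|$. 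So the difficulty you ran into is a real one, but in any case your argument does not prove the lemma as stated.)

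A secondary point: for $2\leq k\leq6$ your justification of nonnegativity claims that the support of $p_\beta^k$ lies in $\{|z|>r\}$ for every node appearing in the sum. This fails for nodes with $r<|z_\beta|<r+O(kh)$, whose basis functions have support meeting $B_r$; there the truncated integral $\int_{|z|>r}p_\beta^k(z)\dd z$ need not coincide with the full-space Newton--Cotes weight and its sign is not immediate. The paper asserts nonnegativity at the same level of detail (via the untruncated weights), so this is a shared imprecision rather than a defect specific to your write-up, but it should not be presented as an obvious observation.
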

\begin{proof}
By the previous discussion we know that $\Levy^h$ is of the form 
\eqref{FD} with symmetric nonnegative weights, so by Lemma
\ref{lemma:suitdisc}, we just need to check: 
\begin{equation*}
\begin{split}
\sum_{|z_\beta|>r} \mu(z_\beta) \int_{|z|>r} p_\beta^k(z)\dd \mu(z)&\leq \|\mu\|_{L^\infty(\R^N\backslash B_r)} \int_{|z|>r}  \sum_\beta p_\beta^k(z)\dd \mu(z)\\
&=  \|\mu\|_{L^\infty(\R^N\backslash B_r)} \int_{|z|>r}  \dd \mu(z)<+\infty.
\end{split}
\end{equation*}
The proof is complete.\end{proof}

Since also $\mu$ is interpolated now, the local truncation error will 
depend on the regularity of $\mu$. We state the result using
standard Sobolov spaces $W^{k,1}$ and $W^{k,\infty}$.

\begin{lemma}\label{lemma:lag2}
Assume \eqref{muas}, $0\leq k\leq 6$, $h,r>0$ , and that $\dd \mu( z)=\mu(z)\dd
z$ for a density $\mu\in C(\R^N\backslash B_r)\cap
W^{k+1,1}(\R^N \backslash B_r)$. If $\Levy^{\mu,r}$ is defined by
\eqref{nonsingop} and $\{\Levy^h\}_{h>0}$ by \eqref{eq:interp2}, then
there is a constant $C>0$ such that for $p\in\{1,\infty\}$,
\[
\|\Levy^{\mu,r}[\psi]- \Levy^h[\psi]\|_{L^p(\R^N)}\leq C h^{k+1} \|\psi \|_{W^{k+1,p}(\R^N)}\|\mu \|_{W^{k+1,1}(\R^N \backslash B_r)}.
\]
\end{lemma}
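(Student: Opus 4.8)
The plan is to observe that the only change from Lemma \ref{lemma:lagInterpCons} is that now the full product $(\psi(x+\cdot)-\psi(x))\mu(\cdot)$ is interpolated in the $z$-variable and one integrates against Lebesgue measure rather than $\dd\mu$. Writing $G_x(z):=(\psi(x+z)-\psi(x))\mu(z)$ and using the first form of \eqref{eq:interp2}, the local truncation error is
\[
\Levy^{\mu,r}[\psi](x)-\Levy^h[\psi](x)=\int_{|z|>r}\big(G_x(z)-I_h^k[G_x](z)\big)\dd z.
\]
First I would apply the interpolation error estimate \eqref{eq:LagInterpError} in the $z$-variable with $p=1$, cell by cell, to obtain
\[
\int_{|z|>r}\big|G_x(z)-I_h^k[G_x](z)\big|\dd z\leq C h^{k+1}\,\big\|D_z^{k+1}G_x\big\|_{L^1(\{|z|>r\})},
\]
the contribution of the cells meeting $\{|z|=r\}$ being handled by the extension discussed below.

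The key new step is the Leibniz expansion of $D_z^{k+1}G_x$. Since $D_z^\beta[\psi(x+z)]=(D^\beta\psi)(x+z)$, every multi-index $\gamma$ with $|\gamma|=k+1$ contributes
\[
D_z^\gamma G_x(z)=\big(\psi(x+z)-\psi(x)\big)D^\gamma\mu(z)+\sum_{0<\beta\leq\gamma}\binom{\gamma}{\beta}(D^\beta\psi)(x+z)\,D^{\gamma-\beta}\mu(z).
\]
Next I would take the $L^p$-norm in $x$ of $\|D_z^{k+1}G_x\|_{L^1(\{|z|>r\})}$ and estimate each term separately, exploiting translation invariance of the Lebesgue norm. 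For a term with $\beta\neq0$, Tonelli (when $p=1$) and taking the supremum in $x$ first (when $p=\infty$) both give
\[
\Big\|\int_{|z|>r}\big|(D^\beta\psi)(\cdot+z)\big|\,\big|D^{\gamma-\beta}\mu(z)\big|\dd z\Big\|_{L^p(\R^N)}\leq\|D^\beta\psi\|_{L^p(\R^N)}\,\|D^{\gamma-\beta}\mu\|_{L^1(\R^N\backslash B_r)},
\]
while the undifferentiated ($\beta=0$) term is controlled the same way through $\|\psi(\cdot+z)-\psi(\cdot)\|_{L^p}\leq 2\|\psi\|_{L^p}$, giving $2\|\psi\|_{L^p}\|D^\gamma\mu\|_{L^1(\R^N\backslash B_r)}$. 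Summing over the finitely many $\gamma$ and $\beta$ and bounding $\|D^\beta\psi\|_{L^p}\leq\|\psi\|_{W^{k+1,p}(\R^N)}$ and $\|D^{\gamma-\beta}\mu\|_{L^1(\R^N\backslash B_r)}\leq\|\mu\|_{W^{k+1,1}(\R^N\backslash B_r)}$ then yields the claimed bound.

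I expect the main obstacle to be the boundary layer near $\{|z|=r\}$. In the cells that meet this sphere, the quadrature in \eqref{eq:interp2} rests on a stencil that omits the nodes lying in $B_r$, so the clean cell-by-cell interpolation estimate does not apply verbatim, and a naive treatment would leave an $O(h)$ error over a layer of measure $O(h)$, which is too large for $k\geq1$. I would resolve this by first extending $\mu$ from $\R^N\backslash B_r$ to a $W^{k+1,1}(\R^N)$ function of comparable norm (Sobolev extension), so that the full degree-$k$ interpolant is available on the boundary cells and the derivative bound $\|D_z^{k+1}G_x\|_{L^1}$ may be taken over a region only an $O(h)$-layer larger than $\{|z|>r\}$. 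The remaining ingredients — decoupling the mixed $L^p_x$--$L^1_z$ norm via Minkowski's integral inequality, and verifying the estimates hold uniformly for $p=1$ and $p=\infty$ — are routine once the Leibniz decomposition is in place.
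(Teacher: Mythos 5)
Your argument is correct and follows essentially the same route as the paper: a cell-by-cell interpolation error bound for the product $F(x,z)=(\psi(x+z)-\psi(x))\mu(z)$, a Leibniz expansion of $D_z^{k+1}F$, and Fubini/translation invariance to split the result into $\|\psi\|_{W^{k+1,p}(\R^N)}$ times $\|\mu\|_{W^{k+1,1}(\R^N\backslash B_r)}$; the paper merely re-derives the cell-wise interpolation estimate by hand, via Taylor expansions with integral remainder and a change of variables keeping the evaluation points inside each cell, instead of citing \eqref{eq:LagInterpError}. Your discussion of the cells straddling $\{|z|=r\}$ flags a boundary-layer detail that the paper's proof passes over in silence, so it is a welcome extra precaution rather than a deviation.
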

\begin{proof}
Let $Q_\alpha:=z_\alpha + \frac{kh}2(-1,1]^N$ for $\alpha \in \Z^N$,
$y_i^\alpha$ denote the $(k+1)^N$ gridpoints in $Q_\alpha\cap \Grid$,
and $F(x,z)=(\psi(x+z)-\psi(x))\mu(z)$.
Note that then $I^k_h\psi(y_i^\alpha)=\psi(y_i^\alpha)$, 
\[
\displaystyle\bigcup_{\alpha \in k \Z^N} Q_\alpha=
\R^N,\qquad\text{and}\qquad\Levy^h[\psi](x):=\int_{|z|>r}I_h^{k}[F(x,\cdot) ](z)\dd z. 
\]
Using Taylor expansions with integral remainder terms at every point
$y_i^\alpha$, we get 
\begin{equation*}
\begin{split}
&|\Levy^h[\psi](x)-\Levy^{\mu,r}[\psi](x)|\leq \sum_{\alpha \in k \Z^N, |z_\alpha|>r}\int_{Q_\alpha}|I_h^{k}[F(x,\cdot) ](z)-F(x,z)|\dd z\\
&\leq h^{k+1}\sum_{i=1}^{(k+1)^N}  \sum_{\alpha \in k \Z^N, |z_\alpha|>r} \int_{Q_\alpha} \int_0^1 |D^{k+1}_z F(x,z(1-s)+sy_i^\alpha)|\dd s \dd z.
\end{split}
\end{equation*}
By Fubini, the definition of $F$, and the chain rule, for
$p=\{1,\infty\}$ it follows that
\begin{equation*}
\begin{split}
&\|\Levy^h[\psi]-\Levy^{\mu,r}[\psi]\|_{L^p(\R^N)}\\
&  \leq  h^{k+1}\sum_{i=1}^{(k+1)^N}  \sum_{\alpha \in k \Z^N, |z_\alpha|>r} \int_{Q_\alpha} \int_0^1\|D^{k+1}_z F(\cdot,z(1-s)+sy_i^\alpha)\|_{L^p(\R^N)}\dd s \dd z\\
& \leq Ch^{k+1} \| \psi \|_{W^{k+1,p}(\R^N)}   \sum_{i=1}^{(k+1)^N}\sum_{\alpha \in k \Z^N, |z_\alpha|>r} \sum_{l=0}^{k+1}\int_0^1\int_{Q_\alpha}  |D^{l} \mu(z(1-s)+sy_i^\alpha)|\dd z \dd s
\end{split}
\end{equation*}
Now we do the change of variables $y=z(1-s)+sy_i^\alpha$, which has
the change in measure $\dd y=(1-s)^N \dd z$ and maps $Q_\alpha$ into
$\tilde{Q}_\alpha=sy_i^\alpha+(1-s)Q_\alpha$. Then since $s\in[0,1]$ and
$Q_\alpha$ is convex, $\tilde{Q}_\alpha\subset Q_\alpha$ and 
\begin{align*}
\int_{Q_\alpha}  |D^{l} \mu(z(1-s)+sy_i^\alpha)|\dd z
=\int_{\tilde{Q}_\alpha}  (1-s)^N|D^{l} \mu(y)|\dd y \leq \int_{Q_\alpha}  |D^{l} \mu(y)|\dd y.
\end{align*}
For $0\leq l\leq k+1$,
$$\sum_{\alpha \in k \Z^N, |z_\alpha|>r}\int_{Q_\alpha}  |D^{l}
\mu(y)|\dd y\leq \|\mu\|_{W^{k+1,1}(\R^N \backslash B_r)},$$
so we conclude that for $p\in\{1,\infty\}$,
\begin{equation*}
  \|\Levy^h[\psi]-\Levy^{\mu,r}[\psi]\|_{L^p(\R^N)}
 \leq Ch^{k+1} \|\psi \|_{W^{k+1,p}(\R^N)} \|\mu\|_{W^{k+1,1}(\R^N \backslash B_r)}.
\end{equation*}
The proof is complete.
\end{proof}

From Lemmas \ref{mumuas}, \ref{lemma:lag2}, and \ref{lemma:suitdisc}, we have the following result on admissibility.

\begin{corollary}
Assume \eqref{muas}, $0\leq k\leq 6$, $0<r=o_h(1)$, $\dd \mu( z)=\mu(z)\dd
z$ for a density $\mu\in C(\R^N\backslash B_r)\cap
W^{k+1,1}(\R^N \backslash B_r)$, and $\Levy^{\mu,r}$ is defined by
\eqref{nonsingop}. If $h^{k+1}\|\mu\|_{W^{k+1,1}(\R^N \backslash
  B_r)}\to 0$ as $h\to0$,
then the family
$\{\Levy^h\}_{h>0}$ given by \eqref{eq:interp2} is an
\textbf{admissible} approximation of $\Levy^{\mu,r}$.
  \end{corollary}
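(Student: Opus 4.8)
The plan is to verify the three defining properties of an \textbf{admissible} discretization in Definition \ref{def:suitdisc}, now with the role of $\mathfrak L$ played by the bounded nonlocal operator $\Levy^{\mu,r}$, which is itself a L\'evy operator of the form \eqref{muas} with the finite measure $\indik_{|z|>r}\dd\mu$. Almost all of the work has already been carried out in the preceding lemmas, so the proof is in essence an assembly argument. Property (i) is immediate: Lemma \ref{mumuas} shows precisely that, under the stated hypotheses on $k$ and on the density $\mu$, the operators $\{\Levy^h\}_{h>0}$ of \eqref{eq:interp2} lie in the class \eqref{nuas}, and together with Lemma \ref{lemma:suitdisc} this is exactly condition (i).

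For the consistency property (ii) I would invoke the local truncation error estimate of Lemma \ref{lemma:lag2} with $p=1$. For a fixed $\psi\in C_\textup{c}^\infty(\RN)$ the factor $\|\psi\|_{W^{k+1,1}(\RN)}$ is a finite constant, so the estimate reads
$$
\|\Levy^{\mu,r}[\psi]-\Levy^h[\psi]\|_{L^1(\RN)}\le C\,h^{k+1}\|\mu\|_{W^{k+1,1}(\RN\setminus B_r)},
$$
and the right-hand side tends to $0$ as $h\to0^+$ by the standing hypothesis $h^{k+1}\|\mu\|_{W^{k+1,1}(\RN\setminus B_r)}\to0$. This gives $L^1$-consistency.

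Finally, for the uniform integrability property (iii) I would apply the same estimate with $p=\infty$ to obtain $L^\infty$-consistency, hence the pointwise convergence \eqref{eq:pointwiseconv}, after which Proposition \ref{prop:equivcons} delivers the uniform L\'evy bound \eqref{eq:unifLevy} and completes the verification. The one point deserving attention---and the reason the decay is imposed as an explicit assumption rather than deduced---is that $\|\mu\|_{W^{k+1,1}(\RN\setminus B_r)}$ may blow up as $r\to0^+$, since $\mu$ is singular at the origin; the hypothesis $h^{k+1}\|\mu\|_{W^{k+1,1}(\RN\setminus B_r)}\to0$ is exactly what is needed to outweigh this growth. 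I do not anticipate any genuine obstacle: the single truncation estimate of Lemma \ref{lemma:lag2}, read in both $p=1$ and $p=\infty$, simultaneously yields (ii) and (iii).
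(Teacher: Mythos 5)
Your proof is correct and follows essentially the same route as the paper, which simply assembles Lemma \ref{mumuas} (class \eqref{nuas}), Lemma \ref{lemma:lag2} with $p=1$ (consistency), and the $p=\infty$ estimate together with Proposition \ref{prop:equivcons} (uniform L\'evy bound \eqref{eq:unifLevy}). Your remark that the hypothesis $h^{k+1}\|\mu\|_{W^{k+1,1}(\R^N\setminus B_r)}\to0$ is exactly what compensates for the possible blow-up of the Sobolev norm as $r\to0^+$ is also the point the paper makes implicitly.
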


We expect to have more precise results when $\mu$ comes from a
fractional differential operator, i.e. a precise estimate on how
$\|\mu\|_{W^{k+1,1}(\R^N \backslash B_r)}$ depends on $r$. However
stronger assumptions than \eqref{alp} are needed here. It is easy to
find such conditions in general, but for simplicity we only focus on
the fractional Laplacian case. 

\begin{corollary}[Fractional Laplace]\label{lemma:lagfl}
Let $\alpha\in(0,2)$, $0\leq h\leq r=o_h(1)$, $0 \leq k\leq6$,
$\Levy^{\mu,r}$ be defined by \eqref{nonsingop} with density
$\mu(z)=\frac{1}{|z|^{N+\alpha}}$. If $h=o(r^{\frac{\alpha+k+1}{k+1}})$,
then the family $\{\Levy^h\}_{h>0}$ 
given by \eqref{eq:interp2} is an \textbf{admissible} approximation of
$\Levy^{\mu,r}$ . Moreover, for $p=\{1,\infty\}$,
\[
\|\Levy^{\mu,r}[\psi]- \Levy^h[\psi]\|_{L^p(\R^N)}\leq C h^{k+1}{r^{-\alpha-k-1}}.
\]
\end{corollary}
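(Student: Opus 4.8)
The plan is to obtain this corollary as a direct specialization of Lemma \ref{lemma:lag2} together with the admissibility result derived above from Lemmas \ref{mumuas}, \ref{lemma:lag2}, and \ref{lemma:suitdisc}. The only genuinely new ingredient is a sharp estimate of the weighted Sobolev norm $\|\mu\|_{W^{k+1,1}(\R^N\setminus B_r)}$ for the fractional Laplace kernel $\mu(z)=|z|^{-(N+\alpha)}$ as an explicit power of $r$; once this is in hand, both the local truncation error bound and the admissibility claim follow by substitution.

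First I would record the pointwise derivative bound $|D^l\mu(z)|\leq C_l\,|z|^{-(N+\alpha+l)}$ for $z\neq0$ and $0\leq l\leq k+1$. This follows by induction on $l$ (or from homogeneity): each partial derivative of $|z|^{-s}$ produces a factor $z_i|z|^{-s-2}$, so every entry of $D^l|z|^{-(N+\alpha)}$ is a finite sum of terms of the form $(\textup{polynomial of degree}\leq l)\times|z|^{-(N+\alpha)-l-2m}$, each of net homogeneity $-(N+\alpha+l)$. Integrating in polar coordinates then gives, for each $l$,
\[
\|D^l\mu\|_{L^1(\R^N\setminus B_r)}\leq C_N\int_r^\infty \rho^{-(\alpha+l+1)}\dd\rho=\frac{C_N}{\alpha+l}\,r^{-(\alpha+l)},
\]
where the radial integral converges at infinity because $\alpha+l+1>1$. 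Summing over $0\leq l\leq k+1$ and using $r=o_h(1)<1$, so that the top-order term dominates, yields
\[
\|\mu\|_{W^{k+1,1}(\R^N\setminus B_r)}=\sum_{l=0}^{k+1}\|D^l\mu\|_{L^1(\R^N\setminus B_r)}\leq C\,r^{-(\alpha+k+1)}.
\]
In particular this confirms $\mu\in W^{k+1,1}(\R^N\setminus B_r)$, while $\mu\in C(\R^N\setminus B_r)$ is immediate since $\mu$ is smooth away from the origin, so the hypotheses on the density required by Lemma \ref{lemma:lag2} and the admissibility corollary are met.

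With the norm estimate in place, the two conclusions follow at once. Substituting the bound into the conclusion of Lemma \ref{lemma:lag2} and absorbing the fixed factor $\|\psi\|_{W^{k+1,p}}$ into the constant gives the stated local truncation error $\|\Levy^{\mu,r}[\psi]-\Levy^h[\psi]\|_{L^p(\R^N)}\leq C\,h^{k+1}r^{-\alpha-k-1}$ for $p\in\{1,\infty\}$. For admissibility, the hypothesis of the admissibility corollary is $h^{k+1}\|\mu\|_{W^{k+1,1}(\R^N\setminus B_r)}\to0$ as $h\to0$; by the estimate above this is implied by $h^{k+1}r^{-(\alpha+k+1)}\to0$, which rearranges to $h=o\big(r^{(\alpha+k+1)/(k+1)}\big)$ --- precisely the standing hypothesis. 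Invoking that corollary then yields admissibility of $\{\Levy^h\}_{h>0}$.

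I expect the one point requiring a little care to be the derivative bound $|D^l\mu(z)|\leq C_l|z|^{-(N+\alpha+l)}$: it is elementary but is the step that fixes the exponent $-(\alpha+k+1)$ and hence drives both the condition $h=o\big(r^{(\alpha+k+1)/(k+1)}\big)$ and the truncation error rate. Everything else is routine bookkeeping on top of the general lemmas already established.
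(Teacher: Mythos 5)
Your proposal is correct and follows essentially the same route as the paper: bound $|D^l\mu(z)|$ by $C|z|^{-(N+\alpha+l)}$, integrate in polar coordinates over $\{|z|>r\}$ to get $\|\mu\|_{W^{k+1,1}(\R^N\setminus B_r)}=O(r^{-\alpha-k-1})$, and then substitute into Lemma \ref{lemma:lag2} and the admissibility corollary. The paper's proof is just a terser version of exactly these steps, so there is nothing to add.
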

\begin{proof}
Since $D^k\mu(z)=O(\frac1{|z|^{N+\alpha+k}})$ and
  $\int_{|z|>r}\frac1{|z|^{N+\alpha+k}}dz=c\int_r^\infty
\frac1{r^{1+\alpha+k}}\dd r=C\frac1{r^{\alpha+k}}$, we find that
$ \|\mu\|_{W^{k+1,1}(\R^N \backslash B_r)}=O(\frac1{r^{\alpha+k+1}})$.
\end{proof}

\begin{remark}[Fractional Laplace]
Combining the discretizations of Corollary \ref{lemma:lagfl}
(Newton-Cotes for the nonsingular part) and Lemma
\ref{lem:gooddiscorig} (adapted vanishing viscosity for the singular
part) we get a high order monotone discretization of the fractional
Laplacian with (combined) local truncation error,
\[
E=O(r^{4-\alpha}+h^{k+1} r^{-\alpha-k-1}).
\]
The optimal choice of $r$ is $r=h^{\frac{k+1}{k+5}}$, which leads to
a LTE of $O(h^{\frac{k+1}{k+5}(4-\alpha)})$. Note that for all
$\alpha\in(0,2)$, this rate is increasing in $k$, superlinear for
$k\geq 3$, and superquadratic in the limit $k\to\infty$. The best
choice giving an admissible (monotone) scheme is $k=6$ and
$r=O(h^{\frac{7}{11}})$ with a LTE of $O(h^{\frac{7}{11}(4-\alpha)})$.
\end{remark}

\begin{remark}[Random walk approximation]
In \cite{Val09, BuVa16} the Fractional Heat
Equation is formally derived from a random walk approximation with
arbitrarily long jumps,
$$
U^j(x_\beta)=U^{j-1}(x_\beta)+\Delta
t\sum_{\gamma\in\Z^N\setminus\{0\}}\big(U^{j-1}(x_\beta+z_\gamma)-U^{j-1}(x_\beta)\big)\omega_{\gamma,h},$$
where $\omega_{\gamma,h}:=\mu^\alpha(z_\gamma)h^N$,
$\mu^\alpha(z):=\frac{c_{N,\alpha}}{|z|^{N+\alpha}}$ for $z\neq0$,
$\mu^\alpha(0)=0$, and $c_{N,\alpha}$ is such that
$\sum_{\gamma\in\Z^N}\mu^\alpha(\gamma)=1$. Note that here $\Delta t=h^\alpha$ and
$\frac{\mu^\alpha(\gamma)}{h^\alpha}=h^N\mu^\alpha(z_\gamma)$. The
scheme corresponds to the spatial discretization  \eqref{eq:interp2}
with $k=0$, and converges by Corollary \ref{lemma:lagfl} and Theorem
\ref{thm:fullydisc} if we require 
$|z_\gamma|>r=ch^{\frac1{\alpha+1+\epsilon}}$ for some $c,\epsilon>0$.
\end{remark}

\subsection{A second order discretization of the fractional Laplacian}
\label{sec:pdl}
The fractional Laplacian $-(-\Delta)^{\frac\alpha2}$ can be discretized
by the corresponding power of the discrete Laplacian
\[
\Delta_h\psi(x)=\frac{1}{h^2}\sum_{i=1}^N\psi(x+e_ih)+\psi(x-e_ih)-2\psi(x)
\]
defined via subordination as
\begin{equation}\label{discfractlapN1}
(-\Delta_h)^{\frac{\alpha}{2}}[\psi](x):=\frac{1}{\Gamma(-\frac{\alpha}{2})}\int_0^\infty\left(\e^{t\Delta_h}\psi (x)-\psi(x)\right)\frac{\dd t}{t^{1+\frac{\alpha}{2}}},
\end{equation}
where $\Psi(x,t):=\e^{t\Delta_h}\psi (x)$ is the solution of the semi-discrete heat equation
\begin{equation}\label{eq:heateq1}
\left\{ \begin{array}{lll}
\displaystyle \dell_t\Psi=\Delta_h \Psi \quad&\qquad \text{for} \qquad&\quad (x,t)\in\R^N \times(0,\infty),\\[0.1cm]
\Psi(x,0)=\psi(x) \quad&\qquad \text{for} \qquad&\quad x\in\R^N.
\end{array}
\right.
\end{equation}
The solution of \eqref{eq:heateq1} has an explicit representation formula,
$$\Psi(x,t):=\e^{t\Delta_h}\psi
(x)=\sum_{\beta}\psi(x-z_\beta)G\Big(\beta,\frac{t}{h^2}\Big),
$$
where \(
G(\beta,t)=\e^{-2Nt}\prod_{i=1}^N I_{|\beta_i|}(2t)
\)
is the  fundamental solution of \eqref{eq:heateq1} and $I_m$ 
the modified Bessel function of first kind and order $m\in 
\N$.  Moreover $G\geq0$, $G(\beta,t)=G(-\beta,t)$,
and $\sum_\beta G(\beta,t)=1$, see e.g. \cite{CiRoStToVa15,LiRo18}.  

The original idea of this discretization is due to Ciaurri et al. 
\cite{CiRoStToVa15,CiRoStToVa16}. Here the authors obtain a
Local Truncation Errors of $O(h^{2-\alpha})$ for $N=1$. These results were then
extended to $N>1$ in \cite{CuDTG-GPa17a,CuDTG-GPa17b} for various boundary value
problems and the whole space case. Here the
authors also improve the local truncation error to $O(h^2)$
independently of $\alpha\in(0,2)$. This means that the approximation
$(-\Delta_h)^{\frac{\alpha}{2}}$ preserves the $O(h^2)$ error bound of
the discrete Laplacian $\Delta_h$. 

We now show that \eqref{discfractlapN1} is an \textbf{admissible}
operator. First we express \eqref{discfractlapN1} in the form
\eqref{FD}. This result has essentially been proved in \cite{CiRoStToVa15}.

\begin{lemma}
  \label{dfl_FD}
Let $h>0$, $\alpha\in(0,2)$, and $N\geq1$. Then $(-\Delta_h)^{\frac{\alpha}{2}}$ given by \eqref{discfractlapN1} is an operator of the form \eqref{FD},
\begin{equation}\label{eq:discflexp}
(-\Delta_h)^{\frac{\alpha}{2}}[\psi](x)=\sum_{\beta\not=0}(\psi(x+z_\beta)-\psi(x))K_{\beta,h},
\end{equation}
where
$K_{\beta,h}=\frac{1}{h^\alpha}\frac{1}{\Gamma(-\frac{\alpha}{2})}\int_0^\infty G(\beta,t)\frac{\dd t}{t^{1+\frac{\alpha}{2}}}.$
\end{lemma}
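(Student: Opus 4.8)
The plan is to substitute the explicit semigroup representation into the subordination formula \eqref{discfractlapN1} and then interchange summation and integration. First I would rewrite the semigroup increment. Using $\sum_\beta G(\beta,t/h^2)=1$ to replace $\psi(x)$ by $\sum_\beta \psi(x)G(\beta,t/h^2)$, and then the symmetry $G(\beta,t)=G(-\beta,t)$ together with $z_{-\beta}=-z_\beta$ and a reindexing $\beta\mapsto-\beta$, one gets
$$
\e^{t\Delta_h}\psi(x)-\psi(x)=\sum_{\beta}\big(\psi(x-z_\beta)-\psi(x)\big)G(\beta,t/h^2)=\sum_{\beta\neq0}\big(\psi(x+z_\beta)-\psi(x)\big)G(\beta,t/h^2),
$$
where the $\beta=0$ term vanishes identically. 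Substituting this into \eqref{discfractlapN1} expresses $(-\Delta_h)^{\frac{\alpha}{2}}[\psi](x)$ as $\frac{1}{\Gamma(-\frac{\alpha}{2})}$ times the $t$-integral of this series against $t^{-1-\frac{\alpha}{2}}\dd t$.

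The key step is to justify exchanging $\sum_{\beta\neq0}$ with $\int_0^\infty$. Since $\psi\in C_\textup{c}^\infty(\R^N)$ we have $|\psi(x+z_\beta)-\psi(x)|\leq 2\|\psi\|_{L^\infty}$, so by Tonelli it suffices to bound
$$
\int_0^\infty\sum_{\beta\neq0}G(\beta,t/h^2)\frac{\dd t}{t^{1+\frac{\alpha}{2}}}=\int_0^\infty\big(1-G(0,t/h^2)\big)\frac{\dd t}{t^{1+\frac{\alpha}{2}}},
$$
where I again used $G\geq0$ and $\sum_\beta G=1$. After the scaling $s=t/h^2$ this equals $h^{-\alpha}\int_0^\infty(1-G(0,s))\,s^{-1-\frac{\alpha}{2}}\dd s$. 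Because $G(0,\cdot)$ is analytic with $G(0,0)=1$ (it solves the semi-discrete heat equation \eqref{eq:heateq1}), one has $1-G(0,s)=O(s)$ as $s\to0^+$, so the integrand is $O(s^{-\frac{\alpha}{2}})$ near the origin and integrable there since $\frac{\alpha}{2}<1$; for large $s$ the bound $1-G(0,s)\leq1$ leaves the integrable tail $s^{-1-\frac{\alpha}{2}}$. Hence the double sum/integral is finite and Fubini applies.

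With the interchange justified I would pull the $\beta$-independent integrals out and read off
$$
(-\Delta_h)^{\frac{\alpha}{2}}[\psi](x)=\sum_{\beta\neq0}\big(\psi(x+z_\beta)-\psi(x)\big)\,\frac{1}{\Gamma(-\frac{\alpha}{2})}\int_0^\infty G(\beta,t/h^2)\frac{\dd t}{t^{1+\frac{\alpha}{2}}},
$$
which is precisely the form \eqref{FD} with stencil points $z_\beta\in\Grid$. Finally, performing the same change of variables $s=t/h^2$ inside each weight extracts the factor $h^{-\alpha}$ and yields the stated $K_{\beta,h}=\frac{1}{h^\alpha}\frac{1}{\Gamma(-\frac{\alpha}{2})}\int_0^\infty G(\beta,s)\,s^{-1-\frac{\alpha}{2}}\dd s$.

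The only genuine obstacle is the Fubini justification, and within it the behaviour of $1-G(0,s)$ at $s=0$; everything else is algebraic bookkeeping. That control rests entirely on the already-cited properties $G\geq0$, $\sum_\beta G(\beta,s)=1$, $G(\beta,s)=G(-\beta,s)$, and the smoothness of $G(0,\cdot)$ with $G(0,0)=1$. I would also remark that the integrability near $s=0$ is exactly what guarantees that the subordination integral in \eqref{discfractlapN1} converges for $\psi\in C_\textup{c}^\infty(\R^N)$ in the first place.
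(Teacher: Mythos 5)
Your proposal is correct and follows essentially the same route as the paper: substitute the representation formula for $\e^{t\Delta_h}\psi$, use $\sum_\beta G(\beta,t/h^2)=1$ to distribute $\psi(x)$ over the sum, interchange sum and integral, and rescale $t/h^2$ to extract the factor $h^{-\alpha}$. The only difference is that you spell out the Tonelli justification via the bound on $\int_0^\infty(1-G(0,s))s^{-1-\frac{\alpha}{2}}\dd s$, which the paper leaves implicit here and instead carries out (as the estimate $C_1+C_2<+\infty$) in the subsequent admissibility lemma.
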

\begin{proof}
By  \eqref{discfractlapN1}, the representation formula for
$\Psi$, and $\sum_\beta G(\beta,\frac{t}{h^2})=1$, 
\begin{equation*}
\begin{split}
(-\Delta_h)^{\frac{\alpha}{2}}[\psi](x):=&\frac{1}{\Gamma(-\frac{\alpha}{2})}\int_0^\infty\bigg(\sum_{\beta}\psi(x-z_\beta)G\Big(\beta,\frac{t}{h^2}\Big)-\psi(x)\bigg)\frac{\dd t}{t^{1+\frac{\alpha}{2}}}\\
=&\frac{1}{\Gamma(-\frac{\alpha}{2})}\int_0^\infty\sum_{\beta}\left(\psi(x-z_\beta)-\psi(x)\right)G\Big(\beta,\frac{t}{h^2}\Big)\frac{\dd t}{t^{1+\frac{\alpha}{2}}}\\
=&\sum_{\beta}\left(\psi(x-z_\beta)-\psi(x)\right)\frac{1}{\Gamma(-\frac{\alpha}{2})}\int_0^\infty G\Big(\beta,\frac{t}{h^2}\Big)\frac{\dd t}{t^{1+\frac{\alpha}{2}}}.
\end{split}
\end{equation*}
The change of variables $\tau=t/h^2$ finishes the proof.
\end{proof}

\begin{remark}
In dimension $N=1$ a more explicit expression for
$K_{\beta,h}$ is given in \cite{CiRoStToVa16}:
$K_{j,h}:=\frac{1}{h^\alpha}\frac{2^\alpha\Gamma(\frac{1+\alpha}{2})\Gamma(|j|-\frac{\alpha}{2})}{\sqrt{\pi}|\Gamma(-\frac{\alpha}{2})|\Gamma(|j|+1+\frac{\alpha}{2})} \ \text{for} \ j\in \Z, \ j\not=0.$
\end{remark}

\begin{lemma}
Assume $h>0$, $\alpha\in(0,2)$, and $N\geq1$. The family
$\{-(-\Delta_h)^{\frac{\alpha}{2}}\}_{h>0}$ given by
\eqref{discfractlapN1} (or \eqref{eq:discflexp}) is an
\textbf{admissible} approximation of
$-(-\Delta)^{\frac{\alpha}{2}}$ with \textbf{Local Truncation Error}
\[
\|(-\Delta_h)^{\frac{\alpha}{2}}[\psi]-(-\Delta)^{\frac{\alpha}{2}}[\psi]\|_{L^p(\R)}= O(h^2),\]
for $p=\{1,\infty\}$ and $\psi \in C_c^\infty(\R^N)$.
\end{lemma}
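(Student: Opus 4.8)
The plan is to verify the three conditions of Definition \ref{def:suitdisc} together with the stated local truncation error, leaning on the finite difference representation already obtained in Lemma \ref{dfl_FD}. By that lemma the operator $-(-\Delta_h)^{\frac\alpha2}$ is of the form \eqref{FD} with stencil $\{z_\beta\}\subset\Grid$ and weights $\omega_{\beta,h}=-K_{\beta,h}$, where $K_{\beta,h}=\frac{1}{h^\alpha}\frac{1}{\Gamma(-\frac\alpha2)}\int_0^\infty G(\beta,t)\frac{\dd t}{t^{1+\frac\alpha2}}$. By Lemma \ref{lemma:suitdisc}, membership in the class \eqref{nuas} then reduces to checking $z_\beta=-z_{-\beta}$, $\omega_{\beta,h}=\omega_{-\beta,h}\ge0$, and $\sum_{\beta\ne0}\omega_{\beta,h}<+\infty$.

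First I would settle symmetry and sign. The antisymmetry $z_\beta=-z_{-\beta}$ is automatic on the uniform grid, and $\omega_{\beta,h}=\omega_{-\beta,h}$ follows from $G(\beta,t)=G(-\beta,t)$. The decisive point is nonnegativity: since $-\frac\alpha2\in(-1,0)$ we have $\Gamma(-\frac\alpha2)<0$, while $G\ge0$ and, for $\beta\ne0$, the integral $\int_0^\infty G(\beta,t)\,t^{-1-\frac\alpha2}\dd t$ converges and is positive (because $G(\beta,t)\to0$ fast enough as $t\to0^+$ when $\beta\ne0$). Hence $K_{\beta,h}\le0$ and $\omega_{\beta,h}=-K_{\beta,h}\ge0$, so the scheme is monotone.

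Next comes finiteness of the total mass. Using $\sum_\beta G(\beta,t)=1$, I would write
\[
\sum_{\beta\ne0}\omega_{\beta,h}=\frac{1}{h^\alpha|\Gamma(-\frac\alpha2)|}\int_0^\infty\big(1-G(0,t)\big)\frac{\dd t}{t^{1+\frac\alpha2}},
\]
and establish convergence from the two ends: as $t\to0^+$ one has $1-G(0,t)=O(t)$ (from $G(0,t)=e^{-2Nt}\prod_i I_0(2t)=1-2Nt+O(t^2)$), so the integrand is $O(t^{-\frac\alpha2})$, integrable since $\frac\alpha2<1$; as $t\to\infty$, the Bessel asymptotics give $G(0,t)=O(t^{-N/2})\to0$, so the integrand is $O(t^{-1-\frac\alpha2})$, integrable since $\frac\alpha2>0$. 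This also exhibits the expected scaling $\nu_h(\R^N)=O(h^{-\alpha})$ of the CFL discussion. With Lemma \ref{lemma:suitdisc} this places $\{-(-\Delta_h)^{\frac\alpha2}\}_{h>0}$ in the class \eqref{nuas}, giving Definition \ref{def:suitdisc}(i).

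For consistency and the truncation error I would invoke the sharp estimates of \cite{CiRoStToVa15,CiRoStToVa16,CuDTG-GPa17a,CuDTG-GPa17b}, which yield the $L^p$ bound of order $O(h^2)$ uniformly in $\alpha\in(0,2)$ for $p\in\{1,\infty\}$ and $\psi\in C_c^\infty$; this gives the displayed LTE and in particular the $L^1$-consistency of Definition \ref{def:suitdisc}(ii). Finally the uniform Lévy integrability \eqref{eq:unifLevy} follows from Proposition \ref{prop:equivcons}: the $O(h^2)$ bound in $L^\infty$ furnishes the pointwise convergence \eqref{eq:pointwiseconv} (via the Remark following that proposition), which is exactly its hypothesis. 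The main obstacle is the $O(h^2)$ truncation error itself, genuinely delicate and the reason this is an order-two scheme for every $\alpha$, but it is imported from the cited works; within this paper the only self-contained analytic work is the nonnegativity of the weights (hinging on $\Gamma(-\frac\alpha2)<0$) and the convergence of the mass integral above.
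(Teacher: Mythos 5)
Your verification of admissibility tracks the paper's proof closely: the finite-difference form from Lemma \ref{dfl_FD}, symmetry of the weights inherited from $G(\beta,t)=G(-\beta,t)$, finiteness of the total mass via $\sum_{\beta\neq0}G(\beta,t)=1-G(0,t)$ with $1-G(0,t)=O(t)$ near $t=0$ and boundedness for $t\geq1$, and Proposition \ref{prop:equivcons} for \eqref{eq:unifLevy}. You are in fact more careful than the paper on one point: since $\Gamma(-\frac{\alpha}{2})<0$, the nonnegative weights of $-(-\Delta_h)^{\frac{\alpha}{2}}$ are $\omega_{\beta,h}=-K_{\beta,h}$, whereas the paper simply asserts $K_{\beta,h}\geq0$.

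The gap is in consistency, which is the heart of the lemma. You import the $O(h^2)$ truncation error from \cite{CiRoStToVa15,CiRoStToVa16,CuDTG-GGPa17a,CuDTG-GGPa17b}, but \cite{CiRoStToVa15,CiRoStToVa16} only give $O(h^{2-\alpha})$ in dimension one, and what Definition \ref{def:suitdisc}(ii) actually requires is an $L^1(\R^N)$ estimate for all $N\geq1$ (with the $L^\infty$ version needed separately to invoke Proposition \ref{prop:equivcons}); the reference that would cover this is listed as in preparation, so the estimate cannot simply be quoted. The paper therefore proves it, and the argument is short enough that you should supply it: by subordination the error is $\frac{1}{\Gamma(-\frac{\alpha}{2})}\int_0^\infty\big(\e^{t\Delta_h}\psi-\e^{t\Delta}\psi\big)t^{-1-\frac{\alpha}{2}}\dd t$, and $E:=\e^{t\Delta_h}\psi-\e^{t\Delta}\psi$ solves $\partial_tE=\Delta_hE+\tau$ with $E(\cdot,0)=0$ and $\tau=(\Delta-\Delta_h)\e^{t\Delta}\psi$. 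Writing $\e^{t\Delta}\psi=\psi*F(\cdot,t)$ and using Young's inequality on either $D^4\psi*F$ or $\psi*D^4F$ gives
\begin{equation*}
\|\tau(\cdot,t)\|_{L^p}\leq h^2\|D^4\e^{t\Delta}\psi\|_{L^p}\leq Ch^2\min(1,t^{-2}),\qquad p\in\{1,\infty\},
\end{equation*}
and the discrete maximum principle and a standard $L^1$ bound yield $\|E(\cdot,t)\|_{L^p}\leq\int_0^t\|\tau(\cdot,s)\|_{L^p}\dd s\leq Ch^2\min(t,1)$; since $\int_0^\infty\min(t,1)\,t^{-1-\frac{\alpha}{2}}\dd t<\infty$ for every $\alpha\in(0,2)$, the $O(h^2)$ bound follows in both norms. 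Without this (or an equally explicit substitute), the stated local truncation error is asserted rather than proved.
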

\begin{proof}
By Lemma \ref{dfl_FD}, $-(-\Delta_h)^{\frac{\alpha}{2}}$ is an operator in
the form \eqref{FD} explicitly given by \eqref{eq:discflexp}. We show
that it is in the class \eqref{nuas}. It is clear that 
$K_{\beta,h}$ is nonnegative and symmetric in $\beta$ since these properties are
shared by $G$. Next,  
\[
C_2:=\sum_{\beta\not=0} \int_{1}^\infty  G(\beta,t)\frac{\dd t}{t^{1+\frac{\alpha}{2}}}= \int_{1}^\infty  \sum_{\beta\not=0} G(\beta,t)\frac{\dd t}{t^{1+\frac{\alpha}{2}}}\leq  \int_{1}^\infty  \frac{\dd t}{t^{1+\frac{\alpha}{2}}}=\frac{2}{\alpha},
\]
and
\[
C_1:=\sum_{\beta\not=0} \int_{0}^1  G(\beta,t)\frac{\dd t}{t^{1+\frac{\alpha}{2}}}=  \int_{0}^1  \sum_{\beta\not=0} G(\beta,t)\frac{\dd t}{t^{1+\frac{\alpha}{2}}}= \int_{0}^1 (1- G(0,t))\frac{\dd t}{t^{1+\frac{\alpha}{2}}}.
\]
By regularity and the properties of $G$,
\(
|1-G(0,t)|=|G(0,0)-G(0,t)|\leq C t
\)
for $C=\max_{\xi\in[0,t]}\{\partial_t G(0,t)\}$, and then
\(
C_1\leq C\int_0^1 t \frac{\dd t}{t^{1+\frac{\alpha}{2}}}=C\frac{2}{2-\alpha}.
\)
We conclude that
\[
\sum_{\beta\not=0}K_{\beta,h}=\frac{1}{h^\alpha}\frac{1}{\Gamma(-\frac{\alpha}{2})} (C_1+C_2)<+\infty,
\]
and that $-(-\Delta_h)^{\frac{\alpha}{2}}$ is in the class
\eqref{nuas} by
Lemma \ref{lemma:suitdisc}.

Now  we need to show that the discretization is consistent.  We proceed as in \cite{CuDTG-GPa17a,CuDTG-GPa17b}. Using directly the semigroup formulation \eqref{discfractlapN1}, we get that
\[
(-\Delta_h)^{\frac{\alpha}{2}}[\psi](x)-(-\Delta)^{\frac{\alpha}{2}}[\psi](x)= \frac{1}{\Gamma(-\frac{\alpha}{2})}\int_0^\infty\left(\e^{t\Delta_h}\psi (x)-\e^{t\Delta}\psi (x)\right)\frac{\dd t}{t^{1+\frac{\alpha}{2}}}
\]
where $\e^{t\Delta}\psi (x)$ is the solution of the heat equation
with initial condition $\psi$. Assume for the moment that the following estimate holds for $p=\{1,\infty\}$:
\begin{equation}\label{eq:estimheaterror}
\|\e^{t\Delta_h}\psi(\cdot) -\e^{t\Delta}\psi(\cdot)\|_{L^p(\R^N)}\leq\left\{
\begin{split}
Ct h^2 \quad &\textup{for} \quad 0\leq t\leq1, \\
C\frac{h^2}{t} \quad &\textup{for} \quad t\geq1,
\end{split}
\right.
\end{equation}
for some $C>0$ depending on $\psi$. Then,
\begin{equation*}
\begin{split}
\|(-\Delta_h)^{\frac{\alpha}{2}}[\psi]-(-\Delta)^{\frac{\alpha}{2}}[\psi]\|_{L^p(\R^N)}&\leq \frac{1}{\Gamma(-\frac{\alpha}{2})}\int_0^\infty\|\e^{t\Delta_h}\psi(\cdot) -\e^{t\Delta}\psi(\cdot)\|_{L^p(\R^N)}\frac{\dd t}{t^{1+\frac{\alpha}{2}}}\\
&\leq C h^2 \int_0^1 \frac{\dd t}{t^{\frac{\alpha}{2}}}+Ch^2\int_1^\infty \frac{\dd t}{t^{2+\frac{\alpha}{2}}}\leq \tilde{C} h^2.
\end{split}
\end{equation*}
From the $L^\infty$-estimate and Lemma \ref{prop:equivcons}, we also
have that \eqref{eq:unifLevy} holds.

It only remains  to prove estimate \eqref{eq:estimheaterror}. Recall that  
\begin{equation*}\label{eq:heatsolconv}
\e^{t\Delta}\psi(x)=\int_{\R^N}\psi(x-y)F(y,t)\dd y \quad \textup{with} \quad F(x,t)=\frac{1}{(4\pi t)^{N/2}}\e^{-\frac{|x|^2}{4t}},
\end{equation*}
and set $\tau(x,t):= \partial_t \e^{t\Delta}\psi(x)- \Delta_h
\e^{t\Delta}\psi (x)$. Since $\e^{t\Delta}\psi$ is smooth, a
Taylor expansion argument and the properties of $F, D^4 F$  show that
\begin{align}\label{eq:estimatelocaltrunc}
\|\tau(\cdot,t)\|_{L^p}&\leq h^2\|D^4
e^{t\Delta}\psi\|_{L^p}\leq
\begin{cases}
h^2\|D^4\psi\|_{L^p}\|F\|_{L^1} \leq C h^2 \quad \textup{for} \quad 0\leq t\leq1, \\
h^2\|\psi\|_{L^p}\|D^4F\|_{L^1} \leq C\frac{h^2}{t^2} \quad \textup{for} \quad t\geq1.
\end{cases}
\end{align}
Now let $E(x,t):=\e^{t\Delta_h}\psi(x) -\e^{t\Delta}\psi(x)$ and note that
$\partial_t E(x,t) = \Delta_h E(x,t)+\tau(x,t)$ and $E(x,0)=0$. The
weak maximum principle (for \eqref{eq:heateq1}) and a standard $L^1$ bound then
immediately yield 
$$\|E(\cdot,t)\|_{L^p}\leq \int_0^t\|\tau(\cdot,s)\|_{L^p}\dd s, $$
and  \eqref{eq:estimheaterror} follows from \eqref{eq:estimatelocaltrunc}.
\end{proof}

%%%%%%%%%%%%%%%%%%%%%%%%%%%%%%%%%%%%%%%%%%%%%%%%%%%%

\subsection{Approximation of the nonlinearity}\label{sec:aproxNL}
As we saw in Section \ref{sec:CFL}, we need to impose the condition
\eqref{CFL} when the schemes have some explicit part (i.e. when $0\leq\theta<1$). 
This condition  requires the nonlinearity  $\varphi$ to be
(locally) Lipschitz. But our results can handle merely continuous
$\varphi$. If $\varphi$ is not locally 
Lipschitz as e.g. in the fast diffusion case, we must 
replace it by a Lipschitz approximation to get explicit monotone schemes.
To be precise, we approximate
$\varphi$ by a sequence of nondecreasing Lipschitz functions
$\varphi^\epsilon$ converging  locally uniformly as
$\epsilon=o_h(1) \to 0^+$. The \eqref{CFL} condition is then
\[
\tag{CFL}
\Delta t (1-\theta) L_{\varphi^\epsilon} \nu_{h}(\R^N)\leq1. 
\]
 
Note that $L_{\varphi^\epsilon}\to\infty$ as $\epsilon\to 0^+$ making \eqref{CFL} a more and more restrictive condition as $h$ approaches zero. 

There are several ways of choosing the nonlinearity $\varphi^\epsilon$
in an \textbf{admissible} way. Two simple and general choices (cf. e.g. \cite{DTEnJa18a,Vaz07}) are
\begin{equation*}
\varphi^\epsilon(\xi):=
(\varphi*\omega_\epsilon)(\xi)-(\varphi*\omega_\epsilon)(0)\qquad\text{and}\qquad\varphi^\epsilon(\xi)+
\epsilon \xi, 
\end{equation*}
where $\omega_\epsilon$ is a standard mollifier in $\R$. However, in
many applications $\varphi$ is non-Lipschitz only at
the origin. A well-known  example is the Fast Diffusion Equation where
$\varphi(\xi)= \xi^m$ for $0<m<1$. In this case an easier and more
efficient choice is
\begin{equation*}
\varphi^\epsilon(\xi)=\left\{
\begin{array}{lll}
\varphi(\xi + \epsilon)-\varphi(\epsilon) &\textup{if} & \xi \geq0\\
\varphi(\xi - \epsilon)-\varphi(-\epsilon)  & \textup{if} &\xi <0.
\end{array}
\right.
\end{equation*} 
Clearly  $\varphi^\epsilon \to
\varphi$ locally uniformly and
$\varphi^\epsilon$ is Lipschitz with Lipschitz constant
\[
L_{\varphi^\epsilon}= |(\varphi^\epsilon)'(0)|= \frac{m}{\epsilon^{1-m}}.
\]
Moreover, this
approximation enjoys the very interesting property of preserving the
zero level sets of the solution since $\varphi^\epsilon
(0)=0=\varphi(0)$.  

%%%%%%%%%%%%%%%%%%%%%%%%%%%%%%%%%%%%%%%%%%%%%%%%%%%%
%%%%%%%%%%%%%%%%%%%%%NEW SECTION%%%%%%%%%%%%%%%%%%%%%%%
%%%%%%%%%%%%%%%%%%%%%%%%%%%%%%%%%%%%%%%%%%%%%%%%%%%%
\section{Numerical experiments in 1D}
\label{sec:numsim}

In this section, we test our numerical schemes on interesting
special cases of \eqref{E}--\eqref{IC} in one space dimension  that
involve the fractional Laplacian,
\begin{equation}\label{StefanProb}
\dell_t u(x,t) +(-\Delta)^{\frac{\alpha}{2}} [\varphi(u(\cdot,t))](x)=g(x,t)\qquad\text{in}\qquad Q_T:=\R\times(0,T)
\end{equation}
for $\alpha\in(0,2)$.
All the schemes are of the form \eqref{FullyDiscNumSch1}, 
and since our initial data and right-hand sides will be smooth, we simply
take $U_\beta^0=u_0(x_\beta)$ and $F_\beta^j=f(x_\beta,t_j)$. We
consider explicit  ($\theta=0$)  and implicit  ($\theta=1$)  schemes and the following spatial
discretizations of the nonlocal operators:
\begin{enumerate}[1)]
\item MpR $=$ Midpoint Rule + trivial discretization for singular part
\item FOI $=$ First Order Interpolation + trivial discretization for singular part
\item SOI $=$ Second Order Interpolation + adapted vanishing viscosity
  %for singular part
\item PDL $=$ Powers of the Discrete Laplacian
\end{enumerate}
All experiments have been run on equidistant grids in space and time,
with $\Delta t$ such that \eqref{CFL} holds and the overall order of
convergence is determined by the spatial discretization.
To compute the solutions we restrict to a
(sufficiently) large  bounded spatial domain
$I$ and set the numerical solution equal zero outside. See 
Section \ref{sec:TruncationDomainEffect} for some numerical tests on how the
size of the domain affects the error. 

The error is calculated either in $L^1(I)$ or $L^\infty(I)$
at a certain time $T>0$. To compute the error, we consider examples with (i)
known exact solutions, or (ii) we force a nice function
$v$ to be a solution by taking
\begin{equation}\label{rhsStefanProb}
g(x,t)=\dell_t v(x,t) +(-\Delta)^{\frac{\alpha}{2}} [\varphi(v(\cdot,t))](x),
\end{equation}
or (iii) we compute the errors numerically. In the latter case we
assume the error $E$ satisfies $E=Ch^\gamma$, take $h_j=c2^{-j}$, and compute an
estimate of the rate $\gamma$ as
$$
\gamma=\log_2\left(\frac{E_{j-1}}{E_{j}}\right).
$$ 

\subsection{Fractional Heat Equation (explicit scheme)}
\label{sec:FractionalHeatEquationExplicit}

We consider \eqref{StefanProb} with $\varphi(\xi)=\xi$, $\alpha=1$,
and $g\equiv0$. This is a Fractional Heat Equation with explicit fundamental
solution $K(x,t)=\frac{t}{t^2+|x|^2}$. 
We take $u_0(x)=K(x,1)$ so that the exact solution is
\begin{equation}\label{CauchyDistr}
u(x,t)=K(x,t+1).
\end{equation}
We also take $I=[-5000,5000]$.  By \eqref{CauchyDistr}, $u\sim10^{-8}$ 
outside of $I$, and hence is negligible in the error 
analysis. In Figure \ref{m1s1} and Table  \ref{tab:m1s1} we show the
error and rates  at time $T=1$.

\begin{figure}[!htbp] 
\includegraphics[width=0.75\textwidth]{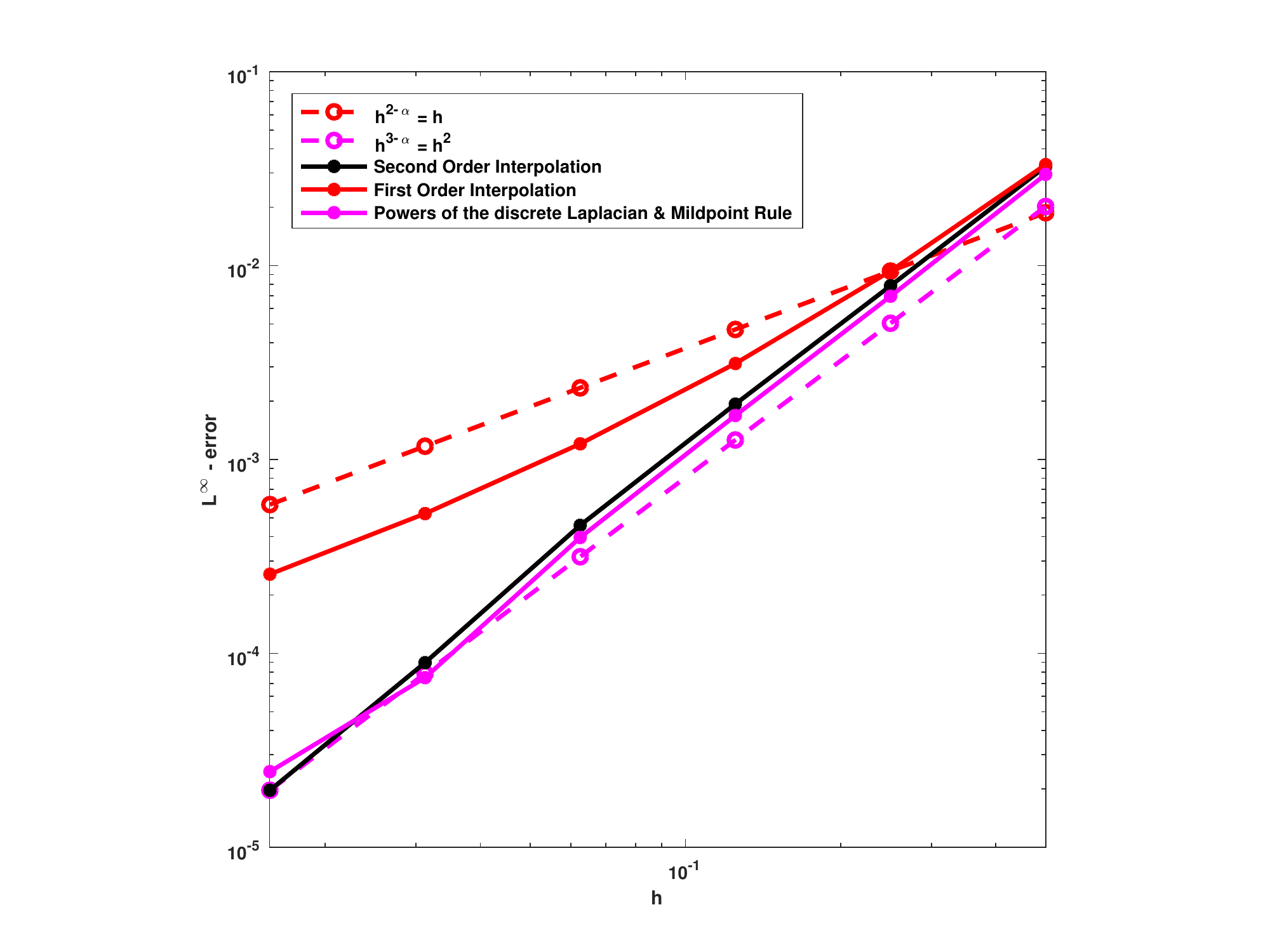}
\vspace{-5mm}\caption{\footnotesize$L^\infty$-error at $T=1$ with $I=[-5000,5000]$  for the exact solution \eqref{CauchyDistr} of \eqref{StefanProb} with $\alpha=1$, $\varphi(\xi)=\xi$ and $g\equiv0$.}
\label{m1s1}
\end{figure}

\begin{table}[!htbp]\footnotesize
\centering
\begin{tabular}{l|ll|ll|ll|ll}
 \ \ $h$&   MpR                    &  &     FOI                  &  &        SOI               &  &        PDL               &  \\ \cline{2-9} 
 & \multicolumn{1}{l|}{error} & $\gamma$  & \multicolumn{1}{l|}{error} & $\gamma$  & \multicolumn{1}{l|}{error} & $\gamma$  & \multicolumn{1}{l|}{error} & $\gamma$ \\ \hline
 $5.00$e-$1$& \multicolumn{1}{l|}{$2.95$e-$2$} &  & \multicolumn{1}{l|}{$ 3.31$e-$2 $} &  & \multicolumn{1}{l|}{$ 3.24$e-$3 $} &  & \multicolumn{1}{l|}{$2.95$e-$2$} &                       \\
 $2.50$e-$1$& \multicolumn{1}{l|}{$6.94 $e-$3 $} &2.08  & \multicolumn{1}{l|}{$9.40 $e-$3 $} &1.82  & \multicolumn{1}{l|}{$ 7.89$e-$3 $} &2.04  & \multicolumn{1}{l|}{$6.94 $e-$3 $} &2.08                       \\
$1.25$e-$1$& \multicolumn{1}{l|}{$1.68 $e-$3 $} &2.04  & \multicolumn{1}{l|}{$3.12 $e-$3 $} &1.58  & \multicolumn{1}{l|}{$ 1.93 $e-$3 $} &2.03  & \multicolumn{1}{l|}{$1.68 $e-$3 $} &2.04                       \\
$6.25$e-$2$& \multicolumn{1}{l|}{$3.95 $e-$4 $} &2.09  & \multicolumn{1}{l|}{$1.20 $e-$4 $} &1.37  & \multicolumn{1}{l|}{$ 4.57 $e-$4 $}&2.08  & \multicolumn{1}{l|}{$3.95 $e-$4 $} &2.09                     \\
$3.13$e-$2$& \multicolumn{1}{l|}{$7.50 $e-$5 $} &2.40  & \multicolumn{1}{l|}{$5.26 $e-$4 $} &1.19  & \multicolumn{1}{l|}{$ 8.96 $e-$5 $} & 2.35  & \multicolumn{1}{l|}{$7.50 $e-$5 $} &2.40                       \\
$1.56$e-$2$& \multicolumn{1}{l|}{$2.45 $e-$5 $} &1.61  & \multicolumn{1}{l|}{$2.56 $e-$4 $} &1.03  & \multicolumn{1}{l|}{$1.97 $e-$ 5$} &2.18  & \multicolumn{1}{l|}{$2.45 $e-$5 $} &1.61                      
\end{tabular}
\vspace{2mm}
\caption{\footnotesize$L^\infty$-error at $T=1$ with $I=[-5000,5000]$  for the exact solution \eqref{CauchyDistr} of \eqref{StefanProb} with $\alpha=1$, $\varphi(\xi)=\xi$ and $g\equiv0$.}
\label{tab:m1s1}
\end{table}

\noindent{\bf Conclusion:} The Midpoint Rule and the Powers of the
Discrete Laplacian coincide when $\alpha=1$ and $N=1$ (see Section 
\ref{sec:discspa}). The theoretical convergence rates are confirmed
for all tested methods. Note that when $h=1.56\textup{e-}2$, the rate
for MpR and PDL is $\gamma=1.61$ in stead of $\gamma\sim2$. This is a
consequence of how the rates are calculated since the previous
value $\gamma=2.40$ is much better than predicted. If we calculate the
rate for $h=1.56\textup{e-}2$ 
with respect to $h=6.25\textup{e-}2$, we get $\gamma=2.01$.

\subsection{Fractional Porous Medium Equation  (explicit scheme)}

We consider \eqref{StefanProb} with $\varphi(\xi)=\xi^2$. An explicit
solution of that problem is known when $\alpha=\frac{1}{3}$
(cf. \cite{Hua14}), but its slow decay at infinity makes it difficult
to find a reasonably small computational domain $I$. 
To overcome this issue we impose $v=(t+1)\e^{-|x|^2}$ as
a solution by taking $g$ according to \eqref{rhsStefanProb}. Note that
now the solution has exponential decay. The errors and rates of
convergence are shown for $I=[-100,100]$ at time $T=1$ for
$\alpha=0.5$ (resp. $\alpha=1.5$) in Figure \ref{m2s0515} and Table
\ref{tab:m2s05} (resp. Table \ref{tab:m2s15}).

\begin{figure}[!htbp]
 \hspace{-0.3cm}
  \begin{minipage}[b]{0.5\textwidth}
    \includegraphics[width=\textwidth]{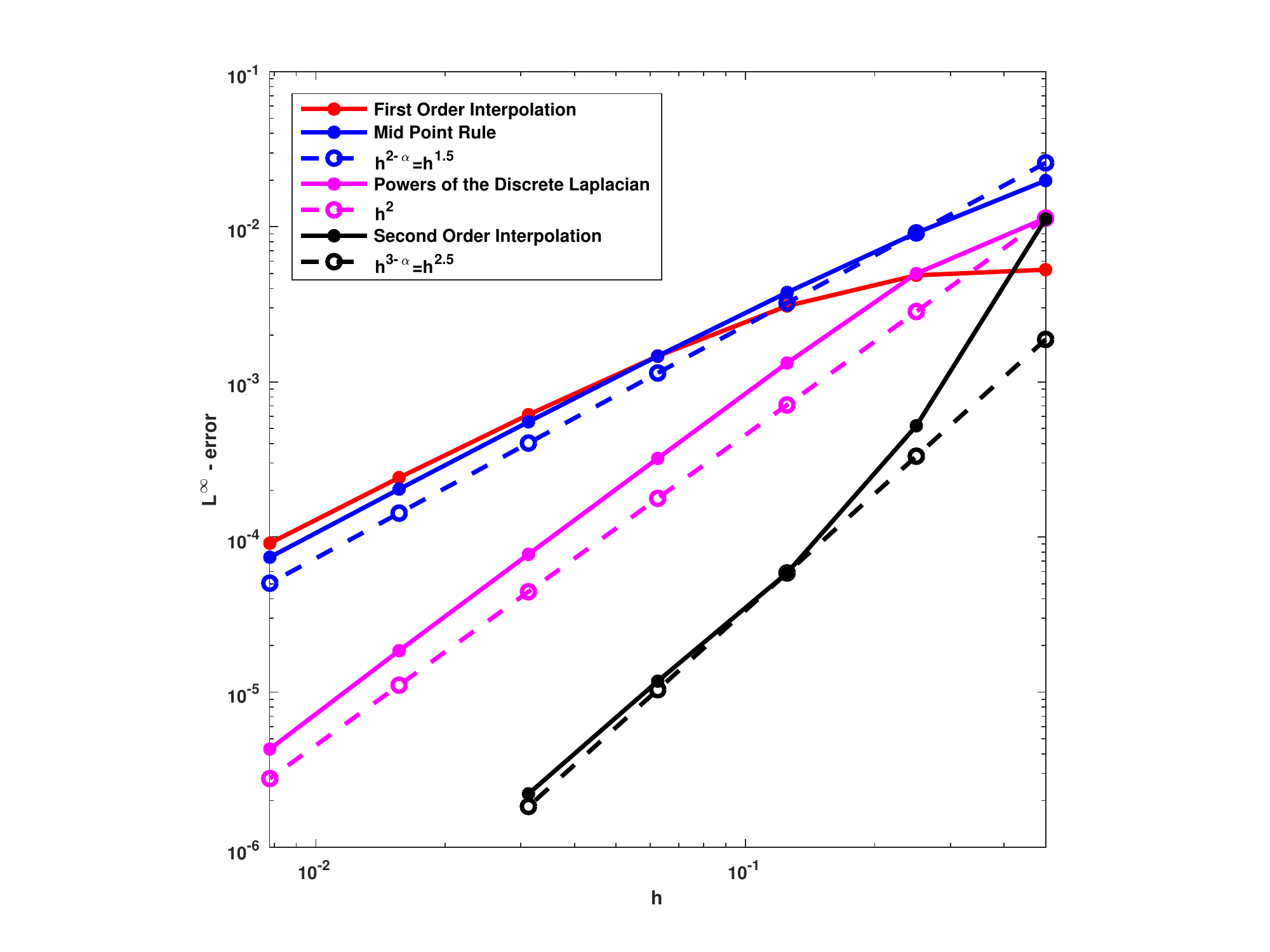}
  \end{minipage}
  \begin{minipage}[b]{0.5\textwidth}
    \includegraphics[width=\textwidth]{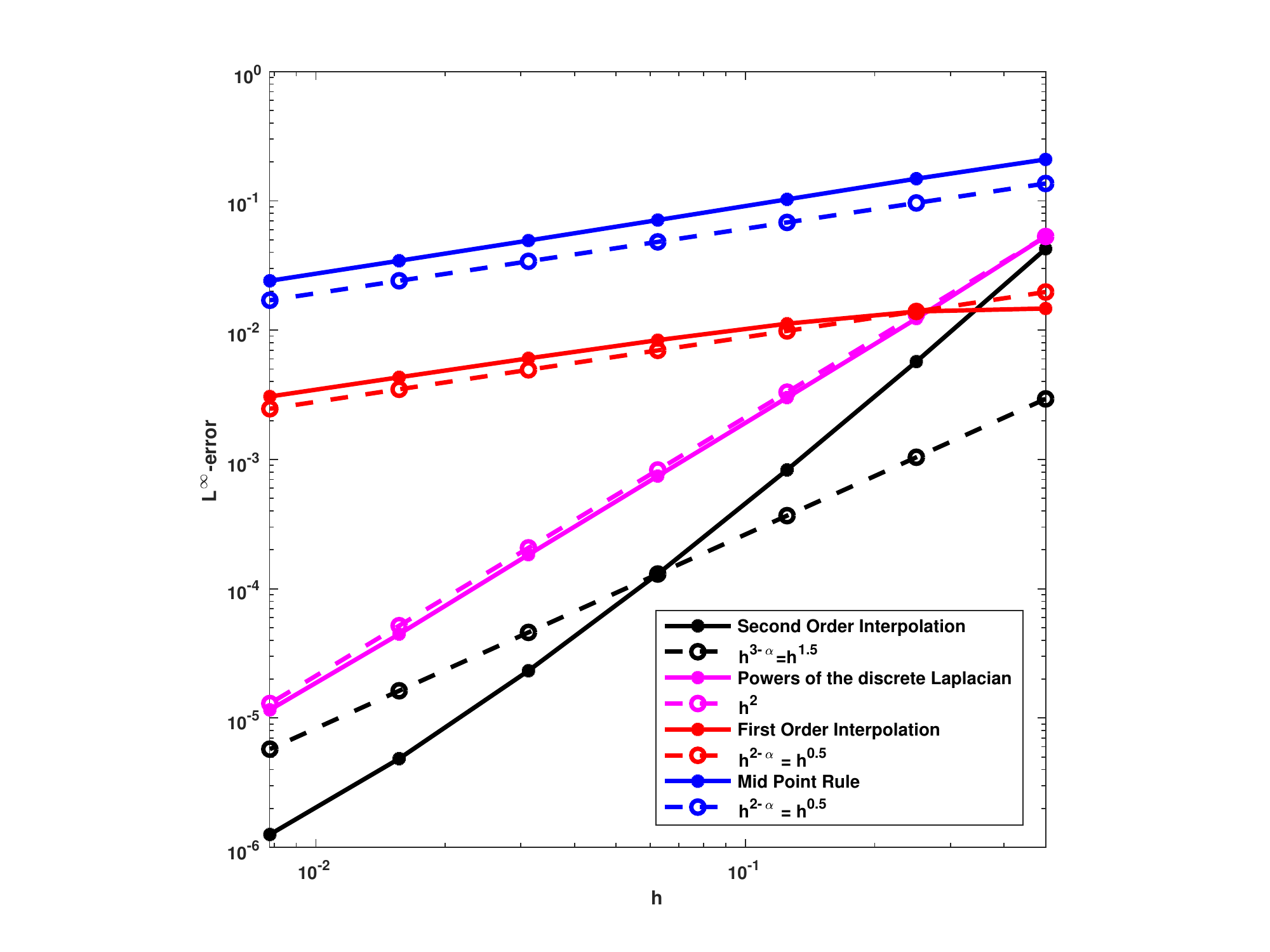}
  \end{minipage}
  \vspace{-5mm}
  \caption{\footnotesize$L^\infty$-error at $T=1$ with $I=[-100,100]$ for \eqref{StefanProb} with $\varphi(\xi)=\xi^2$ and $g$ by \eqref{rhsStefanProb} with $v(x,t)=(t+1)\e^{-|x|^2}$. Left: $\alpha=0.5$. Right: $\alpha=1.5$.}
  \label{m2s0515}
\end{figure}

\begin{table}[!htbp]\footnotesize
\centering
\begin{tabular}{l|ll|ll|ll|ll}
 \ \ $h$&   MPR                    &  &     FOI                  &  &        SOI               &  &        PDL               &  \\ \cline{2-9} 
 & \multicolumn{1}{l|}{error} & $\gamma$  & \multicolumn{1}{l|}{error} & $\gamma$  & \multicolumn{1}{l|}{error} & $\gamma$  & \multicolumn{1}{l|}{error} & $\gamma$ \\ \hline
 $5.00$e-$1$& \multicolumn{1}{l|}{$1.98 $e-$2 $} &        & \multicolumn{1}{l|}{$ 5.29$e-$3 $} &  & \multicolumn{1}{l|}{$1.12 $e-$ 2$} &  & \multicolumn{1}{l|}{$1.14 $e-$2 $} &                       \\
 $2.50$e-$1$& \multicolumn{1}{l|}{$9.13 $e-$3 $} &1.22  & \multicolumn{1}{l|}{$ 4.86$e-$3 $} &0.12  & \multicolumn{1}{l|}{$5.21 $e-$4 $} &4.42  & \multicolumn{1}{l|}{$5.00 $e-$3 $} & 1.19                      \\
$1.25$e-$1$& \multicolumn{1}{l|}{$3.77 $e-$3 $} &1.28  & \multicolumn{1}{l|}{$ 3.09$e-$3 $} &0.65  & \multicolumn{1}{l|}{$5.87 $e-$ 5$} &3.15  & \multicolumn{1}{l|}{$1.32 $e-$3 $} & 1.91                      \\
$6.25$e-$2$& \multicolumn{1}{l|}{$1.47 $e-$3 $} &1.36  & \multicolumn{1}{l|}{$ 1.47$e-$3 $} &1.07  & \multicolumn{1}{l|}{$1.18 $e-$5 $} &2.31  & \multicolumn{1}{l|}{$3.21 $e-$4 $} &  2.04                     \\
$3.13$e-$2$& \multicolumn{1}{l|}{$ 5.53$e-$4 $} &1.40 & \multicolumn{1}{l|}{$ 6.15$e-$4 $} &1.25  & \multicolumn{1}{l|}{$2.21 $e-$6$} &2.41  & \multicolumn{1}{l|}{$7.74 $e-$5 $} &  2.05                     \\
$1.56$e-$2$& \multicolumn{1}{l|}{$2.04 $e-$4 $} &1.44  & \multicolumn{1}{l|}{$ 2.41$e-$4 $} &1.35  & \multicolumn{1}{l|}{--------} &-----  & \multicolumn{1}{l|}{$ 1.85$e-$5 $} &    2.06                         \\
$7.81$e-$3$& \multicolumn{1}{l|}{$7.41 $e-$5 $} &1.46  & \multicolumn{1}{l|}{$ 9.13$e-$ 5$} &1.40  & \multicolumn{1}{l|}{--------} &-----  & \multicolumn{1}{l|}{$ 4.30$e-$6 $} &     2.11                  
\end{tabular}
\vspace{2mm}
\caption{\footnotesize$L^\infty$-error at $T=1$ with $I=[-100,100]$ for \eqref{StefanProb} with $\alpha=0.5$, $\varphi(\xi)=\xi^2$ and $g$ by \eqref{rhsStefanProb} with $v(x,t)=(t+1)\e^{-|x|^2}$.}
\label{tab:m2s05}
\end{table}

\begin{table}[!htbp]\footnotesize
\centering
\begin{tabular}{l|ll|ll|ll|ll}
 $h$&   MPR                    &  &     FOI                  &  &        SOI               &  &        PDL               &  \\ \cline{2-9} 
 & \multicolumn{1}{l|}{error} & $\gamma$  & \multicolumn{1}{l|}{error} & $\gamma$  & \multicolumn{1}{l|}{error} & $\gamma$  & \multicolumn{1}{l|}{error} & $\gamma$ \\ \hline
 $5.00$e-$1$& \multicolumn{1}{l|}{$2.10 $e-$1 $} &  & \multicolumn{1}{l|}{$ 1.47$e-$ 2$} &  & \multicolumn{1}{l|}{$4.26 $e-$2 $} &  & \multicolumn{1}{l|}{$5.30 $e-$ 2$} &                       \\
 $2.50$e-$1$& \multicolumn{1}{l|}{$1.49 $e-$1 $} & 0.49  & \multicolumn{1}{l|}{$ 1.40$e-$2 $} &0.08 & \multicolumn{1}{l|}{$5.71 $e-$3 $} &2.90  & \multicolumn{1}{l|}{$1.23 $e-$ 2$} & 2.11                      \\
$1.25$e-$1$& \multicolumn{1}{l|}{$1.03 $e-$1 $} &0.53  & \multicolumn{1}{l|}{$ 1.12$e-$2 $} &0.31  & \multicolumn{1}{l|}{$8.30 $e-$4 $} &2.78  & \multicolumn{1}{l|}{$3.01 $e-$ 3$} &  2.03                     \\
$6.25$e-$2$& \multicolumn{1}{l|}{$7.11 $e-$2 $} & 0.53 & \multicolumn{1}{l|}{$ 8.37$e-$ 3$} &0.42  & \multicolumn{1}{l|}{$ 1.30$e-$4 $} &2.67  & \multicolumn{1}{l|}{$ 7.44$e-$ 4$} &   2.16                    \\
$3.13$e-$2$& \multicolumn{1}{l|}{$4.93 $e-$2 $} & 0.53  & \multicolumn{1}{l|}{$6.05 $e-$ 3$} & 0.47 & \multicolumn{1}{l|}{$2.32 $e-$5 $} &2.48  & \multicolumn{1}{l|}{$ 1.83$e-$4 $} &  2.02                     \\
$1.56$e-$2$& \multicolumn{1}{l|}{$3.44 $e-$2 $} &0.52  & \multicolumn{1}{l|}{$4.32 $e-$ 3$} & 0.49  & \multicolumn{1}{l|}{$4.85 $e-$6 $} &2.25  & \multicolumn{1}{l|}{$ 4.46$e-$ 5$} &  2.04
 \\
$7.81$e-$3$& \multicolumn{1}{l|}{$2.41 $e-$2 $} &0.51  & \multicolumn{1}{l|}{$3.07 $e-$3 $} &0.49  & \multicolumn{1}{l|}{$1.25 $e-$6 $} &1.94  & \multicolumn{1}{l|}{$1.16 $e-$5 $} &   1.95                    
\end{tabular}
\vspace{2mm}
\caption{\footnotesize$L^\infty$-error at  $T=1$ with $I=[-100,100]$ for \eqref{StefanProb} with $\alpha=1.5$, $\varphi(\xi)=\xi^2$ and $g$ by \eqref{rhsStefanProb} with $v(x,t)=(t+1)\e^{-|x|^2}$.}
\label{tab:m2s15}
\end{table}

\noindent{\bf Conclusion:} When $\alpha=0.5$ all the expected rates are recovered. For the Second Order Interpolation, we excluded the last two rows because the error was much smaller compared to the other methods -- see Table \ref{tab:m2s05}. It is also worth noting that when $\alpha=1.5$ -- see Table \ref{tab:m2s15}, the expected rate of convergence for SOI is $\gamma=1.5$ while the rate obtained by the experiment is $\sim2.0$.

\begin{figure}[!htbp]
\centering
\includegraphics[width=0.75\textwidth]{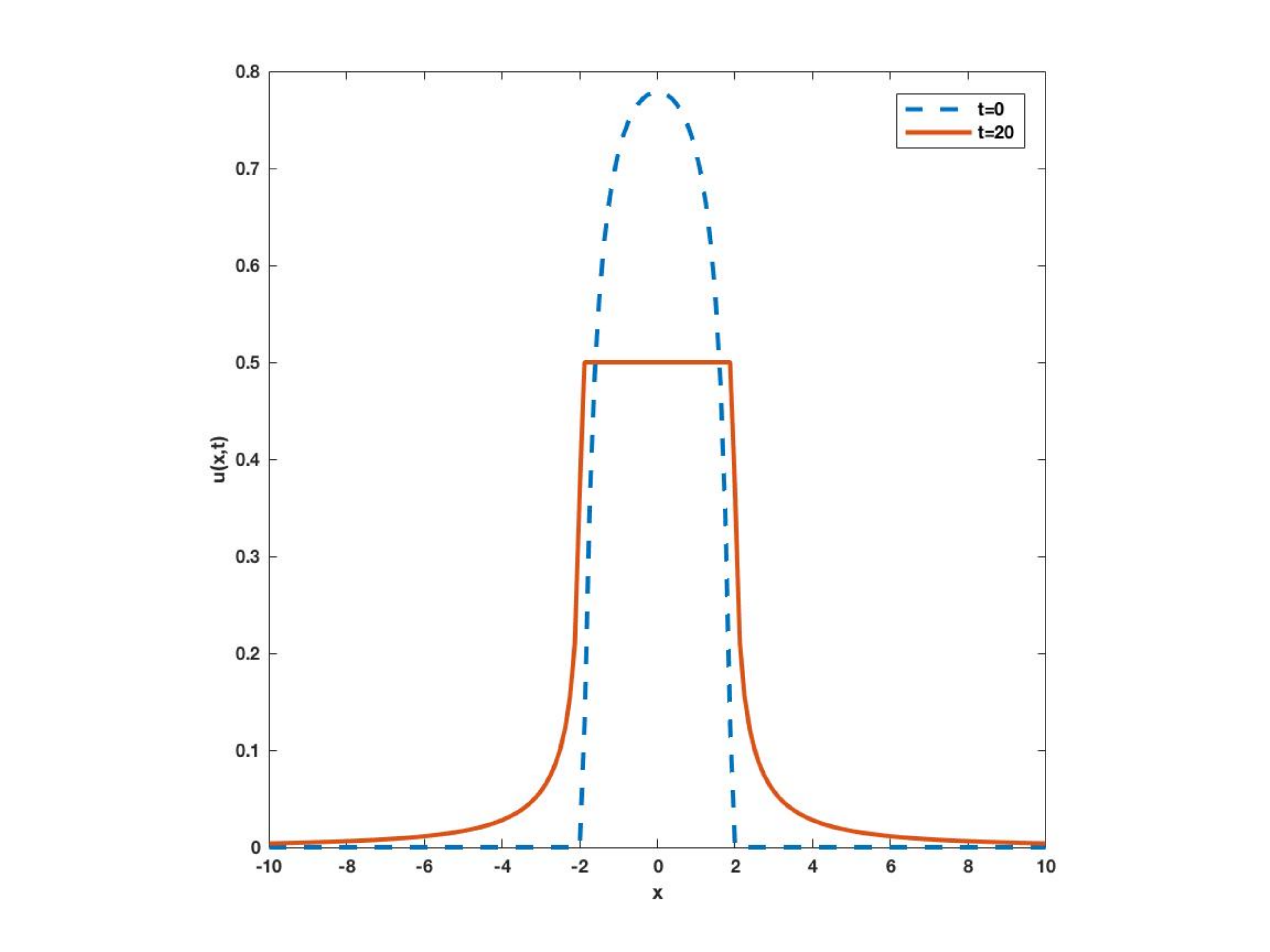}
\vspace{-5mm}
\caption{\footnotesize Solution of \eqref{StefanProb} for  $\alpha=1$, $\varphi(\xi)=\max\{0,\xi-0.5\}$, and $g\equiv0$.}
\label{StefanSol}
\end{figure}

\begin{figure}[!htbp] 
\includegraphics[width=0.75\textwidth]{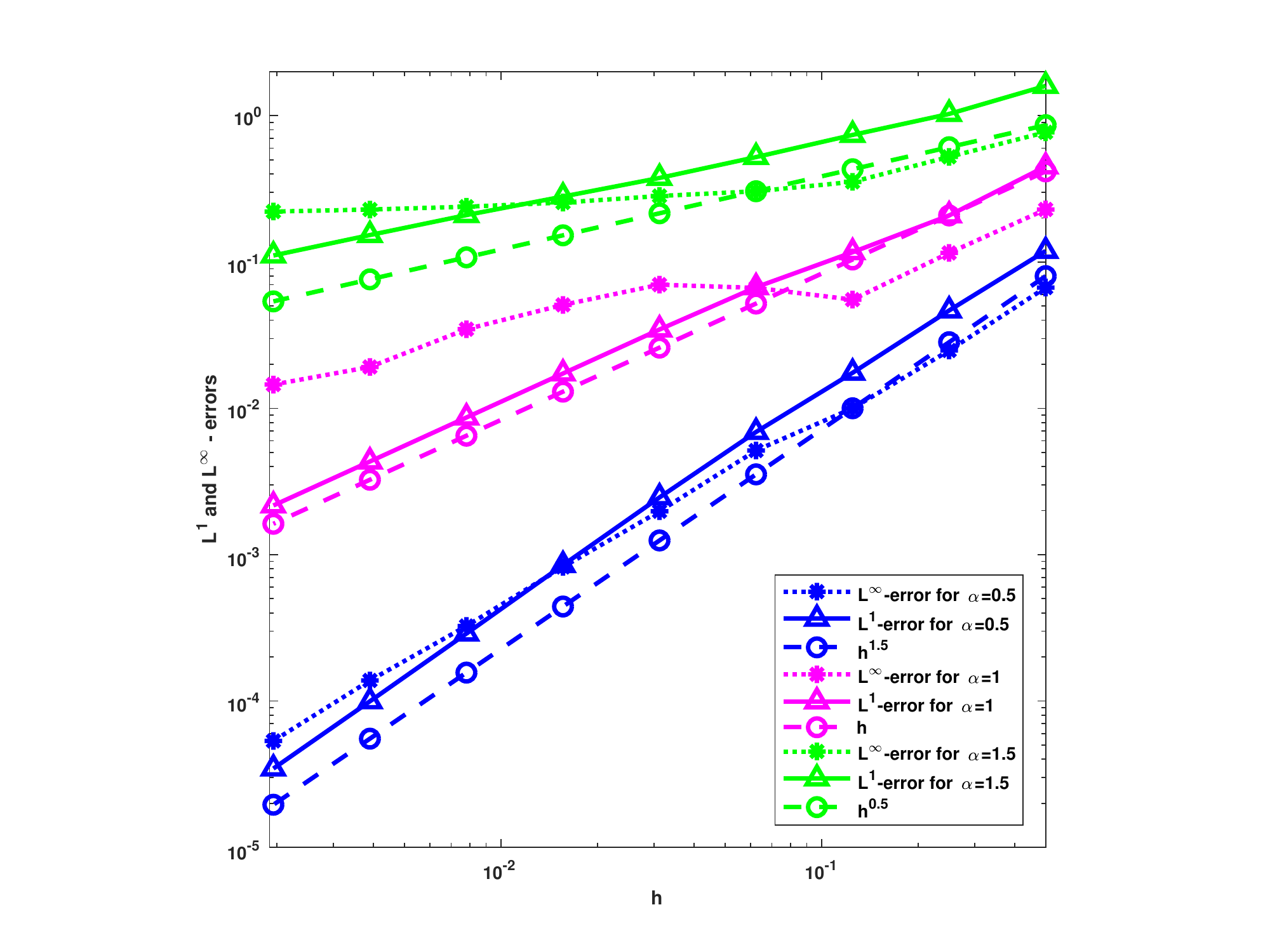}
\vspace{-5mm}
\caption{\footnotesize $L^\infty$- and $L^1$-errors with MpR at $T=1$ with $I=[-100,100]$ for \eqref{StefanProb} with $\varphi$ given by \eqref{varphiST} and $g$ by \eqref{rhsStefanProb} with $v(x,t)=(t+1)\e^{-|x|^2}$.}
\label{StefanComp}
\end{figure}

\subsection{Stefan type problem  (explicit scheme)}
\label{sec:StefanTyepProblemExplicit}

First we consider \eqref{StefanProb} with
$\varphi(\xi)=\max\{0,\xi-0.5\}$ (a globally Lipschitz function) and
$g\equiv0$. Solutions of such problems can loose regularity as we see
in Figure \ref{StefanSol} (above). Here we have used $C_\textup{c}^\infty(\R)$
initial data $u_0(x)=\e^{-\frac{1}{4-x^2}}\mathbf{1}_{[-2,2]}(x)$.

Next we consider \eqref{StefanProb} with 
\begin{equation}\label{varphiST}
\varphi(\xi)=\left\{
\begin{array}{lll}
\xi&\textup{if}& \xi<0.2\\
0.2&\textup{if}&0.2\leq \xi <0.4\\
\xi-0.2&\textup{if}& \xi\geq 0.4,
\end{array}
\right.
\end{equation}
and impose $v(x,t)=(t+1)\e^{-|x|^2}$ as the solution by taking $g$ as
in \eqref{rhsStefanProb}. We run experiments for $\alpha=0.5$,
$\alpha=1$, and $\alpha=1.5$ up to $T=1$ with $I=[-100,100]$ using the
Midpoint Rule. The results are given in Figure \ref{StefanComp} (above) and
Table \ref{tab:stefanLinf} (below). 

\noindent{\bf Conclusion:} It is interesting to note that even when
the solution $u\in C^\infty(\R^N)$, $\varphi(u)$ is just a Lipschitz
function. Therefore $-(-\Delta)^\frac{1}{2}[\varphi(u)]$ and
$-(-\Delta)^\frac{3}{4}[\varphi(u)]$ do not exist in every point and 
$u$ is not a classical solution. This fact
has a strong effect on the convergence rates in $L^\infty$, but not in
$L^1$, see Figure \ref{StefanComp} and Table
\ref{tab:stefanLinf}.

\begin{remark}
  \label{Stefan}
In \cite{BrChQu12} the authors study a nonlocal Stefan
problem of the form \eqref{E} with $\sigma\equiv0$ and
$\mu$ having a nonnegative continuous compactly supported density
(\eqref{muas} holds). This case is easier than the above example since
$\Operator$ is bounded. Nonetheless it has discontinuous solutions
that are computed numerically in Chapter 7 of \cite{BrChQu12}. Our
results provide a rigorous justification for these computations.
\end{remark}

\begin{table}[!htbp]\footnotesize
\centering
\begin{tabular}{l|llll|llll|llll}
 $h$&   $\alpha=\frac{1}{2}$&  & & &     $\alpha=1 $              &  & & &        $\alpha=\frac{3}{2}$         & &      &   \\ \cline{2-13} 
 & \multicolumn{1}{l|}{$ L^\infty$}& \multicolumn{1}{l|}{$\gamma$}  & \multicolumn{1}{l|}{$ L^1$}& \multicolumn{1}{l|}{$\gamma$} & \multicolumn{1}{l|}{$L^\infty$} & \multicolumn{1}{l|}{$\gamma$} &  \multicolumn{1}{l|}{$ L^1$}& \multicolumn{1}{l|}{$\gamma$}  & \multicolumn{1}{l|}{$L^\infty$} & \multicolumn{1}{l|}{$\gamma$} & \multicolumn{1}{l|}{$ L^1$}& $\gamma$   \\ \hline
 $5.0$e-$1$& \multicolumn{1}{l|}{$ 6.7$e-$ 2$}& \multicolumn{1}{l|}{}  &\multicolumn{1}{l|}{$1.3 $e-$ 1$}&& \multicolumn{1}{l|}{$2.3 $e-$ 1$} & \multicolumn{1}{l|}{} &\multicolumn{1}{l|}{$4.5 $e-$ 1$} &  & \multicolumn{1}{l|}{$7.7$e-$1 $} &  \multicolumn{1}{l|}{}  & \multicolumn{1}{l|}{$1.6 $e-$ 0$} &                    \\
 $2.5$e-$1$& \multicolumn{1}{l|}{$2.5 $e-$ 2$}&\multicolumn{1}{l|}{1.4}  &\multicolumn{1}{l|}{$4.7 $e-$ 2$}&1.4 & \multicolumn{1}{l|}{$ 1.2$e-$ 1$} &\multicolumn{1}{l|}{1} & \multicolumn{1}{l|}{$ 2.1$e-$1 $} &1.1& \multicolumn{1}{l|}{$5.2 $e-$1 $} & \multicolumn{1}{l|}{0.6}  & \multicolumn{1}{l|}{$1.0 $e-$ 0$} &  0.6                  \\
$1.3$e-$1$& \multicolumn{1}{l|}{$ 1.0$e-$ 2$}& \multicolumn{1}{l|}{1.3} &\multicolumn{1}{l|}{$ 1.8$e-$ 2$}&1.4& \multicolumn{1}{l|}{$ 5.5$e-$ 2$} &\multicolumn{1}{l|}{ 1.1} &\multicolumn{1}{l|}{$1.2 $e-$ 1$} & 0.8& \multicolumn{1}{l|}{$3.5 $e-$1 $} & \multicolumn{1}{l|}{0.6}  &  \multicolumn{1}{l|}{$ 7.4$e-$ 1$} &       0.5       \\
$6.3$e-$2$& \multicolumn{1}{l|}{$ 5.2$e-$ 3$}& \multicolumn{1}{l|}{1.0}  &\multicolumn{1}{l|}{$6.9 $e-$ 3$}&1.3& \multicolumn{1}{l|}{$ 6.7$e-$ 2$} &  \multicolumn{1}{l|}{$<$0}& \multicolumn{1}{l|}{$6.7 $e-$ 2$} & 0.8 & \multicolumn{1}{l|}{$3.0 $e-$1 $} &  \multicolumn{1}{l|}{0.2} &  \multicolumn{1}{l|}{$ 5.2$e-$ 1$} &        0.5                \\
$3.1$e-$2$& \multicolumn{1}{l|}{$2.0 $e-$ 3$}& \multicolumn{1}{l|}{1.4}   &\multicolumn{1}{l|}{$ 2.5$e-$ 3$}  &1.5& \multicolumn{1}{l|}{$7.0 $e-$ 2$} & \multicolumn{1}{l|}{$<$0} & \multicolumn{1}{l|}{$3.5 $e-$2 $} & 1.0& \multicolumn{1}{l|}{$2.8 $e-$ 1$} &   \multicolumn{1}{l|}{0.1}    & \multicolumn{1}{l|}{$ 3.8$e-$ 1$} &       0.5           \\
$1.6$e-$2$& \multicolumn{1}{l|}{$ 8.2$e-$ 4$}& \multicolumn{1}{l|}{1.3}  &\multicolumn{1}{l|}{$ 8.5$e-$ 4$}&1.5& \multicolumn{1}{l|}{$ 5.1$e-$ 2$} &  \multicolumn{1}{l|}{0.5} &\multicolumn{1}{l|}{$ 1.7$e-$ 2$} & 1.0 & \multicolumn{1}{l|}{$2.6 $e-$1 $} & \multicolumn{1}{l|}{0.2} & \multicolumn{1}{l|}{$ 2.8$e-$ 1$} & 0.4  \\
$7.8$e-$3$& \multicolumn{1}{l|}{$ 3.3$e-$ 4$}& \multicolumn{1}{l|}{1.3} &\multicolumn{1}{l|}{$ 2.9$e-$ 4$}& 1.6& \multicolumn{1}{l|}{$3.5 $e-$ 2$} & \multicolumn{1}{l|}{0.6} & \multicolumn{1}{l|}{$ 8.5$e-$ 3$} &  1.0& \multicolumn{1}{l|}{$ 2.4$e-$1 $} & \multicolumn{1}{l|}{0.1} & \multicolumn{1}{l|}{$ 2.1$e-$ 1$} &  0.4\\
$3.9$e-$3$& \multicolumn{1}{l|}{$ 1.4$e-$ 4$}& \multicolumn{1}{l|}{1.2}  &\multicolumn{1}{l|}{$ 9.9$e-$ 5$}&1.5& \multicolumn{1}{l|}{$ 1.9$e-$ 2$} & \multicolumn{1}{l|}{0.9} & \multicolumn{1}{l|}{$ 4.3$e-$ 3$} & 1.0& \multicolumn{1}{l|}{$ 2.3$e-$1 $} &  \multicolumn{1}{l|}{0.1}   & \multicolumn{1}{l|}{$ 1.5$e-$ 1$} &     0.5         \\
$2.0$e-$3$& \multicolumn{1}{l|}{$ 5.3$e-$ 5$}& \multicolumn{1}{l|}{1.4}  &\multicolumn{1}{l|}{$ 3.4$e-$ 5$}&1.5& \multicolumn{1}{l|}{$ 1.5$e-$ 2$} & \multicolumn{1}{l|}{0.4} &\multicolumn{1}{l|}{$ 2.2$e-$3 $} & 1.0& \multicolumn{1}{l|}{$2.2 $e-$1 $} &   \multicolumn{1}{l|}{0.1}   &   \multicolumn{1}{l|}{$ 1.1$e-$ 1$} &   0.5                      
\end{tabular}
\vspace{2mm}
\caption{\footnotesize$L^\infty$- and $L^1$-errors with MpR at $T=1$ with $I=[-100,100]$ for \eqref{StefanProb} with $\varphi$ given by \eqref{varphiST} and $g$ by \eqref{rhsStefanProb} with $v(x,t)=(t+1)\e^{-|x|^2}$.}
\label{tab:stefanLinf}
\end{table}

\subsection{Fractional Fast Diffusion Equation}
In the fast diffusion case a new difficulty appears: The
nonlinearity, $\varphi(\xi)=\xi^m$ for $m\in(0,1)$, is no longer locally
Lipschitz, and the \eqref{CFL} condition can only hold for implicit
schemes or under approximation of $\varphi$. Even in the local case
there are few results for this case, and the results in this paper are
as far as we know the first in the fractional case.

\begin{figure}[!htbp] 
\centering
\includegraphics[width=0.75\textwidth]{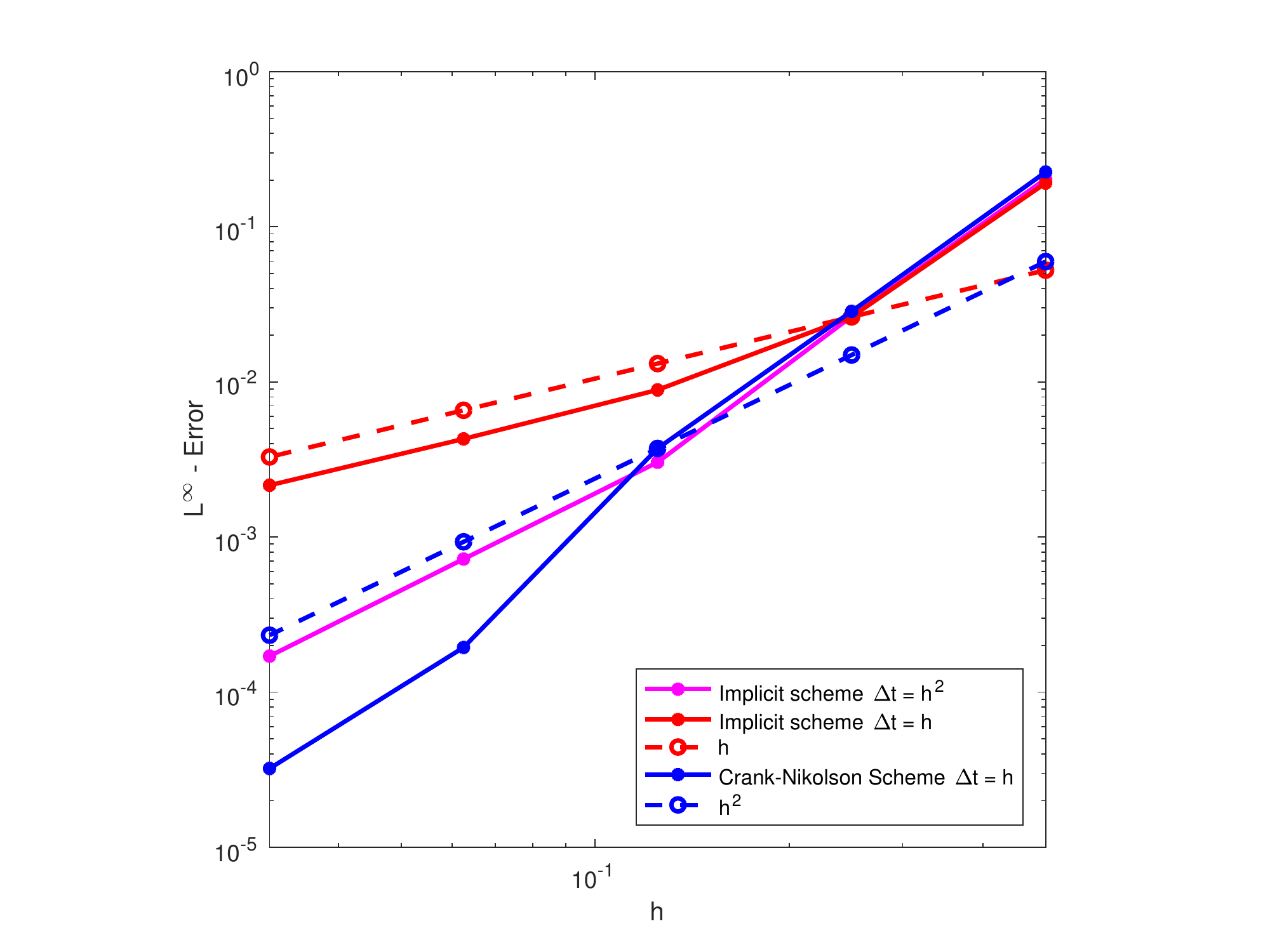}
\vspace{-5mm}
\caption{\footnotesize $L^\infty$-error with MpR at $T=1$ with $I=[-4,4]$ for \eqref{StefanProb} with $\varphi(\xi)=\xi^{\frac{1}{2}}$, $\alpha=1$ and $g$ given by \eqref{rhsStefanProb} with $v(x,t)=\sqrt{(t+1)}\e^{-|x|^8}$.}
\label{Impl_meth}
\end{figure}

\begin{table}[!htbp]\footnotesize
\centering
\begin{tabular}{l|ll|ll|ll}
 $h$&  Implicit                    & $\Delta t\sim h$ &     Implicit                  & $\Delta t\sim h^2$ &        Cr-Ni        &$\Delta t\sim h$          \\ \cline{2-7} 
 & \multicolumn{1}{l|}{error} & $\gamma$  & \multicolumn{1}{l|}{error} & $\gamma$  & \multicolumn{1}{l|}{error} & $\gamma$    \\ \cline{1-7} 
 $5.00$e-$1$& \multicolumn{1}{l|}{$1.91 $e-$1 $} &  & \multicolumn{1}{l|}{$ 2.03$e-$ 1$} &  & \multicolumn{1}{l|}{$2.25 $e-$1 $}   \\
 $2.50$e-$1$& \multicolumn{1}{l|}{$2.63 $e-$2 $} & 2.86  & \multicolumn{1}{l|}{$ 2.64$e-$2 $} &2.95 & \multicolumn{1}{l|}{$2.85 $e-$2 $} &2.98                        \\
$1.25$e-$1$& \multicolumn{1}{l|}{$8.88 $e-$3 $} & 1.56  & \multicolumn{1}{l|}{$ 3.03$e-$3 $} &3.12  & \multicolumn{1}{l|}{$3.73 $e-$3 $} &2.93                       \\
$6.25$e-$2$& \multicolumn{1}{l|}{$4.29 $e-$3 $} & 1.05 & \multicolumn{1}{l|}{$ 7.21$e-$ 4$} &2.07  & \multicolumn{1}{l|}{$ 1.94$e-$4 $} &4.26                     \\
$3.13$e-$2$& \multicolumn{1}{l|}{$2.16 $e-$3 $} & 1.00  & \multicolumn{1}{l|}{$1.71 $e-$ 4$} & 2.08 & \multicolumn{1}{l|}{$3.22 $e-$5 $} &2.59                        \\               
\end{tabular}
\vspace{2mm}
\caption{\footnotesize$L^\infty$-error with MpR at $T=1$ with $I=[-4,4]$ for \eqref{StefanProb} with $\varphi(\xi)=\xi^{\frac{1}{2}}$, $\alpha=1$ and $g$ given by \eqref{rhsStefanProb} with $v(x,t)=\sqrt{(t+1)}\e^{-|x|^8}$.}
\label{tab:Impl_meth}
\end{table}

\subsubsection{Implicit scheme}

Implicit schemes automatically satisfy \eqref{CFL} at the price of
having to solve a nonlinear system of equations at every time
step. This is computationally very expensive. For simplicity we use
the standard nonlinear solver ``\texttt{fsolve}'' in Matlab. A more
adapted solver could probably significantly reduce the computational
time.  Due to the computational cost, we take a very small
domain and a very regular solution. We consider
\eqref{StefanProb} with $\varphi(\xi)=\xi^{\frac{1}{2}}$, $\alpha=1$
and $g$ given by \eqref{rhsStefanProb} with
$v(x,t)=\sqrt{(t+1)}\e^{-|x|^8}$ (the solution). We run the
experiments with the Midpoint Rule up to time $T=1$ in the domain
$I=[-4,4]$. For the time discretization we choose $\Delta t \sim h$
and $\Delta t \sim h^2$. We also consider the Crank-Nicolson method ($\theta=\frac{1}{2}$)  with
$\Delta t \sim h$ which means that
\eqref{CFL} is satisfied. The results are shown in Figure
\ref{Impl_meth} and Table \ref{tab:Impl_meth} (above).

\smallskip
\noindent{\bf Conclusion:} For the implicit method with the Midpoint
Rule (a second  order  method when $\alpha=1$), the expected error is
$O(h^2+\Delta t)$. When $\Delta t\sim h$ the error is clearly governed
by $\Delta t$ and so the rate of convergence is linear.  For $\Delta
t\sim h^2$, the error introduced by the time discretization is
proportional to $h^2$ and so the rate of convergence is quadratic. For
the Crank-Nicolson method, the expected error is $O(h^2+\Delta t^2)$,
so with $\Delta t\sim h$ we should see second order
convergence. However the observed rates are better, see Table
\ref{tab:Impl_meth}. The Crank-Nicolson methods seems to be better than
both implicit methods in terms of computational time and accuracy.

\begin{figure}[!htbp]
\centering
\includegraphics[width=0.75\textwidth]{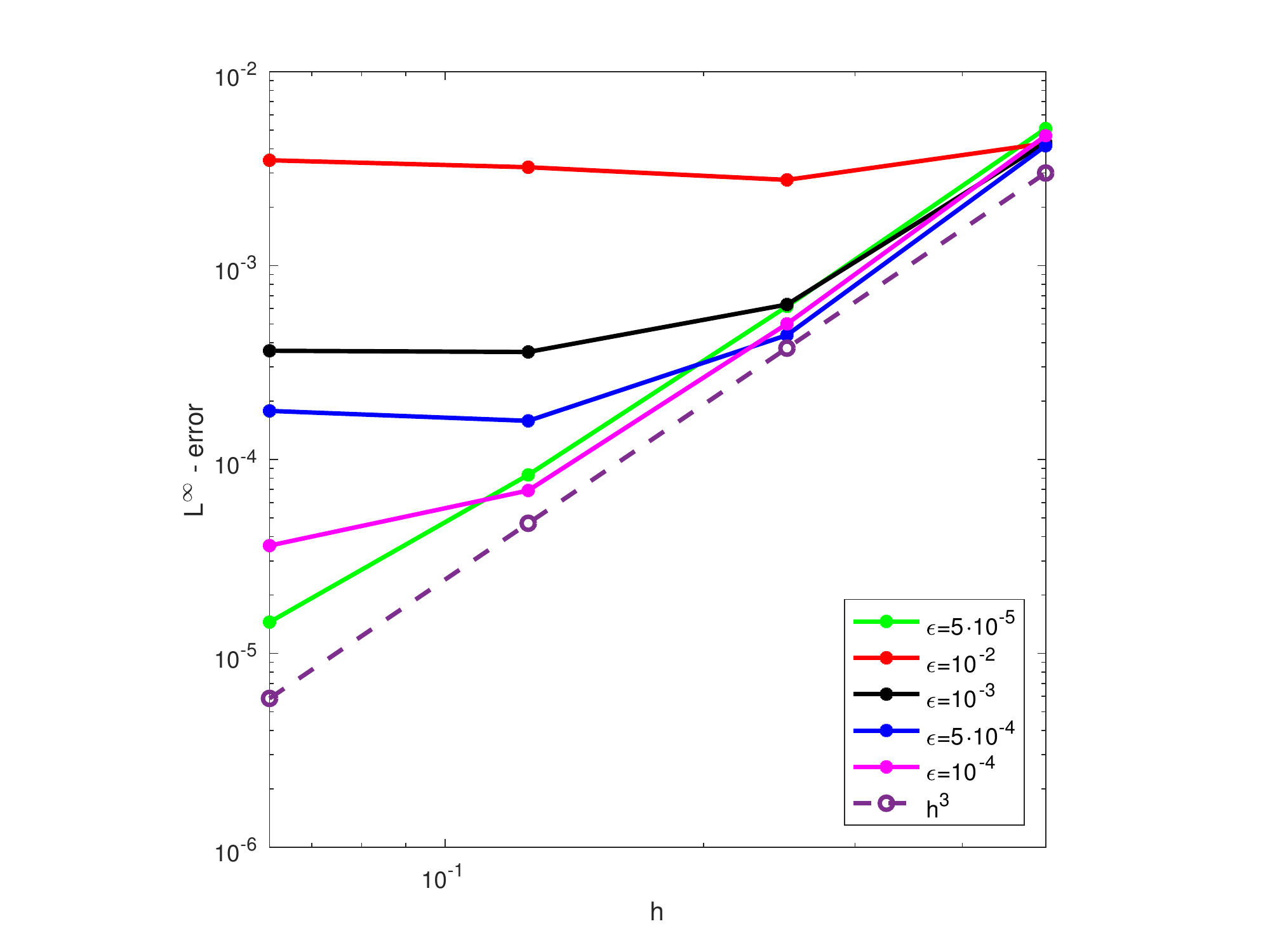}
\vspace{-5mm}
\caption{\footnotesize$L^\infty$-errors with SOI at $T=1$ with $I=[-1000,1000]$ for the exact solution \eqref{exactSolm} of \eqref{StefanProb} with $g\equiv 0$, $\varphi(\xi)=\xi^{0.6}$ and $\alpha=1.5$, approximating the nonlinearity by $\varphi^\epsilon(\xi)=(\xi+\epsilon)^{0.6}-\epsilon^{0.6}$.}
\label{eps_aprox}
\end{figure}

\subsubsection{Explicit scheme approximating the nonlinearity}
\label{explFD}
We consider \eqref{StefanProb} with $g\equiv0$,
$\varphi(\xi)=\xi^{m}$, $m=0.6$, and $\alpha=1.5$, and note that by 
\cite{Hua14} the solution is given by
\begin{equation}
\label{exactSolm}
v(x,t)= \lambda(t+1)^{-\beta} \left(1+(|x|(t+1)^{-\beta})^2\right)^{-\frac{\alpha+1}{2}}
\end{equation}
for $\beta=\frac{1}{m-1+\alpha}$ and $\lambda=\big(\frac{2^{\alpha-1}}{\beta}\frac{\Gamma((1+\alpha)/2)}{\Gamma((3-\alpha)/2)}\big)^{1/(1-m)}$.
We use the approach of Section \ref{sec:aproxNL} and implement
an explicit method
with the Second Order Interpolation + Adapted Vanishing Viscosity
(SOI) + approximate nonlinearity
$\varphi^\epsilon(\xi)=(\xi+\epsilon)^{0.6}-\epsilon^{0.6}$.
We run experiments for
 up to time $T=1$ with $I=[-1000,1000]$ for $\Delta t$'s satisfying
 \eqref{CFL} for $\varphi^\epsilon$. The results are shown in Figure \ref{eps_aprox} (above) and Table \ref{tab:eps_aprox} (below).

\begin{table}[!htbp]\footnotesize
\centering
\begin{tabular}{l|lll|lll|llll}
 $h$&   &     $\epsilon=5$e-$4$              &  & & $\epsilon=1$e-$4$ &                 & &    $\epsilon=5$e-$5$  &   \\ \cline{2-10} 
& \multicolumn{1}{l|}{$\Delta t$} & \multicolumn{1}{l|}{error} & \multicolumn{1}{l|}{$\gamma$} &  \multicolumn{1}{l|}{$ \Delta t$}& \multicolumn{1}{l|}{error}  & \multicolumn{1}{l|}{$\gamma$} & \multicolumn{1}{l|}{$\Delta t$} & \multicolumn{1}{l|}{error}& $\gamma$   \\ \hline
\multicolumn{1}{l|}{ $5.00$e-$1$}& \multicolumn{1}{l|}{$9.39 $e-$3 $}& \multicolumn{1}{l|}{$4.14 $e-$ 3$} &  \multicolumn{1}{l|}{} &\multicolumn{1}{l|}{$4.93 $e-$ 3$} &  \multicolumn{1}{l|}{$4.67 $e-$ 3$}  & \multicolumn{1}{l|}{} &  \multicolumn{1}{l|}{$1.97$e-$3$}  & \multicolumn{1}{l|}{$5.09 $e-$ 3$} &  \\

\multicolumn{1}{l|}{ $2.50$e-$1$}& \multicolumn{1}{l|}{$3.32 $e-$3 $}& \multicolumn{1}{l|}{$ 4.38$e-$ 4$} &\multicolumn{1}{l|}{$3.24$} & \multicolumn{1}{l|}{$ 1.74$e-$3 $} & \multicolumn{1}{l|}{$5.00 $e-$ 4$}&\multicolumn{1}{l|}{$3.22 $} & \multicolumn{1}{l|}{$6.16$e-$4$}  & \multicolumn{1}{l|}{$6.16 $e-$ 4$} &  3.06                  \\

\multicolumn{1}{l|}{$1.25$e-$1$}& \multicolumn{1}{l|}{$1.17 $e-$3 $}& \multicolumn{1}{l|}{$ 1.58$e-$ 4$} &\multicolumn{1}{l|}{$1.47$} &\multicolumn{1}{l|}{$6.17 $e-$ 4$} & \multicolumn{1}{l|}{$6.92 $e-$ 5$}& \multicolumn{1}{l|}{$2.85$} & \multicolumn{1}{l|}{$8.32$e-$5$}  &  \multicolumn{1}{l|}{$ 8.32$e-$ 5$} &       2.89     \\

\multicolumn{1}{l|}{$6.25$e-$2$}&\multicolumn{1}{l|}{$ 4.15$e-$ 4$}& \multicolumn{1}{l|}{$ 1.78$e-$ 4$}& \multicolumn{1}{l|}{$-0.17$} &  \multicolumn{1}{l|}{$2.18 $e-$ 4$}& \multicolumn{1}{l|}{$3.60$e-$5$} & $0.94$ & \multicolumn{1}{l|}{$1.44$e-$5$} &  \multicolumn{1}{l|}{$ 1.45$e-$ 5$  } &     $2.52$          
\end{tabular}
\vspace{2mm}
\caption{\footnotesize$L^\infty$-errors with SOI at $T=1$ with $I=[-1000,1000]$ for the exact solution \eqref{exactSolm} of \eqref{StefanProb} with $f\equiv 0$, $\varphi(\xi)=\xi^{0.6}$ and $\alpha=1.5$, approximating the nonlinearity by $\varphi^\epsilon(\xi)=(\xi+\epsilon)^{0.6}-\epsilon^{0.6}$. Note that we have only included the most accurate approximations here.}
\label{tab:eps_aprox}
\end{table}

\noindent{\bf Conclusion:} Here the expected error is
$O(h^{3-\alpha}+\Delta t)$ or $O(h^{1.5})$ when $\alpha=1.5$ and $\Delta t\sim h^{\alpha}$. Since we
are approximating the nonlinearity, the \eqref{CFL} condition becomes
more and more restrictive when $\epsilon$ is decreased. When we fix $\epsilon$
and let $h,\Delta t \to 0^+$, the error stops decreasing at some
point as can be seen in Figure
\ref{eps_aprox} and Table \ref{tab:eps_aprox}. However, very good
 results are obtained for small $\epsilon$.  Compared with
the implicit method of the previous section, 
the present method is better both in terms of computational times (we
are able to deal with much bigger domains) and accuracy (we reach
almost $10^{-6}$ with   
$h=6.25$e$-2$ instead of $h=3.13$e$-2$).

\subsection{On the truncation of the domain}
\label{sec:TruncationDomainEffect}

To test numerically the effect of the
restriction to a large bounded domain, we consider the
Fractional Heat Equation and the explicit solution of Section
\ref{sec:FractionalHeatEquationExplicit}. An explicit PDL
scheme is run up to $T=1$ on a sequence of increasing domains. We
take $\Delta t\sim h^2$ which satisfies \eqref{CFL} and ensures that
the space discretization errors dominate. See Figure \ref{diffdomains}
for the results. 
We also test the minimal error that can be reached for a fixed domain
for different values of $\alpha$. Here we consider the Fractional
Porous Medium Equation with $m=(3-\alpha)/(1+\alpha)$ and explicit
solutions from \cite{Hua14}. See Figure \ref{diffdomains2}.

\begin{figure}[!htbp]
\centering
\includegraphics[width=\textwidth]{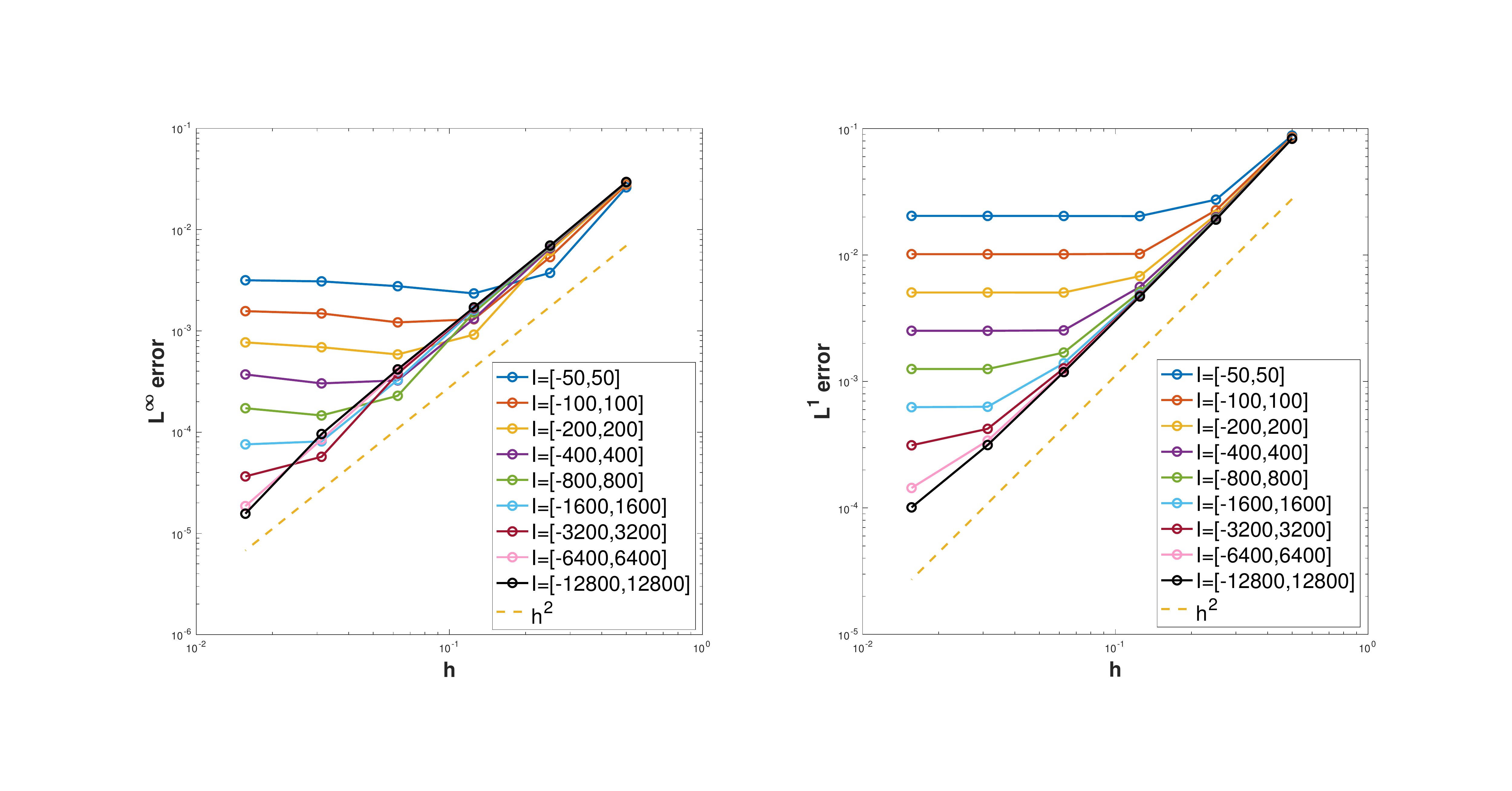}
\vspace{-12mm}
\caption{\footnotesize$L^\infty$- and $L^1$-errors with PDL at $T=1$ with different interval sizes $I$ for the exact solution \eqref{CauchyDistr} of \eqref{StefanProb} with $\alpha=1$, $\varphi(\xi)=\xi$, and $g\equiv 0$.}
\label{diffdomains}
\end{figure}

\medskip
\noindent{\bf Conclusion:} The expected
error for our schemes are $O(h^{2}+\Delta t)=O(h^{2})$ since $\Delta
t\sim h^{2}$. For each fixed domain $I$, we see from Figure
\ref{diffdomains} that the errors decrease as $h\to0^+$ down to some
threshold below which there is no improvement. At these thresholds, the
dominant error comes from the truncation of the domain. As expected, these
thresholds decrease as the size of the domain increases.
 Figure \ref{diffdomains2} shows that when you vary $\alpha$, the minimal
 reachable errors for a fixed domain $I$ is of the order of
 $\textup{length}(I)^{-\alpha}$. This could be an indication that the
 error due to the truncation of the domain is determined by the tail
 behaviour of the L\'evy measure -- here
 $\dd\mu(z)\sim |z|^{-1-\alpha}\dd z$. Such behaviour would be consistent with the analytical results of
 \cite{BrCh13} for tempered L\'evy models (which do not include the
 fractional Laplacian). Another analytical approach using Barenblatt solutions (cf. \cite{Vaz14}) can be found in \cite{DTe14}.

\begin{figure}[!htbp]
\centering
\includegraphics[width=0.75\textwidth]{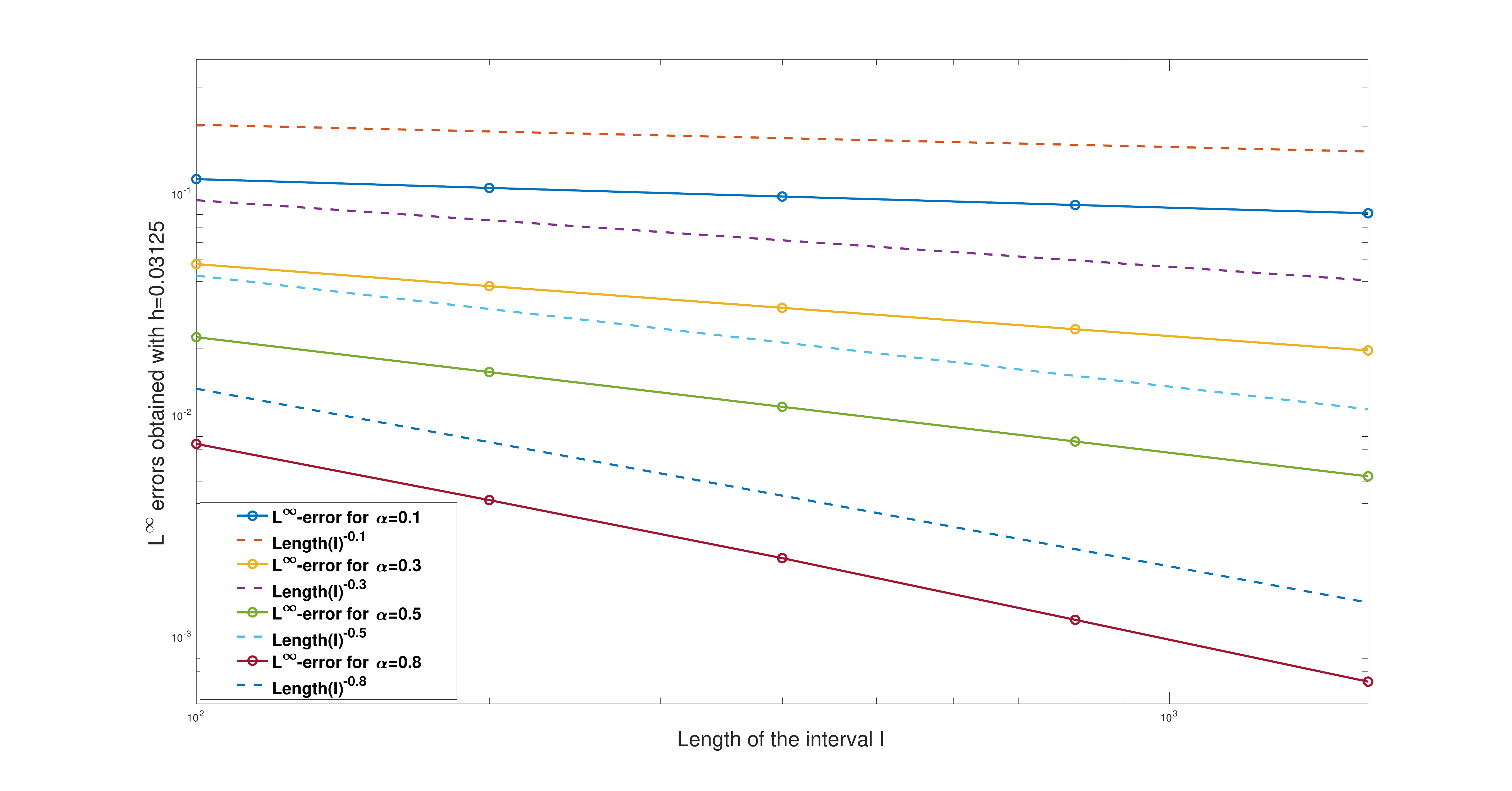}
\vspace{-5mm}
\caption{\footnotesize Minimal $L^\infty$- and $L^1$-errors with PDL at $T=1$ with different interval sizes $I$ for the exact solution of \eqref{StefanProb} with $\varphi(\xi)=\xi^{\frac{3-\alpha}{1+\alpha}}$, and $g\equiv 0$.}
\label{diffdomains2}
\end{figure}

%%%%%%%%%%%%%%%%%%%%%%%%%%%%%%%%%%%%%%%%%%%%%%%%%%%%
%%%%%%%%%%%%%%%%%%%%%NEW SECTION%%%%%%%%%%%%%%%%%%%%%%%
%%%%%%%%%%%%%%%%%%%%%%%%%%%%%%%%%%%%%%%%%%%%%%%%%%%%
\section{Numerical experiments in 2D}
\label{sec:numsim2D}

In this section we test our numerical schemes on more complicated
problems in two space dimensions which has a much richer solution
structure and a more interesting evolution. We consider Stefan problems of the type
\begin{equation}\label{2Dexample}
\dell_t u(x,y,t) +\Operator_i [\varphi(u(\cdot,\cdot,t))](x,y)=0\qquad\text{in}\qquad Q_T:=\R^2\times(0,T)
\end{equation}
for $i=1,2$, $\varphi(\xi)=\max\{0,\xi-1\}$, and both
$x$-directed nonlocal diffusion and ``diagonal'' $\vec{\sigma}^T=(\frac{1}{2}, \frac{47}{100})$-directed local diffusion,
\begin{align}\label{eq:2dexnonlocdom}
\Operator_1[\psi](x,y)&:=(\vec{\sigma}\cdot D)^2[\psi](x,y)+(-\partial_{xx})^{\frac{1}{4}}[\psi](x,y),\\
\label{eq:2dexlocdom}
\Operator_2[\psi](x,y)&:=(\vec{\sigma}\cdot D)^2[\psi](x,y)+\frac{1}{10}(-\partial_{xx})^{\frac{1}{4}}[\psi](x,y).
\end{align}
Note that nonlocal
diffusion is stronger in $\Operator_1$ than in $\Operator_2$.

As in Section \ref{sec:numsim}, we use an equidistant grid in space and
time and restrict to a (large) bounded spatial domain $I_1\times
I_2\subset\R^2$ (setting the numerical solution equal 
zero outside). The errors are computed numerically under the same
assumptions as before. We run an explicit scheme ($\theta=1$)
with  $\Delta t\sim h^2$ such that \eqref{CFL} holds and the overall order of
convergence is determined by the spatial discretization. Since
$x+\vec{\sigma}\eta$ is not aligned with the spatial grid $\Grid$, we
discretize the local term 
by \eqref{AproxLoc2} and $\eta=\sqrt{h}$ which leads to $O(h)$ errors
(cf. Lemma \ref{discretelocal}). For the  nonlocal
diffusion, we use the 1-dimensional version of Powers of the Discrete
Laplacian (PDL) of Section \ref{sec:pdl} which 
has $O(h^2)$ errors.  We also choose a rough initial data (see Figure
\ref{2DStefanCompNonlocal} and Figure \ref{2DStefanCompLocal}) given by
\[
u_0(x,y)= 3 (\indik_{S_1}(x,y) -  \indik_{S_2}(x,y))  + 4 \indik_{S_3}(x,y)
\]
where $S_1=\{(x,y)\in \R^2 \ : |x|<5\  \& \ |y|<5 \}$, $S_2=\{(x,y)\in
\R^2 \ : |x|<2\  \& \ |y|<2 \}$ and $S_3=\{(x,y)\in \R^2 \ : 3<|x|<4\
\& \ 3<|y|<4 \}$.
We run the experiments on the domain $I_1\times I_2=[-100,100]\times
[-10,10]$. The different sizes of the domain in the $x$- and
$y$-directions are adapted to the combination of compactly supported
data and degenerate operators \eqref{eq:2dexnonlocdom} and
\eqref{eq:2dexlocdom}. These operators are nonlocal in the
$x$-direction which requires a wide domain there. The size of the
domain in the $y$-direction can be smaller because the Stefan problem
 with local diffusion has finite speed of propagation
(see e.g. the introduction of \cite{BrChQu12}). 

In figure \ref{fig:fakefig} we list the relative $L^\infty$- and
$L^1$-errors (errors divided by the $L^\infty$- and $ L^1$-norms,
respectively, of the solution) for the numerical solution of \eqref{2Dexample} with $i=1$. In Figure \ref{2DStefanCompNonlocal} (resp. Figure \ref{2DStefanCompLocal}) we plot, for different times, the numerical solution of \eqref{2Dexample} with $i=1$ (resp. $i=2$).

\renewcommand\tabularxcolumn[1]{m{#1}}

\addvspace{\medskipamount}
\noindent
\begin{tabularx}{\linewidth}{ @{} X  X @{} }
\hspace{-3cm}\includegraphics[width=\textwidth]{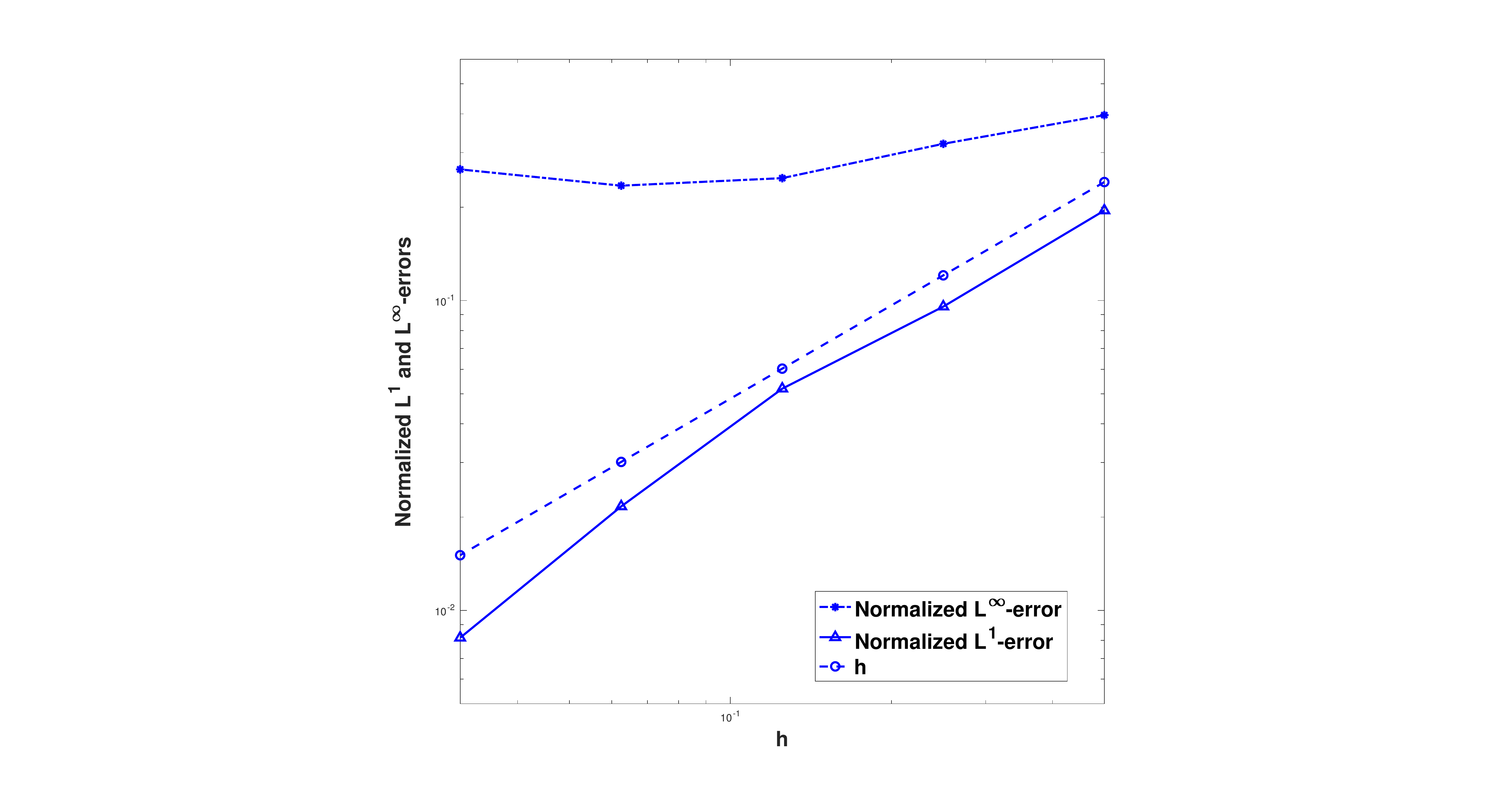}
 &

\begin{tabular}{l|llll}
 $h$ & \multicolumn{1}{l|}{$ L^\infty$}& \multicolumn{1}{l|}{$\gamma$}  & \multicolumn{1}{l|}{$ L^1$}& \multicolumn{1}{l}{$\gamma$}   \\ \hline
 $5.0$e-$1$& \multicolumn{1}{l|}{$ 3.9$e-$ 1$}& \multicolumn{1}{l|}{}  &\multicolumn{1}{l|}{$2 $e-$ 1$}&     \\
 $2.5$e-$1$& \multicolumn{1}{l|}{$3.2 $e-$ 1$}&\multicolumn{1}{l|}{0.3}  &\multicolumn{1}{l|}{$9.6 $e-$ 2$}&1        \\
$1.3$e-$1$& \multicolumn{1}{l|}{$ 2.5$e-$ 1$}& \multicolumn{1}{l|}{0.4} &\multicolumn{1}{l|}{$ 5.2$e-$ 2$}&0.9   \\
$6.3$e-$2$& \multicolumn{1}{l|}{$ 2.3$e-$ 1$}& \multicolumn{1}{l|}{0.1}  &\multicolumn{1}{l|}{$2.2 $e-$ 2$}&1.3            \\
$3.1$e-$2$& \multicolumn{1}{l|}{$2.6 $e-$ 1$}& \multicolumn{1}{l|}{-0.2}   &\multicolumn{1}{l|}{$ 8.2$e-$ 3$}  &1.4\\                   
\end{tabular}
\end{tabularx}
\vspace{-1cm}
\begin{figure}[h!]   
\caption{\footnotesize Relative $L^\infty$- and $L^1$-errors with \eqref{AproxLoc2} and PDL discretizations at $T=1$ with $I_1\times I_2=[-100,100]\times [-10,10]$ for \eqref{2Dexample} with $\varphi(\xi)=\max\{0,\xi-1\}$ and the diffusion operator given by \eqref{eq:2dexnonlocdom}. 
}\label{fig:fakefig}
\end{figure}

\begin{figure}[h!]   
\centering
\begin{subfigure}[b]{1.15\textwidth}
\hspace{-1cm}\includegraphics[width=\textwidth]{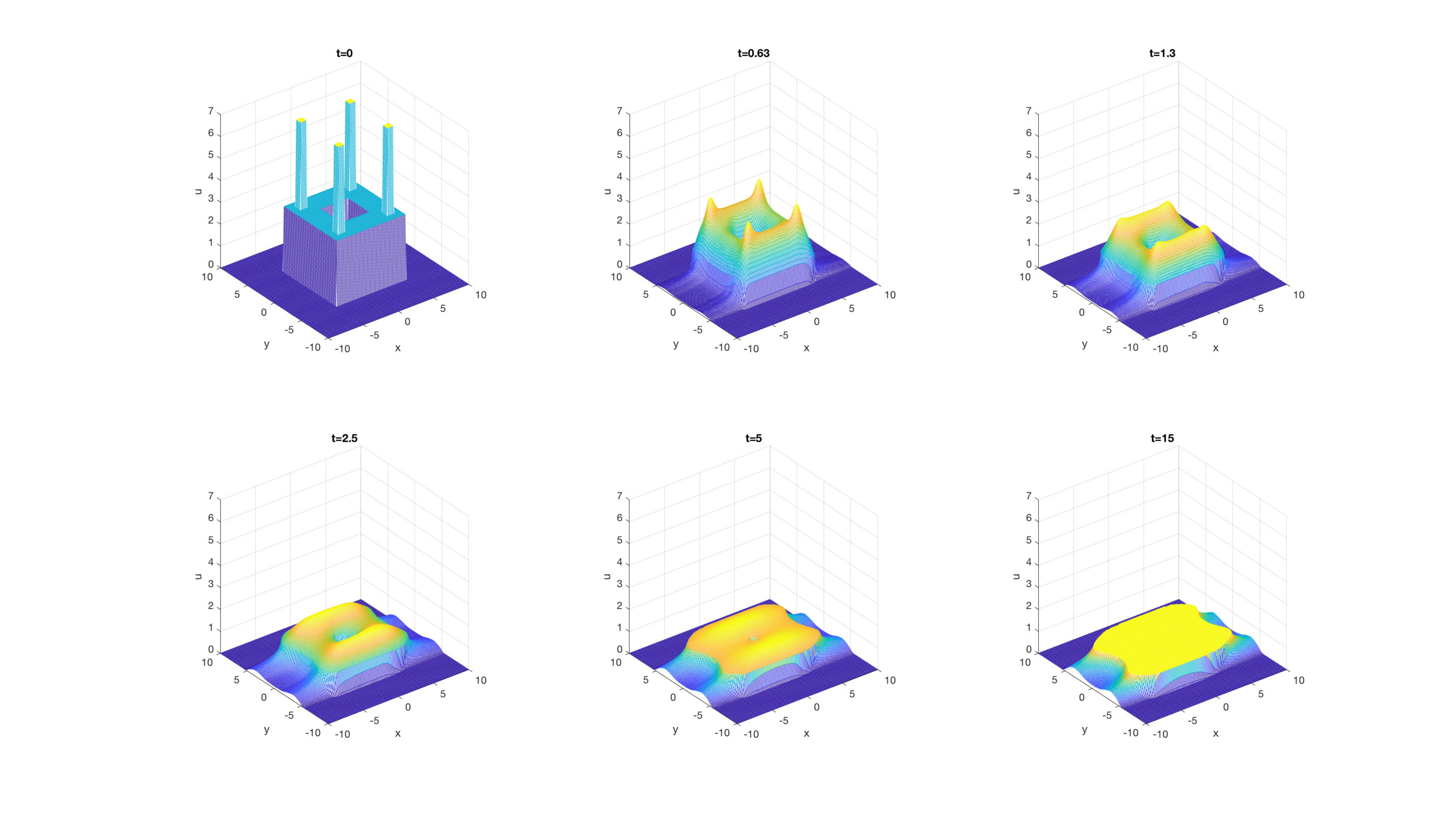}
\end{subfigure}
\begin{subfigure}[b]{1.15\textwidth}
\hspace{-1cm}\includegraphics[width=\textwidth]{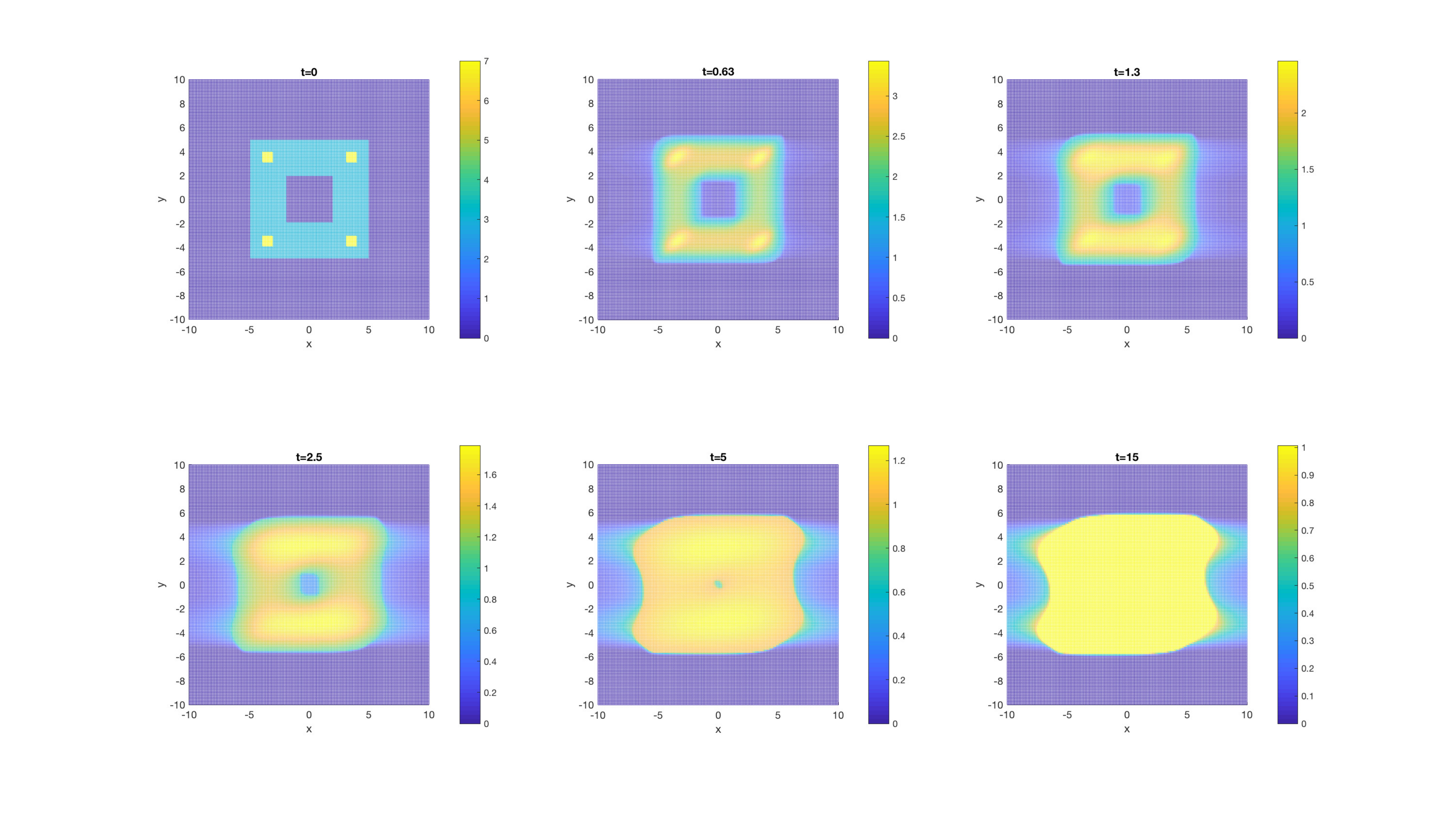}
\end{subfigure}
\vspace{-10mm}
\caption{\footnotesize Stronger nonlocal diffusion. Solution of \eqref{2Dexample} with $\varphi(\xi)=\max\{0,\xi-1\}$ and the diffusion operator given by \eqref{eq:2dexnonlocdom}.
}
\label{2DStefanCompNonlocal}
\end{figure}

\begin{figure}[h!]   
\centering
\begin{subfigure}[b]{1.15\textwidth}
\hspace{-1cm}\includegraphics[width=\textwidth]{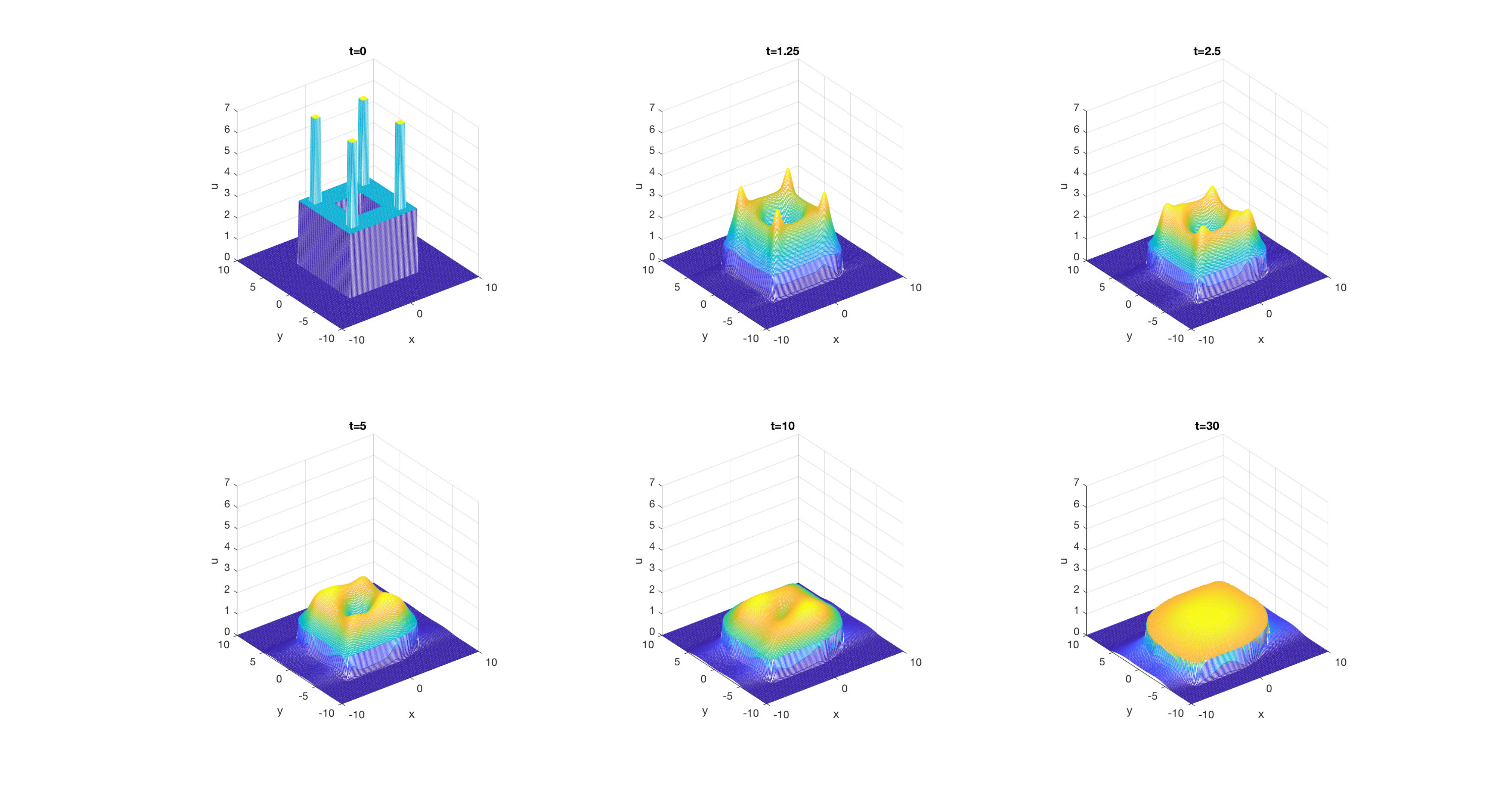}
\end{subfigure}
\begin{subfigure}[b]{1.15\textwidth}
\hspace{-1cm}\includegraphics[width=\textwidth]{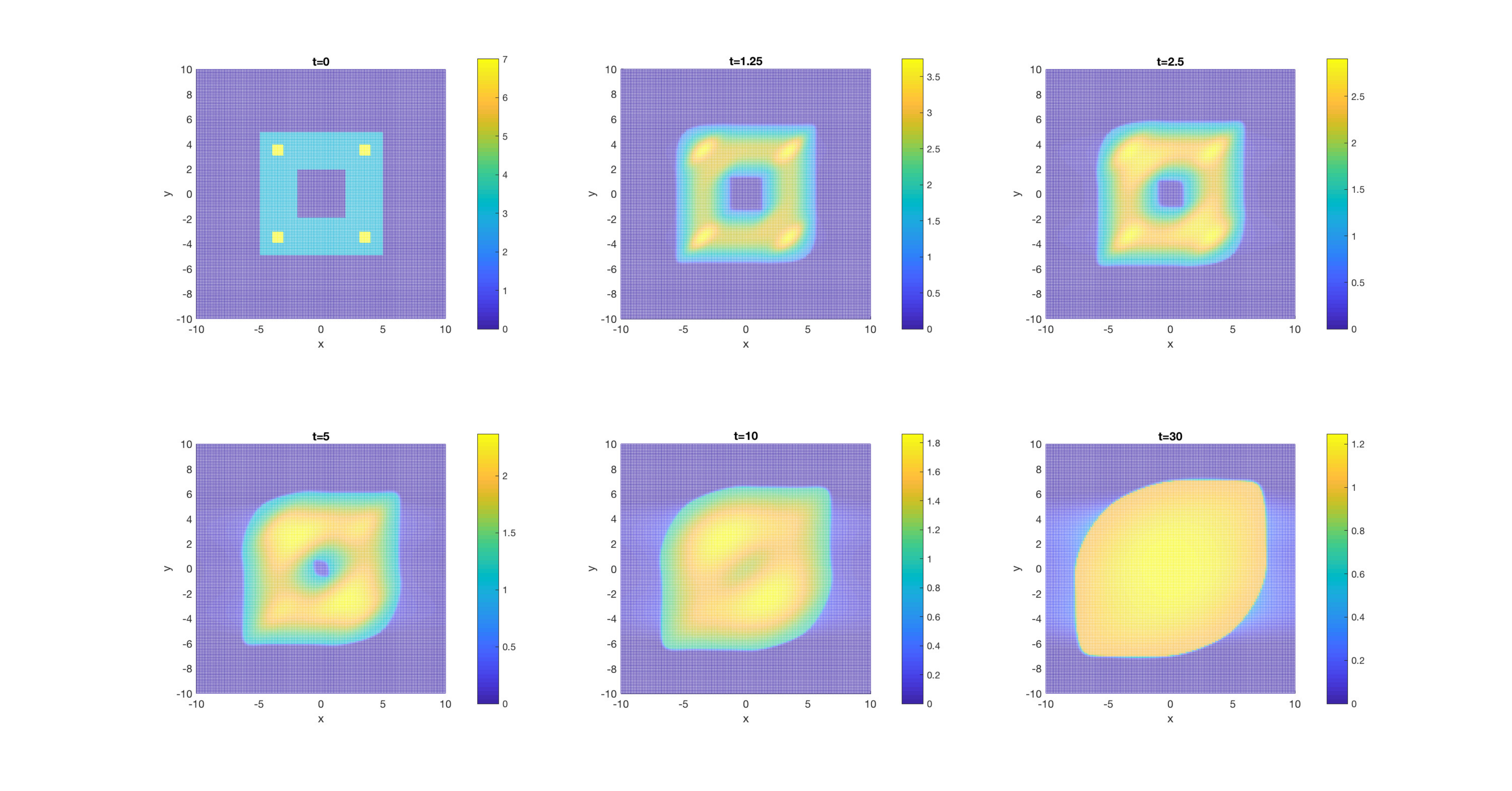}
\end{subfigure}
\vspace{-10mm}
\caption{\footnotesize Weak nonlocal diffusion. Solution of \eqref{2Dexample} with $\varphi(\xi)=\max\{0,\xi-1\}$ and the diffusion operator given by \eqref{eq:2dexlocdom}.}
\label{2DStefanCompLocal}
\end{figure}

\smallskip
\noindent{\bf Conclusion:} Figure \ref{fig:fakefig} confirms the
$O(h)$ convergence in $L^1$ predicted by Lemma
\ref{discretelocal}. As in Section
\ref{sec:StefanTyepProblemExplicit}, there is no  
convergence in $L^\infty$. In Figure \ref{2DStefanCompNonlocal} there
are pronounced long ``bands'' due to the nonlocal diffusion, and the
effect of the almost diagonal local diffusion is also visible. In Figure
\ref{2DStefanCompLocal} the nonlocal diffusion is 
weaker and the local diffusion dominates. The ``long bands'' are not
very pronounced. Both figures exhibit regions of discontinuity of the
solution in the $y$-direction (where there is only local
diffusion). Such behaviour is well-known for local Stefan problems.

\section*{Acknowledgements}

All authors were supported by the Toppforsk (research excellence)
project Waves and Nonlinear Phenomena (WaNP), grant no. 250070 from
the Research Council of Norway. F.~del Teso was also supported by the
BERC 2018-2021 program from the Basque Government, BCAM Severo Ochoa
excellence accreditation SEV-2017-0718 from Spanish Ministry of
Economy and Competitiveness (MINECO), the ERCIM  ``Alain
Bensoussan'' Fellowship programme and the ``Juan de la Cierva - formaci\'on" program (FJCI-2016-30148).  We would like to
thank the anonymous referees for helping us improve the paper and
Indranil Chowdhury for a careful reading and useful comments.


\begin{thebibliography}{10}

\bibitem{AlAn10}
N.~Alibaud and B.~Andreianov.
\newblock Non-uniqueness of weak solutions for the fractal {B}urgers equation.
\newblock {\em Ann. Inst. H. Poincar\'e Anal. Non Lin\'eaire}, 27(4):997--1016,
  2010.

\bibitem{AlCiJa14}
N.~Alibaud, S.~Cifani, and E.~R. Jakobsen.
\newblock Optimal continuous dependence estimates for fractional degenerate
  parabolic equations.
\newblock {\em Arch. Ration. Mech. Anal.}, 213(3):705--762, 2014.

\bibitem{A-VMaRoT-M10}
F.~Andreu-Vaillo, J.~M. Maz{\'o}n, J.~D. Rossi, and J.~J. Toledo-Melero.
\newblock {\em Nonlocal diffusion problems}, volume 165 of {\em Mathematical
  Surveys and Monographs}.
\newblock American Mathematical Society, Providence, RI; Real Sociedad
  Matem\'atica Espa\~nola, Madrid, 2010.

\bibitem{App09}
D.~Applebaum.
\newblock {\em L\'evy processes and stochastic calculus}, volume 116 of {\em
  Cambridge Studies in Advanced Mathematics}.
\newblock Cambridge University Press, Cambridge, second edition, 2009.

\bibitem{AsRo01}
S.~Asmussen  and J.~Rosi\'nski.
\newblock Approximations of small jumps of {L}\'evy processes with a view
  towards simulation.
\newblock {\em J. Appl. Probab.}, 38(2):482--493, 2001.

\bibitem{BePa61}
A.~Benedek and R.~Panzone.
\newblock The spaces {$L^{p}$}, with mixed norm.
\newblock {\em Duke Math. J.}, 28:301--324, 1961.

\bibitem{Ber96}
J.~Bertoin.
\newblock {\em L\'evy processes}, volume 121 of {\em Cambridge Tracts in
  Mathematics}.
\newblock Cambridge University Press, Cambridge, 1996.

\bibitem{BiJaKa10}
I.~H. Biswas, E.~R. Jakobsen, and K.~H. Karlsen.
\newblock Difference-quadrature schemes for nonlinear degenerate parabolic
  integro-{PDE}.
\newblock {\em SIAM J. Numer. Anal.}, 48(3):1110--1135, 2010.


\bibitem{BrCh13}
C.~Br\"andle and E.~Chasseigne.
\newblock Large deviation estimates for some nonlocal
equations. General bounds and applications. 
\newblock {\em Trans. Amer. Math. Soc.}, 365(7):3437--3476, 2013.
 


\bibitem{BrChQu12}
C.~Br\"andle, E.~Chasseigne, and F.~Quir\'os.
\newblock Phase transitions with midrange interactions: a nonlocal {S}tefan
  model.
\newblock {\em SIAM J. Math. Anal.}, 44(4):3071--3100, 2012.

\bibitem{BrCr79}
H.~Br{\'e}zis and M.~G. Crandall.
\newblock Uniqueness of solutions of the initial-value problem for
  {$u_{t}-\Delta \varphi (u)=0$}.
\newblock {\em J. Math. Pures Appl. (9)}, 58(2):153--163, 1979.

\bibitem{BuVa16}
C.~Bucur and E.~Valdinoci.
\newblock {\em Nonlocal diffusion and applications}, volume~20 of {\em Lecture
  Notes of the Unione Matematica Italiana}.
\newblock Springer, [Cham]; Unione Matematica Italiana, Bologna, 2016.

\bibitem{BuCoSe06}
R.~B\"urger, A.~Coronel, and M.~Sep\'ulveda.
\newblock A semi-implicit monotone difference scheme for an initial-boundary
  value problem of a strongly degenerate parabolic equation modeling
  sedimentation-consolidation processes.
\newblock {\em Math. Comp.}, 75(253):91--112, 2006.

\bibitem{Caf12}
L.~Caffarelli.
\newblock Non-local diffusions, drifts and games.
\newblock In {\em Nonlinear partial differential equations}, volume~7 of {\em
  Abel Symp.}, pages 37--52. Springer, Heidelberg, 2012.

\bibitem{CaSi07}
L.~Caffarelli and L.~Silvestre.
\newblock An extension problem related to the fractional {L}aplacian.
\newblock {\em Comm. Partial Differential Equations}, 32(7-9):1245--1260, 2007.

\bibitem{CaFa95}
F.~Camilli and M.~Falcone.
\newblock An approximation scheme for the optimal control of diffusion
  processes.
\newblock {\em RAIRO Mod\'el. Math. Anal. Num\'er.}, 29(1):97--122, 1995.

\bibitem{CaJa09}
F.~Camilli and E.~R. Jakobsen.
\newblock A finite element like scheme for integro-partial differential
  {H}amilton-{J}acobi-{B}ellman equations.
\newblock {\em SIAM J. Numer. Anal.}, 47(4):2407--2431, 2009.

\bibitem{ChCzSi10}
C.~H. Chan, M.~Czubak, and L.~Silvestre.
\newblock Eventual regularization of the slightly supercritical fractional
  {B}urgers equation.
\newblock {\em Discrete Contin. Dyn. Syst.}, 27(2):847--861, 2010.

\bibitem{CiRa72}
P.~G. Ciarlet and P.-A. Raviart.
\newblock General {L}agrange and {H}ermite interpolation in {${\bf R}^{n}$}
  with applications to finite element methods.
\newblock {\em Arch. Rational Mech. Anal.}, 46:177--199, 1972.

\bibitem{CiRoStToVa15}
O.~Ciaurri, L.~Roncal,  P.~R. Stinga, J.~L. Torrea, and J.~L. Varona.
\newblock Fractional discrete laplacian versus discretized fractional
  laplacian.
\newblock Preprint, arXiv:1507.04986v1 [math.AP], 2015.

\bibitem{CiRoStToVa16}
O.~Ciaurri, L.~Roncal,  P.~R. Stinga, J.~L. Torrea, and J.~L. Varona.
\newblock Nonlocal discrete diffusion equations and the fractional discrete
  {L}aplacian, regularity and applications.
\newblock {\em Adv. Math.}, 330:688--738, 2018.

\bibitem{CiJa11}
S.~Cifani and E.~R. Jakobsen.
\newblock Entropy solution theory for fractional degenerate
  convection-diffusion equations.
\newblock {\em Ann. Inst. H. Poincar\'e Anal. Non Lin\'eaire}, 28(3):413--441,
  2011.

\bibitem{CiJa13}
S.~Cifani and E.~R. Jakobsen.
\newblock On the spectral vanishing viscosity method for periodic fractional
  conservation laws.
\newblock {\em Math. Comp.}, 82(283):1489--1514, 2013.

\bibitem{CiJa14}
S.~Cifani and E.~R. Jakobsen.
\newblock On numerical methods and error estimates for degenerate fractional
  convection-diffusion equations.
\newblock {\em Numer. Math.}, 127(3):447--483, 2014.

\bibitem{CiJaKa11}
S.~Cifani, E.~R. Jakobsen, and K.~H. Karlsen.
\newblock The discontinuous {G}alerkin method for fractional degenerate
  convection-diffusion equations.
\newblock {\em BIT}, 51(4):809--844, 2011.

\bibitem{CoTa04}
R.~Cont and P.~Tankov.
\newblock {\em Financial modelling with jump processes}.
\newblock Chapman \& Hall/CRC Financial Mathematics Series. Chapman \&
  Hall/CRC, Boca Raton, FL, 2004.

\bibitem{CuDTG-GPa17b}
N.~Cusimano, F.~del Teso, L.~Gerardo-Giorda, and G.~Pagnini.
\newblock Monotone discretizations of the fractional {L}aplacian in
  $\mathbb{R}^n$ trough the heat semigroup formula.
\newblock In preparation, 2018.

\bibitem{CuDTG-GPa17a}
N.~Cusimano, F.~del Teso, L.~Gerardo-Giorda, and G.~Pagnini.
\newblock Discretizations of the spectral fractional {L}aplacian on general
  domains with {D}irichlet, {N}eumann, and {R}obin boundary conditions.
\newblock {\em SIAM J. Numer. Anal.}, 56(3):1243--1272, 2018.

\bibitem{DPQuRo16}
A.~de~Pablo, F.~Quir{\'o}s, and A.~Rodr{\'{\i}}guez.
\newblock Nonlocal filtration equations with rough kernels.
\newblock {\em Nonlinear Anal.}, 137:402--425, 2016.

\bibitem{DPQuRoVa11}
A.~de~Pablo, F.~Quir{\'o}s, A.~Rodr{\'{\i}}guez, and J.~L. V{\'a}zquez.
\newblock A fractional porous medium equation.
\newblock {\em Adv. Math.}, 226(2):1378--1409, 2011.

\bibitem{DPQuRoVa12}
A.~de~Pablo, F.~Quir{\'o}s, A.~Rodr{\'{\i}}guez, and J.~L. V{\'a}zquez.
\newblock A general fractional porous medium equation.
\newblock {\em Comm. Pure Appl. Math.}, 65(9):1242--1284, 2012.

\bibitem{DeJa13a}
K.~Debrabant and E.~R. Jakobsen.
\newblock Semi-{L}agrangian schemes for parabolic equations.
\newblock In {\em Recent developments in computational finance}, volume~14 of
  {\em Interdiscip. Math. Sci.}, pages 279--297. World Sci. Publ., Hackensack,
  NJ, 2013.

\bibitem{DTe14}
F.~del Teso.
\newblock Finite difference method for a fractional porous medium equation.
\newblock {\em Calcolo}, 51(4):615--638, 2014.

\bibitem{DTEnJa17c}
F.~del Teso, J.~Endal, and E.~R. Jakobsen.
\newblock On distributional solutions of local and nonlocal problems of porous
  medium type.
\newblock {\em C. R. Math. Acad. Sci. Paris}, 355(11):1154--1160, 2017.

\bibitem{DTEnJa17a}
F.~del Teso, J.~Endal, and E.~R. Jakobsen.
\newblock Uniqueness and properties of distributional solutions of nonlocal
  equations of porous medium type.
\newblock {\em Adv. Math.}, 305:78--143, 2017.

\bibitem{DTEnJa18a}
F.~del Teso, J.~Endal, and E.~R. Jakobsen.
\newblock Robust numerical methods for nonlocal (and local) equations of porous medium
  type. {P}art {I}: Theory.
\newblock Submitted, preprint available at arXiv, arXiv:1801.07148v1 [math.NA],
  2018.

\bibitem{DTVa14}
F.~del Teso and J.~L. V\'azquez.
\newblock Finite difference method for a general fractional porous medium
  equation.
\newblock Preprint, arXiv:1307.2474v1 [math.NA], 2014.

\bibitem{DBHo84}
E.~DiBenedetto and D.~Hoff.
\newblock An interface tracking algorithm for the porous medium equation.
\newblock {\em Trans. Amer. Math. Soc.}, 284(2):463--500, 1984.

\bibitem{Dro10}
J.~Droniou.
\newblock A numerical method for fractal conservation laws.
\newblock {\em Math. Comp.}, 79(269):95--124, 2010.

\bibitem{DrEyGaHe13}
J.~Droniou, R.~Eymard, T.~Gallouet, and R.~Herbin.
\newblock Gradient schemes: a generic framework for the discretisation of
  linear, nonlinear and nonlocal elliptic and parabolic equations.
\newblock {\em Math. Models Methods Appl. Sci.}, 23(13):2395--2432, 2013.

\bibitem{DrEyHe16}
J.~Droniou, R.~Eymard, and R.~Herbin.
\newblock Gradient schemes: generic tools for the numerical analysis of
  diffusion equations.
\newblock {\em ESAIM Math. Model. Numer. Anal.}, 50(3):749--781, 2016.

\bibitem{DrIm06}
J.~Droniou and C.~Imbert.
\newblock Fractal first-order partial differential equations.
\newblock {\em Arch. Ration. Mech. Anal.}, 182(2):299--331, 2006.

\bibitem{DrJa14}
J.~Droniou and E.~R. Jakobsen.
\newblock A uniformly converging scheme for fractal conservation laws.
\newblock In {\em Finite volumes for complex applications {VII}. {M}ethods and
  theoretical aspects}, volume~77 of {\em Springer Proc. Math. Stat.}, pages
  237--245. Springer, Cham, 2014.

\bibitem{DuAlAn13}
J.~C.~M. Duque, R.~M.~P. Almeida, and S.~N. Antontsev.
\newblock Convergence of the finite element method for the porous media
  equation with variable exponent.
\newblock {\em SIAM J. Numer. Anal.}, 51(6):3483--3504, 2013.

\bibitem{EbLi08}
C.~Ebmeyer and W.~B. Liu.
\newblock Finite element approximation of the fast diffusion and the porous
  medium equations.
\newblock {\em SIAM J. Numer. Anal.}, 46(5):2393--2410, 2008.

\bibitem{EmSi12}
E.~Emmrich and D.~\v{S}i\v{s}ka.
\newblock Full discretization of the porous medium/fast diffusion equation
  based on its very weak formulation.
\newblock {\em Commun. Math. Sci.}, 10(4):1055--1080, 2012.

\bibitem{EvKa00}
S.~Evje and K.~H. Karlsen.
\newblock Monotone difference approximations of {BV} solutions to degenerate
  convection-diffusion equations.
\newblock {\em SIAM J. Numer. Anal.}, 37(6):1838--1860, 2000.

\bibitem{EyGaHe00}
R.~Eymard, T.~Gallou{\"e}t, and R.~Herbin.
\newblock Finite volume methods.
\newblock In {\em Handbook of numerical analysis, {V}ol. {VII}}, Handb. Numer.
  Anal., VII, pages 713--1020. North-Holland, Amsterdam, 2000.

\bibitem{Hua14}
Y.~Huang.
\newblock Explicit {B}arenblatt profiles for fractional porous medium
  equations.
\newblock {\em Bull. Lond. Math. Soc.}, 46(4):857--869, 2014.

\bibitem{HuOb14}
Y.~Huang and A.~Oberman.
\newblock Numerical methods for the fractional {L}aplacian: a finite
  difference--quadrature approach.
\newblock {\em SIAM J. Numer. Anal.}, 52(6):3056--3084, 2014.

\bibitem{HuOb16}
Y.~Huang and A.~Oberman.
\newblock Finite difference methods for fractional {L}aplacians.
\newblock Preprint, arXiv:1611.00164v1 [math.NA], 2016.

\bibitem{IgSt18}
L.~Ignat and D.~Stan.
\newblock Asymptotic behaviour of solutions to fractional diffusion-convection
  equations.
\newblock {\em J. Lond. Math. Soc. (2)}, 97(2):258--281, 2018.

\bibitem{JaKaLC08}
E.~R. Jakobsen, K.~H. Karlsen, and C.~La~Chioma.
\newblock Error estimates for approximate solutions to {B}ellman equations
  associated with controlled jump-diffusions.
\newblock {\em Numer. Math.}, 110(2):221--255, 2008.

\bibitem{JePa17}
S.~Jerez and C.~Par\'es.
\newblock Entropy stable schemes for degenerate convection-diffusion equations.
\newblock {\em SIAM J. Numer. Anal.}, 55(1):240--264, 2017.

\bibitem{KaRiSt16}
K.~H. Karlsen, N.~H. Risebro, and E.~B. Storr{\o}sten.
\newblock On the convergence rate of finite difference methods for degenerate
  convection-diffusion equations in several space dimensions.
\newblock {\em ESAIM Math. Model. Numer. Anal.}, 50(2):499--539, 2016.

\bibitem{LiRo18}
C.~Lizama and L.~Roncal.
\newblock H\"{o}lder-{L}ebesgue regularity and almost periodicity for
  semidiscrete equations with a fractional {L}aplacian.
\newblock {\em Discrete Continuous Dyn. Syst.}, 38(3):1365--1403, 2018.

\bibitem{MeKl04}
R.~Metzler and J.~Klafter.
\newblock The restaurant at the end of the random walk: recent developments in
  the description of anomalous transport by fractional dynamics.
\newblock {\em J. Phys. A}, 37(31):R161--R208, 2004.

\bibitem{MiSaSu05}
A.~Mizutani, N.~Saito, and T.~Suzuki.
\newblock Finite element approximation for degenerate parabolic equations. {A}n
  application of nonlinear semigroup theory.
\newblock {\em M2AN Math. Model. Numer. Anal.}, 39(4):755--780, 2005.

\bibitem{Mon16}
L.~Monsaingeon.
\newblock An explicit finite-difference scheme for one-dimensional generalized
  porous medium equations: interface tracking and the hole filling problem.
\newblock {\em ESAIM Math. Model. Numer. Anal.}, 50(4):1011--1033, 2016.

\bibitem{NoOtSa15}
R.~H. Nochetto, E.~Ot{\'a}rola, and A.~J. Salgado.
\newblock A {PDE} approach to fractional diffusion in general domains: a priori
  error analysis.
\newblock {\em Found. Comput. Math.}, 15(3):733--791, 2015.

\bibitem{NoVe88}
R.~H. Nochetto and C.~Verdi.
\newblock Approximation of degenerate parabolic problems using numerical
  integration.
\newblock {\em SIAM J. Numer. Anal.}, 25(4):784--814, 1988.

\bibitem{Ros83}
M.~E. Rose.
\newblock Numerical methods for flows through porous media. {I}.
\newblock {\em Math. Comp.}, 40(162):435--467, 1983.

\bibitem{RuWa96}
J.~Rulla and N.~J. Walkington.
\newblock Optimal rates of convergence for degenerate parabolic problems in two
  dimensions.
\newblock {\em SIAM J. Numer. Anal.}, 33(1):56--67, 1996.

\bibitem{Sch03}
W.~Schoutens.
\newblock {\em L\'evy Processes in Finance: Pricing Financial Derivatives}.
\newblock Wiley series in probability and statistics. Wiley, Chichester, first
  edition, 2003.

\bibitem{Val09}
E.~Valdinoci.
\newblock From the long jump random walk to the fractional {L}aplacian.
\newblock {\em Bol. Soc. Esp. Mat. Apl. S$\vec{\rm e}$MA}, 49:33--44, 2009.

\bibitem{Vaz07}
J.~L. V{\'a}zquez.
\newblock {\em The porous medium equation. {M}athematical theory}.
\newblock Oxford Mathematical Monographs. The Clarendon Press, Oxford
  University Press, Oxford, 2007.

\bibitem{Vaz12}
J.~L. V\'azquez.
\newblock Nonlinear diffusion with fractional {L}aplacian operators.
\newblock In {\em Nonlinear partial differential equations}, volume~7 of {\em
  Abel Symp.}, pages 271--298. Springer, Heidelberg, 2012.

\bibitem{Vaz14}
J.~L. V{\'a}zquez.
\newblock Barenblatt solutions and asymptotic behaviour for a nonlinear
  fractional heat equation of porous medium type.
\newblock {\em J. Eur. Math. Soc. (JEMS)}, 16(4):769--803, 2014.

\bibitem{DPQuRoVa17}
J.~L. V\'azquez, A.~de~Pablo, F.~Quir\'os, and A.~Rodr{\'\i}guez.
\newblock Classical solutions and higher regularity for nonlinear fractional
  diffusion equations.
\newblock {\em J. Eur. Math. Soc. (JEMS)}, 19(7):1949--1975, 2017.

\bibitem{Woy01}
W.~A. Woyczy{\'n}ski.
\newblock L\'evy processes in the physical sciences.
\newblock In {\em L\'evy processes}, pages 241--266. Birkh\"auser Boston,
  Boston, MA, 2001.

\bibitem{XuHe14}
Q.~Xu and J.~S. Hesthaven.
\newblock Discontinuous {G}alerkin method for fractional convection-diffusion
  equations.
\newblock {\em SIAM J. Numer. Anal.}, 52(1):405--423, 2014.

\bibitem{ZhWu09}
Q.~Zhang and Z.-L. Wu.
\newblock Numerical simulation for porous medium equation by local
  discontinuous {G}alerkin finite element method.
\newblock {\em J. Sci. Comput.}, 38(2):127--148, 2009.

\end{thebibliography}
\end{document}